\newcommand{\ifemptythenelse}[3]{%
  \begingroup
    \def\dummy{#1}%
    \def\empty{}%
    \ifx\dummy\empty{#2}\else{#3}\fi
  \endgroup
  }
\newtheorem{thm}{Theorem}[section]
\newtheorem{prop}[thm]{Proposition}
\newtheorem{lem}[thm]{Lemma}
\newtheorem{cor}[thm]{Corollary}
\newtheorem{defn}[thm]{Definition}
\newtheorem{rmk}[thm]{Remark}
\begin{document}

\vspace*{2cm}

\subjclass{Primary 20E26, 20F65. Secondary 20F67, 20F38}

\title[\large On subgroup conjugacy separability]{\large On subgroup conjugacy separability of hyperbolic $\mathcal{Q}\mathcal{V}\mathcal{H}$-groups}

\author{Oleg Bogopolski}
\address{{Sobolev Institute of Mathematics of Siberian Branch of Russian Academy
of Sciences, Novosibirsk, Russia}\newline {and D\"{u}sseldorf University, Germany}}
\email{Oleg$\_$Bogopolski@yahoo.com}

\author{Kai-Uwe Bux}
\address{Bielefeld University, Germany}
\email{bux$\_$2009@kubux.net}


\thanks{$\dagger$ 
This research was partially supported by
SFB 701, ``Spectral Structures and Topological Methods in Mathematics'', at Bielefeld University.}


\begin{abstract}
A group $G$ is called subgroup conjugacy separable (abbreviated as SCS) if
any two finitely generated and non-conjugate subgroups of
$G$ remain non-conjugate in some finite quotient of $G$. An into-conjugacy version of SCS
is abbreviated by SICS.
We prove that if $G$ is a hyperbolic group, $H_1$ is a quasiconvex subgroup of $G$, and $H_2$
is a subgroup of $G$ which is elementwise conjugate into $H_1$, then
there exists a finite index subgroup of $H_2$ which is conjugate into~$H_1$.
As corollary, we deduce that fundamental groups of closed hyperbolic 3-manifolds and  torsion-free small cancellation groups with finite $C'(1/6)$ or $C'(1/4)-T(4)$ presentations are hereditarily quasiconvex-SCS and hereditarily quasiconvex-SICS, and that surface groups are SCS and SICS. We also show that the word ``quasiconvex''
cannot be deleted for at least small cancellation groups.

\end{abstract}

\maketitle

\section{Introduction}

A group $G$ is called {\it subgroup conjugacy separable} (SCS) if the following
condition holds:

\medskip
For any two finitely generated subgroups $H_1$ and $H_2$ that are not conjugate in
$G$, there is a homomorphism of $G$ onto a finite quotient $\overline{G}$ such that the images of $H_1$ and $H_2$
are not conjugate in $\overline{G}$.

\medskip

This property logically continues the following series of well
known properties of groups: residual finiteness, conjugacy separability (CS), and subgroup separability (LERF).
Note that SCS-groups are residually finite, but there are residually finite, and even conjugacy separable groups,
which are not SCS-groups. The SCS-property is relatively new and not much is known about, which groups enjoy this property.

In~\cite{GS}, Grunewald and Segal proved that all virtually polycyclic groups are SCS (see also Theorem~7 in Chapter~4 of~\cite{Segal}). In the preprint~\cite{BG}, Bogopolski and Grune\-wald proved that free groups and some virtually free groups are SCS.
In the preprint~\cite{BogBux}, Bogopolski and Bux proved that
the fundamental groups of closed orientable surfaces are SCS.
Chagas and Zalesskii~\cite{CZ3} generalized this to limit groups.
To formulate our results, we need the following definitions:

\begin{enumerate}
\item[(1)] For two subgroups $A$ and $B$ of a group $C$, we say that $A$ is {\it conjugate into}~$B$ if
$A^c\leqslant B$ for some element $c\in C$. Here $A^c=c^{-1}Ac$.

A group $G$ is called {\it subgroup into conjugacy separable} (SICS) if the following
condition holds:

\noindent
For any two finitely generated subgroups $H_1$ and $H_2$ such that $H_2$ is not conjugate into $H_1$ in $G$, there is a homomorphism from $G$ onto a finite quotient $\overline{G}$ such that the image of $H_2$ is not conjugate into the image of $H_1$ in $\overline{G}$.

\medskip

\item[(2)] The class of groups possessing {\it local retractions} is defined in Definition~\ref{LR}.
This class was introduced by Long and Ride~\cite[Introduction]{LR} and is denoted by ${\rm LR}$.
Results of Hall~\cite{Hall} and Scott~\cite{Scott} imply that free groups and closed surface groups lie in the class ${\rm LR}$. Wilton~\cite[Theorem B]{W} proved that limit groups lie in the class ${\rm LR}$.

\medskip

\item[(3)] For hyperbolic groups the notion `a quasiconvex subgroup' is well defined, i.e., it does not depend on the choice of generating sets. It is well known that quasiconvex subgroups of hyperbolic groups are finitely generated. We say that a hyperbolic group $G$ is {\it quasiconvex}-SICS ({\it quasiconvex}-SCS) if it satisfies the above definitions of SICS (SCS) with replacing the words finitely generated by quasiconvex.

\medskip

\item[(4)] Let ${\mathcal P}$ be a property of a group. We say that $G$ is {\it hereditarily} ${\mathcal P}$ if all finite index subgroups of $G$ have the property ${\mathcal P}$.

\medskip

\end{enumerate}


Our main theorem is the following:

\medskip

{\bf Theorem A.} (see Theorem~\ref{hyperb})
{\it Let $G$ be a hyperbolic group, let $H_1$ be a quasiconvex subgroup of $G$, and let $H_2$
be an arbitrary subgroup of $G$. Suppose that $H_2$ is elementwise conjugate into $H_1$. Then
there exists a finite index subgroup of $H_2$ which is conjugate into~$H_1$.}

\medskip

We deduce from this two corollaries:

\medskip

{\bf Corollary B.} (see Corollary~\ref{Dezember1})
{\it Let $G$ be a torsion-free hyperbolic group with the property~LR.
If $G$ is hereditarily CS, then $G$ is hereditarily SICS and SCS.}

\medskip

Our argument does not allow to leave out the word heridetarily.

The following corollary is about hyperbolic groups in the class $\mathcal{Q}\mathcal{V}\mathcal{H}$, see Definition~\ref{QVH}.
This class was introduced by Wise~\cite[Definition 11.5]{Wise2} in the process of solution of the virtual Haken conjecture (see the surveys of Bestvina~\cite{Bestvina} and Friedl~\cite{Friedl_2}).

\medskip

{\bf Corollary C.} (see Corollary~\ref{main_cor})
{\it Every torsion-free hyperbolic groups from the class $\mathcal{Q}\mathcal{V}\mathcal{H}$
is hereditarily  quasiconvex-SICS and hereditarily  quasiconvex-SCS.\break In particular,
the following groups possess these properties:

\begin{enumerate}


\item[(1)] Fundamental groups of closed hyperbolic {\rm 3}-manifolds.

\item[(2)] Torsion-free groups with finite $C'(1/6)$ or $C'(1/4)-T(4)$ presentations.

\item[(3)] Surface groups.
\end{enumerate}
}

\medskip

{\bf Remark.}
{\rm
(1) An alternative proof of a variant of Corollary C is given by Chagas and Zalesskii in~\cite[Theorem~1.2]{CZ3}.

(2) As mentioned above, every quasiconvex subgroup of a hyperbolic group is finitely generated.
The converse is true for all surface groups, but is not true for some small cancellation groups; and it is not true for all fundamental groups of closed hyperbolic 3-manifolds (see Remark~\ref{U-Bahn}).
Thus, surface groups are hereditarily SICS and hereditarily SCS;
this was already proved in~\cite{BogBux} by another method.

Theorem~D~\!(2) below shows that not all
torsion-free groups with finite $C'(1/6)$ presentations are SICS/SCS.
We do not know whether fundamental groups of hyperbolic 3-manifolds are SICS/SCS.

}


\medskip

The following theorem shows that, in general, CS does not imply SICS and SCS, and that SCS does not imply SICS.

\medskip

{\bf Theorem~D.} (see Proposition~\ref{SCS, but not cyclic-SICS} and Corollary~\ref{CS but not SICS/SCS})

\begin{enumerate}
\item[{\rm (1)}] {\it There exists a finitely generated torsion free nilpotent group which is SCS, but not SICS.}

\medskip

\item[{\rm (2)}] {\it There exists a torsion-free group with finite $C'(1/6)$ presentation that
is hereditarily CS, but is not SICS and is not SCS.
}
\end{enumerate}

\medskip

The structure of the paper is the following. In Section 2 we give an information on quasi-convex subgroups and retracts. Since our strategy is to deduce SCS from SICS,
we study in Section~3, in which cases this is possible.
In particular, we show that SCS follows from SICS in the class of LR-groups,
and that qusiconvex-SCS follows from qusiconvex-SICS in the class of hyperbolic groups.

In most applications we work with groups from
the class LR. Therefore we study in Section~4, when SICS can be carried to a finite index overgroup.
In Section~5 we summarize definitions and results on $\mathcal{Q}\mathcal{V}\mathcal{H}$-groups.
In Section~6, we deduce Corollaries~B and~C from a corollary to Theorem~A.
Theorem~A is proven in Section~7.

In Section~8, we discuss algorithmic problems concerning SICS and SCS.
We deduce Theorem~D from undecidability of some of these problems and
with the help of constructions of Segal and Rips.










\section{Quasiconvex subgroups and retracts}

\begin{defn} {\rm

(1) Let $\epsilon\geqslant 0$. A subset $Y$ of a geodesic metric space $X$ is called {\it $\epsilon$-quasiconvex}
if every geodesic with ends in $X$ lies in the $\epsilon$-neighborhood of $X$.\break
A subset $Y\subseteq X$ is called {\it quasiconvex}, if $Y$ is $\epsilon$-quasi-convex for some $\epsilon\geqslant 0$.

(2) A subgroup $H$ of a hyperbolic group $G$ is called {\it quasiconvex} if $H$ is a quasi-convex subset
in the Cayley graph $\Gamma(G,Y)$ with respect to some (and then any) finite generating set $Y$ of $G$
(see Remark~\ref{quasi-convex_and quasi-isom}).

(3) Let $\lambda\geqslant 1$ and $\epsilon\geqslant 0$. Let $(Z_1,d_1)$ and $(Z_2,d_2)$ be two metric spaces. A map $f:Z_1\rightarrow Z_2$ is called a {\it  $(\lambda,\epsilon)$-quasi-isometric embedding} if for every $x,y\in Z_1$ holds
$$\frac{1}{\lambda}\cdot d_2(f(x),f(y))-\epsilon \leqslant d_1(x,y)\leqslant \lambda\cdot d_2(f(x),f(y))+\epsilon$$
A map $f:Z_1\rightarrow Z_2$ is called a {\it quasi-isometric embedding} if it is a {\it $(\lambda,\epsilon)$-quasi-isometric embedding}
for some $\lambda\geqslant 1$ and $\epsilon\geqslant 0$.
}
\end{defn}

\begin{rmk}\label{quasi-convex_and quasi-isom}
{\rm Groups can be considered as metric spaces with respect to a chosen generating set.
In the category of finitely generated groups, the notion `a quasiconvex subgroup' is not well defined (it may happen that a f.g. subgroup $H$ of a f.g. group $G$ is quasiconvex with respect to some finite generating set of $G$ and is not quasiconvex with respect to another).
However, the notion `a quasi-isometric map' in this category is well defined.

If we restrict to hyperbolic groups $G$, then both notions are well defined and closely related:
A subgroup $H$ of a hyperbolic group $G$ is quasiconvex if and only if $H$ is finitely generated and
the natural embedding $H\hookrightarrow G$ is a quasi-isometric embedding. Moreover, quasiconvex subgroups of hyperboloic groups are hyperbolic.
}
\end{rmk}

\begin{rmk}\label{U-Bahn}
{\rm
\begin{enumerate}

\item[(i)] Every finitely generated subgroup of a surface group is quasi\-convex
by~\cite[Corolary 3.8]{Gitik_1}.

\item[(ii)] By the Rips construction~\cite{Rips}, there exists a torsion-free group $G$ with a finite $C'(1/6)$-presentation
that contains a finitely generated but not finitely presented subgroup $H$. This $H$ is not quasiconvex
since every quasiconvex subgroup of a hyperbolic group must be finitely presented.

\item[(iii)] For every closed hyperbolic 3-manifold $M$, the fundamental group $\pi_1(M)$ contains a finitely
generated but not a quasiconvex subgroup. Indeed, the virtual fibering conjecture, proved in~\cite[Theorem 9.2]{Agol}, says that
$M$ admits a finite-sheeted cover $\widetilde{M}\rightarrow M$ such that $\widetilde{M}$ fibers over the circle, where the fiber is a closed compact surface $S$.
Hence, there exists a short exact sequence
$$1\rightarrow \pi_1(S)\rightarrow \pi_1(\widetilde{M})\rightarrow \mathbb{Z}\rightarrow 1.$$
In particular, $\pi_1(S)$ is a normal subgroup of infinite index in $\pi_1(\widetilde{M})$.
But every infinite quasiconvex subgroup in a hyperbolic group has finite index in its normalizer
(see~\cite[Theorem 2]{MT}).
Thus, $\pi_1(S)$ is not quasiconvex in $\pi_1(\widetilde{M})$, and hence in $\pi_1(M)$.
\end{enumerate}
}
\end{rmk}

\begin{defn}\label{LR}
{\rm  A subgroup $H$ of a group $G$ is called a {\it retract} of $G$ if there exists an epimorphism $f:G\rightarrow H$ with $f_{|H}={\rm id}$. The epimorphism $f$ is called a {\it retraction}. Equivalently, $H$ is retract of $G$ if $G=K\rtimes H$ for some subgroup $K$ of $G$. A subgroup $H$ of $G$ is called a {\it virtual retract} of $G$ if $H$ is a retract of a finite index subgroup of~$G$. According to Long and Ride~\cite[Introduction]{LR},
a group $G$ has {\it property LR} (it {\it admits local retractions}) if every finitely generated subgroup $H\leqslant G$ is a virtual retract of $G$. We call such groups {\it LR-groups}.}
\end{defn}

Retracts have useful properties and applications.

\begin{lem}\label{observ}
Retracts in finitely generated groups are quasi-isometrically embedded into the ambient group. Retracts in hyperbolic groups are quasiconvex.
\end{lem}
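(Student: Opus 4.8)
The plan is to prove both assertions at once by exhibiting the inclusion $H\hookrightarrow G$ as a \emph{bi-Lipschitz} embedding with respect to suitably matched word metrics, and then, in the hyperbolic case, to invoke the characterization recorded in Remark~\ref{quasi-convex_and quasi-isom}. So let $f\colon G\to H$ be a retraction (with $f_{|H}=\mathrm{id}$), fix a finite generating set $S$ of $G$, and set $T:=f(S)$. Since $f$ is surjective and $G=\langle S\rangle$, we have $H=f(G)=\langle f(S)\rangle=\langle T\rangle$, so $T$ is a finite generating set of $H$ and $H$ is finitely generated; this already gives us well-defined word metrics $d_G$ (with respect to $S$) and $d_H$ (with respect to $T$) to compare.

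Next I would establish the two inequalities. For the upper estimate, put $L:=\max_{t\in T}|t|_S$; writing any $x^{-1}y$ (with $x,y\in H$) geodesically in $T$ and replacing each $T$-letter by an $S$-word of length $\leqslant L$ yields $d_G(x,y)\leqslant L\cdot d_H(x,y)$. For the reverse estimate I would use the retraction as a distance-non-increasing ``projection'': since $f(S)\subseteq T$ and $f(s^{-1})=f(s)^{-1}$, applying $f$ to an $S$-geodesic for $g^{-1}g'$ shows $d_H(f(g),f(g'))\leqslant d_G(g,g')$ for all $g,g'\in G$, i.e.\ $f$ is $1$-Lipschitz. Restricting to $x,y\in H$ and using $f_{|H}=\mathrm{id}$ gives $d_H(x,y)=d_H(f(x),f(y))\leqslant d_G(x,y)$. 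Combining, $d_H(x,y)\leqslant d_G(x,y)\leqslant L\cdot d_H(x,y)$, so the inclusion is a $(L,0)$-quasi-isometric embedding; as quasi-isometric embedding is a coarse notion, it is independent of the chosen finite generating sets, proving the first sentence.

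For the second sentence, assume $G$ is hyperbolic (hence finitely generated, so the above applies). We have just shown that $H$ is finitely generated and that the natural embedding $H\hookrightarrow G$ is a quasi-isometric embedding; by the criterion in Remark~\ref{quasi-convex_and quasi-isom} this is exactly what it means for $H$ to be quasiconvex, so we are done.

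I expect no serious obstacle here: the only step requiring an actual idea is the lower bound $d_H\leqslant d_G$, whose point is that the retraction $f$ furnishes a Lipschitz left inverse to the inclusion that fixes $H$ pointwise — this is precisely what forces the distortion to be bounded. The remaining items (finite generation of $H$, the Lipschitz upper bound, and invariance of the conclusion under change of generating set) are routine.
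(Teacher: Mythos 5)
Your argument is correct and is essentially the paper's own: both proofs compare word metrics on $H$ and $G$ via a pair of inequalities, one obtained by rewriting generators of one group in terms of the other (giving the bound with constant $L$, the paper's $M$), and one obtained by applying the retraction $f$ letter-by-letter to a geodesic word in $G$; the only cosmetic difference is that you take $T=f(S)$ while the paper arranges $X\subseteq Y$, which swaps which of the two inequalities is the trivial one. The reduction of the hyperbolic case to Remark~\ref{quasi-convex_and quasi-isom} is likewise exactly what the paper does.
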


\medskip

{\it Proof.} Let $f:G\rightarrow H$ be a retraction of groups.
Suppose that $Y$ is a finite generating set of $G$ containing a finite generating set $X$ of $H$.
Let $d_Y$ and $d_X$ be the corresponding word metrics on $G$ and $H$. Set $M=\underset{y\in Y}{\max} \, d_X(f(y))$.
To show that the natural embedding $H\hookrightarrow G$ is a quasi-isometric embedding, it suffices to show that for every $h\in H$ holds
$$d_Y(h)\leqslant d_X(h)\leqslant M\cdot d_Y(h).$$

The first inequality holds since $X\subseteq Y$. The second one follows from the fact that if
$h=y_1\dots y_n$ with factors from $Y\cup Y^{-1}$ and $n=d_Y(h)$, then $h=f(y_1)\dots f(y_n)$.
The last statement of this lemma follows from the last paragraph of Remark~\ref{quasi-convex_and quasi-isom}.
\hfill $\Box$

\begin{lem}\label{retracts_and_roots}  Let $G$ be a group with the unique root property.
Then retracts of $G$ are closed under taking of roots. In particular,
retracts of torsion-free hyperbolic groups are closed under taking of roots.
\end{lem}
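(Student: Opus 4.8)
The plan is to exploit the retraction together with the unique root property directly, with essentially no geometry needed for the main assertion. Let $f\colon G\to H$ be a retraction, so that $f_{|H}=\mathrm{id}$, and suppose $g\in G$ satisfies $g^n\in H$ for some positive integer $n$; the goal is to show $g\in H$, which is exactly what it means for the retract $H$ to be closed under taking roots.

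First I would apply $f$ to the relation $g^n=h$, where $h:=g^n\in H$. Since $f$ is a homomorphism, $f(g)^n=f(g^n)=f(h)$, and since $f$ restricts to the identity on $H$ we have $f(h)=h=g^n$. Hence $f(g)^n=g^n$, so both $f(g)$ and $g$ are $n$-th roots of the single element $g^n$ of $G$. The unique root property now says that an element of $G$ has at most one $n$-th root, whence $f(g)=g$. But $f$ takes values in $H$, so $g=f(g)\in H$, as desired. This settles the first sentence of the lemma.

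For the ``in particular'' clause I would verify that every torsion-free hyperbolic group $G$ enjoys the unique root property. Suppose $a^n=b^n$ with $n\geq 1$. If $a^n=1$, then torsion-freeness forces $a=1=b$. Otherwise $a$ commutes with $a^n$ and $b$ commutes with $b^n=a^n$, so $a,b\in C_G(a^n)$. The essential input is the standard fact that the centralizer of a nontrivial element in a torsion-free hyperbolic group is infinite cyclic, say $C_G(a^n)=\langle c\rangle$. Writing $a=c^i$ and $b=c^j$, the equation $c^{in}=c^{jn}$ together with the infinite order of $c$ gives $in=jn$, hence $i=j$ and $a=b$. Thus $G$ has the unique root property, and the second statement follows from the first.

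The argument is short, and the only genuine obstacle is recalling the structure of centralizers in torsion-free hyperbolic groups; once the unique root property is in hand, the main observation is the near-trivial but decisive point that applying the retraction to a power relation manufactures a second $n$-th root that automatically lies in $H$.
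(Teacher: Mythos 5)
Your proof is correct and follows essentially the same route as the paper: apply the retraction $f$ to the power relation to get $f(g)^n=g^n$, then invoke the unique root property to conclude $g=f(g)\in H$, and derive the unique root property for torsion-free hyperbolic groups from the cyclicity of centralizers of nontrivial elements. The only difference is that you spell out the centralizer argument in detail, whereas the paper merely cites it.
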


{\it Proof.} Let $f:G\rightarrow H$ be a retraction.
Suppose that $g\in G$ is such that $g^n\in H$ for some $n\in \mathbb{N}$.
Then $f(g)^n=f(g^n)=g^n$. Since $G$ has the unique root property, we have $g=f(g)\in H$.

The last statement follows from the fact that torsion-free hyperbolic groups have the unique root property.
The latter follows from the fact that
nontrivial elements of torsion-free hyperbilic groups have cyclic centralizers
(see~\cite[Chapter III.$\Gamma$, Corollary~3.10\,(2)]{Br}).
\hfill $\Box$




\section{When SICS implies SCS}

We say that an automorphism $\alpha:G\rightarrow G$ of the group $G$ {\it expands} (or {\it contracts}) a subugroup $H\leqslant G$ if $H<H^{\alpha}$ (or $H>H^{\alpha}$, respectively) is a strict inclusion. We call an automorphism $\alpha:G\rightarrow G$ {\it expanding} (or {\it contracting}) if there is a finitely generated subgroup $H\leqslant G$ that is expanded (or contracted) by $\alpha$.

\begin{lem}\label{observation}
Let $H_1$ and $H_2$ be two finitely generated
subgroups of a group $G$.
Assume that $G$ does not admit
expanding inner automorphisms. If $H_2$ conjugates into $H_1$ and $H_1$ conjugates into $H_2$,
then $H_1$ and $H_2$ are conjugate. More precisely, for any two elements $g,h\in G$ with
$H_2^g\leqslant H_1$ and $H_1^h\leqslant H_2$ one already has equality: $H_2^g=H_1$ and $H_1^h=H_2$.
\end{lem}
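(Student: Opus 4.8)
The plan is to compose the two given inclusions into a single inner conjugation of $H_2$ into itself, and then to rule out a proper inclusion by manufacturing an expanding inner automorphism. First I would record the two hypotheses as $g^{-1}H_2 g\leqslant H_1$ and $h^{-1}H_1 h\leqslant H_2$, and note the elementary identity $A^{gh}=(A^g)^h$ that follows from the convention $A^c=c^{-1}Ac$ (so that $A^{gh}=(gh)^{-1}A(gh)=h^{-1}(g^{-1}Ag)h$).

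Next, conjugating the first inclusion by $h$ and chaining it with the second, I obtain
\[
H_2^{gh}\;=\;(H_2^{g})^{h}\;\leqslant\;H_1^{h}\;\leqslant\;H_2 .
\]
Thus the element $gh$ conjugates $H_2$ into itself, and the whole argument reduces to showing that this containment is an equality.

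The key step is to upgrade $H_2^{gh}\leqslant H_2$ to equality using the hypothesis on inner automorphisms. Suppose the inclusion were strict. Set $K:=H_2^{gh}$; since $K$ is a conjugate of the finitely generated group $H_2$, it is itself finitely generated, and one computes $K^{(gh)^{-1}}=H_2^{gh(gh)^{-1}}=H_2$, so that $K<K^{(gh)^{-1}}$ is a strict inclusion. Hence the inner automorphism given by conjugation by $(gh)^{-1}$ expands the finitely generated subgroup $K$, contradicting the assumption that $G$ admits no expanding inner automorphism. Therefore $H_2^{gh}=H_2$.

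Finally, I would read off both asserted equalities from the forced identity $H_2^{gh}=H_2$. Since the chain $H_2^{gh}\leqslant H_1^{h}\leqslant H_2$ now begins and ends with $H_2$, both inclusions collapse; in particular $H_1^{h}=H_2$, the second claim. Conjugating $(H_2^{g})^{h}=H_2^{gh}=H_2$ by $h^{-1}$ gives $H_2^{g}=H_2^{h^{-1}}=H_1$ (using $H_1^{h}=H_2$), the first claim. The only genuine content is the middle step; the remainder is bookkeeping with the conjugation convention. I do not expect a serious obstacle here, beyond being careful that the hypothesis is invoked for the conjugator $(gh)^{-1}$ — equivalently, observing that the absence of expanding inner automorphisms is the same as the absence of contracting ones, since conjugating a strict inclusion $K<cKc^{-1}$ by $c^{-1}$ interchanges the two phenomena.
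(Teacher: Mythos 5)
Your proof is correct and takes essentially the same route as the paper's: compose the two inclusions into a self-conjugation and rule out strictness using the absence of expanding inner automorphisms. The only cosmetic difference is that the paper chains the inclusions in the opposite order to get $H_1\leqslant H_2^{h^{-1}}\leqslant H_1^{g^{-1}h^{-1}}$, which is already an expansion of $H_1$ and so avoids your extra (but correct) step of converting the contraction $H_2^{gh}\leqslant H_2$ into an expansion of the finitely generated conjugate $K=H_2^{gh}$.
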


{\it Proof.} We have $H_1\leqslant H_2^{h^{-1}}\leqslant H_1^{g^{-1}h^{-1}}$.
Put $f := g^{-1}h^{-1}$ and consider the associated
inner automorphism. Since it is not expanding, the inclusion $H_1 \leqslant H_1^f$
is not strict. Hence
$H_1 = H_1^f$ that implies $H_1^h=H_2$ and $H_2^g=H_1$.\hfill $\Box$

\begin{prop}
Suppose that a group $G$ does not admit
expanding inner automorphisms and is SICS, then it is SCS.
\end{prop}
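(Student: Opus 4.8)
The plan is to reduce SCS to SICS using the hypothesis that $G$ admits no expanding inner automorphisms, leveraging Lemma~\ref{observation}. Suppose $H_1$ and $H_2$ are two finitely generated subgroups that are \emph{not} conjugate in $G$; I must produce a finite quotient in which their images remain non-conjugate. The key observation is that two subgroups are conjugate precisely when each is conjugate into the other (this is where the no-expanding-automorphisms hypothesis enters via Lemma~\ref{observation}: under that hypothesis, mutual into-conjugacy forces genuine conjugacy, with equality of the relevant subgroups). So failure of conjugacy means that at least one of the two into-conjugacy relations fails: either $H_2$ is not conjugate into $H_1$, or $H_1$ is not conjugate into $H_2$.

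\emph{First} I would split into these two cases. In either case, I have a pair of finitely generated subgroups such that one is not conjugate into the other, and SICS applies directly to that ordered pair. Applying SICS to whichever into-conjugacy fails, I obtain a homomorphism $\varphi$ from $G$ onto a finite quotient $\overline{G}$ in which the image of one subgroup is not conjugate into the image of the other. \emph{Next} I observe that if the images $\varphi(H_1)$ and $\varphi(H_2)$ were conjugate in $\overline{G}$, then in particular each would be conjugate into the other, contradicting the conclusion supplied by SICS. Hence $\varphi(H_1)$ and $\varphi(H_2)$ are non-conjugate in $\overline{G}$, which is exactly the SCS conclusion for the pair $H_1, H_2$.

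\emph{The main subtlety} to verify is that the two into-conjugacy relations together are genuinely equivalent to conjugacy, so that the negation of conjugacy really does yield the negation of one of the into-conjugacy relations. This is precisely the content of Lemma~\ref{observation}: the no-expanding-inner-automorphisms hypothesis guarantees that $H_2^g \leqslant H_1$ and $H_1^h \leqslant H_2$ force $H_2^g = H_1$, hence conjugacy. Thus the logical contrapositive gives what I need, and no further combinatorial or geometric work is required; the proof is a short logical deduction combining Lemma~\ref{observation} with the defining property of SICS. I expect the only point demanding care to be the direction of the into-conjugacy in each case and the matching of hypotheses when invoking SICS, but this is bookkeeping rather than a genuine obstacle.
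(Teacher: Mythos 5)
Your proof is correct and follows essentially the same route as the paper: use Lemma~\ref{observation} (contrapositively) to get that one into-conjugacy relation fails, apply SICS to that ordered pair, and note that non-conjugacy-into in the finite quotient implies non-conjugacy there. The bookkeeping point you flag is handled in the paper simply by symmetry of the two cases, exactly as you anticipate.
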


{\it Proof.} Let $H_1$ and $H_2$ be two non-conjugate finitely generated subgroups of $G$. By
Lemma~\ref{observation}, $H_1$ is not conjugate into $H_2$ or $H_2$ is not conjugate into $H_1$. Both cases
are symmetric and we assume that $H_2$ is not conjugate into $H_1$.
Since $G$ is SICS, there exists a homomorphism from $G$ onto a finite group $\overline{G}$
such that the image of $H_2$ is not conjugate into the image of $H_1$ in $\overline{G}$.
In particular, the image of $H_2$ is not conjugate to the image of $H_1$ in $\overline{G}$. Hence, $G$ is SCS.
\hfill $\Box$

\begin{lem}
LR-groups do not admit expanding inner automorphisms.
\end{lem}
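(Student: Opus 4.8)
The plan is to prove the statement in two stages: first for genuine retracts, and then to bootstrap to virtual retracts using the finite-index retraction supplied by the LR property. Throughout, let $\alpha$ denote the inner automorphism given by conjugation by an element $c$, so that $H^{\alpha}=H^c=c^{-1}Hc$, and recall that $\alpha$ \emph{expands} $H$ means $H<H^c$ is strict.

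First I would establish the key fact for an honest retract: if $f\colon G\rightarrow H$ is a retraction and $c\in G$ satisfies $H\leqslant H^c$, then in fact $H=H^c$. The inclusion $H\leqslant H^c$ is equivalent to $cHc^{-1}\leqslant H$, so for every $h\in H$ the element $chc^{-1}$ lies in $H$ and is fixed by $f$. Applying the homomorphism $f$ and using $f_{|H}=\mathrm{id}$ gives $chc^{-1}=f(chc^{-1})=f(c)\,h\,f(c)^{-1}$ for all $h\in H$. Rearranging, the element $v:=f(c)^{-1}c$ centralizes $H$, i.e. $v\in C_G(H)$. Writing $c=f(c)\,v$ with $f(c)\in H$ and $v\in C_G(H)$, we obtain $cHc^{-1}=f(c)\,(vHv^{-1})\,f(c)^{-1}=f(c)\,H\,f(c)^{-1}=H$, hence $H^c=H$. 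In particular no inner automorphism of $G$ expands a retract of $G$.

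Next comes the reduction to the LR hypothesis. Suppose toward a contradiction that some inner automorphism $\alpha$, conjugation by $c$, expands a finitely generated subgroup $H$, so that $H<H^c$. Applying $\alpha$ repeatedly (it is an automorphism, hence preserves strict inclusions) yields a strictly ascending chain $H<H^c<H^{c^2}<\cdots$. Since $G$ is an LR-group and $H$ is finitely generated, $H$ is a virtual retract: there is a finite-index subgroup $G_0\leqslant G$ with $H\leqslant G_0$ and a retraction $f\colon G_0\rightarrow H$. As $G_0$ has finite index, the right cosets $G_0c^{\,i}$, $i\geqslant 0$, cannot all be distinct, so $c^k\in G_0$ for some $k\geqslant 1$. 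Then $H^{c^k}=c^{-k}Hc^{k}\subseteq G_0$, while the strictly ascending chain gives $H<H^{c^k}$. But $H$ is a retract of $G_0$ and $c^k\in G_0$ satisfies $H\leqslant H^{c^k}$, so the key fact, applied inside $G_0$, forces $H=H^{c^k}$ — contradicting $H<H^{c^k}$ and finishing the proof.

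I expect the main obstacle to be this second stage rather than the centralizer computation in the first: the inner automorphism is governed by an element $c$ of the ambient group $G$, whereas the retraction provided by the LR property lives only on a finite-index subgroup $G_0$ that need not contain $c$, and a priori neither $c$ nor $H^c$ sits inside $G_0$. The device of replacing $c$ by a suitable power $c^k\in G_0$ repairs this, but one must check both that strictness survives (guaranteed because \emph{every} step of the ascending chain is strict, so $H<H^{c^k}$) and that $H^{c^k}$ again lands in $G_0$ (guaranteed by $c^k\in G_0$ together with $H\leqslant G_0$), so that the retract argument can legitimately be run inside $G_0$.
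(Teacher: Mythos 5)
Your proof is correct and follows essentially the same route as the paper's: pass to a power $c^k$ of the conjugator lying in the finite-index subgroup $G_0$ on which the retraction is defined, then use the retraction to show that conjugation by $c^k$ restricted to $H$ coincides with conjugation by the element $f(c^k)\in H$, hence cannot move $H$ properly. Your extra packaging via the centralizing element $v=f(c)^{-1}c$ and the explicit ascending chain $H<H^{c}<H^{c^{2}}<\cdots$ is just a mild reorganization of the same computation.
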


{\it Proof.} Suppose the contrary, that is,
there exists an LR-group $G$, a finitely generated subgroup $H$ of~$G$, and an element $g\in G$
such that $H$ is strictly contained in $H^g$.
By~definition of an LR-group, there exists a subgroup $G_0$ of finite index in $G$ containing $H$ and a retraction
$f:G_0\rightarrow \hspace*{-3mm}\rightarrow H$.
Then $g^n\in G_0$ for some $n\in \mathbb{N}$.
Since $H^{g^{-1}}<H\leqslant G_0$, we have $H^{g^{-n}}\leqslant G_0$. Moreover, $H^{g^{-n}}$ is strictly contained in $H$.
For each $h\in H$, we have $$h^{g^{-n}}=f(h^{g^{-n}})=f(h)^{f(g^{-n})}=h^{f(g^{-n})}.$$

Thus, restricted to $H$, conjugation by $g^{-n}$ is an inner automorphism
of $H$, namely conjugation by $f(g^{-n})$. Hence $H^{g^{-n}}=H$ and conjugation
by $g$ cannot expand $H$.

\hfill $\Box$

\begin{cor}\label{reduction_free}
If an LR-group is SICS, then it is SCS.
\end{cor}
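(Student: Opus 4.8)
The plan is to obtain this corollary by simply composing the two results immediately preceding it, so that no new argument is required. First I would invoke the Lemma above asserting that LR-groups do not admit expanding inner automorphisms; this shows that the hypothesis ``$G$ does not admit expanding inner automorphisms'' in the Proposition above is automatically met by every LR-group. Then, together with the standing assumption that the LR-group in question is SICS, the Proposition applies verbatim and delivers the conclusion that the group is SCS.

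More concretely, let $G$ be an LR-group that is SICS. By the Lemma above, $G$ admits no expanding inner automorphisms. Thus $G$ satisfies both hypotheses of the Proposition above---it is SICS and has no expanding inner automorphisms---whence that Proposition yields directly that $G$ is SCS. This is the entire content of the corollary.

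I do not anticipate any real obstacle here, since the statement is a purely formal consequence of the two preceding results; all the substantive work has already been carried out. The genuine input is twofold: in the Lemma, where property LR is used to pass to a finite-index retract $f\colon G_0\twoheadrightarrow H$ and thereby show that conjugation by a suitable power $g^{-n}$ restricts to an \emph{inner} automorphism of $H$, forcing $H^{g^{-n}}=H$ rather than a strict expansion; and in the Proposition, where the absence of expanding inner automorphisms is what upgrades two-sided ``conjugate into'' relations to an honest conjugacy (via Lemma~\ref{observation}), so that distinguishing non-conjugate subgroups reduces to the one-directional ``not conjugate into'' problem handled by SICS.
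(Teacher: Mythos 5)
Your proposal is correct and is exactly the argument the paper intends: the corollary is stated without proof precisely because it follows immediately by combining the preceding Lemma (LR-groups admit no expanding inner automorphisms) with the preceding Proposition (no expanding inner automorphisms plus SICS implies SCS). Nothing further is needed.
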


\begin{rmk} {\rm Suppose that $G$ is a hyperbolic group and $H$ is an infinite quasiconvex subgroup of $G$. Mihalik and Towle show in~\cite{MT}, more precisely in item~(2) of their main theorem (formulated only in the abstract), that no inner automorphism of $G$ can contract (nor expand) $H$. Since finite subgroups cannot be expanded either, we deduce the following corollary.}
\end{rmk}


\begin{cor}\label{quasi-convex-SICS}
If a hyperbolic group is quasiconvex-SICS, then it is quasiconvex-SCS.
\end{cor}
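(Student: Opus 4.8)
The plan is to reproduce, in the quasiconvex setting, the proof of the Proposition above, which showed that a group without expanding inner automorphisms that is SICS must be SCS. The only place where the hypothesis ``does not admit expanding inner automorphisms'' entered that argument was through Lemma~\ref{observation}, and there it was used only for the single subgroup $H_1$: all that is needed is that the inclusion $H_1 \leqslant H_1^f$ forced by the chain $H_1 \leqslant H_2^{h^{-1}} \leqslant H_1^{g^{-1}h^{-1}}$ cannot be strict. So the first step is to isolate exactly this as a quasiconvex analogue of Lemma~\ref{observation}.

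First I would invoke the Remark preceding this corollary: by the result of Mihalik and Towle, for a hyperbolic group $G$ no inner automorphism can expand an infinite quasiconvex subgroup, and finite subgroups cannot be expanded either; hence no inner automorphism of $G$ expands any quasiconvex subgroup. Applying this to $H_1$ in the argument of Lemma~\ref{observation} yields the desired analogue: if $H_1$ and $H_2$ are quasiconvex subgroups of $G$ with $H_2$ conjugate into $H_1$ and $H_1$ conjugate into $H_2$, then $H_1$ and $H_2$ are in fact conjugate, with conjugating elements realizing the equalities $H_2^g = H_1$ and $H_1^h = H_2$. Here I would also use that a conjugate of a quasiconvex subgroup is again quasiconvex, so that the subgroups $H_2^{h^{-1}}$ and $H_1^{g^{-1}h^{-1}}$ appearing in the chain are themselves quasiconvex and the non-expansion result applies to them.

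With this quasiconvex version of Lemma~\ref{observation} available, the main argument copies the Proposition verbatim. Let $H_1$ and $H_2$ be two non-conjugate quasiconvex subgroups of $G$. By the quasiconvex analogue of Lemma~\ref{observation} they cannot each be conjugate into the other, so, after possibly exchanging the symmetric roles of $H_1$ and $H_2$, I may assume that $H_2$ is not conjugate into $H_1$. Since $G$ is quasiconvex-SICS, there is an epimorphism of $G$ onto a finite quotient $\overline{G}$ in which the image of $H_2$ is not conjugate into the image of $H_1$; in particular the two images are not conjugate in $\overline{G}$. Thus $G$ is quasiconvex-SCS.

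The only genuine content, and hence the step I expect to bear the weight, is the non-expandability of quasiconvex subgroups under inner automorphisms of a hyperbolic group: this is precisely what forces the ``two-sided into-conjugacy implies conjugacy'' dichotomy that makes the symmetric reduction go through. It is, however, already supplied by the Mihalik--Towle theorem cited in the Remark, so no new geometric input is required, and the remaining steps are just the formal bookkeeping of the Proposition's argument transported into the quasiconvex framework.
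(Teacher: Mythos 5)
Your proposal is correct and follows essentially the same route as the paper: the paper's proof consists precisely of the Remark preceding the corollary, which invokes the Mihalik--Towle theorem to conclude that no inner automorphism of a hyperbolic group can expand a quasiconvex subgroup (infinite ones by their result, finite ones trivially), and then transports the arguments of Lemma~\ref{observation} and the subsequent Proposition to the quasiconvex setting exactly as you do. Your extra remark about conjugates of quasiconvex subgroups being quasiconvex is harmless but not needed, since the non-expansion hypothesis is only ever applied to $H_1$ itself via the inner automorphism associated to $f=g^{-1}h^{-1}$.
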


\section{Passing to finite index overgroups}

For a compact formulation of further results, we need the following terminology:
Let $H_1$ and $H_2$ be finitely generated subgroups of $G$. We say that $H_1$ is
{\it con-sepa\-rated from $H_2$ within $G$} if there is a finite index subgroup $D\leqslant G$ containing $H_1$
such that $H_2$ is not conjugate into $D$. We call $D$ a {\it witness} of con-separation. Note that being
con-separated is not a symmetric relation. We call $H_1$ {\it con-separated in} $G$ if $H_1$
is con-separated from any finitely generated subgroup $H_2\leqslant G$ that is not already conjugate into $H_1$.

\begin{lem}\label{propJuly} Let $H_1,H_2$ be subgroups of a group $G$. Then the following conditions are equivalent:

\begin{enumerate}
\item[{\rm (1)}] $H_1$ is con-separated from $H_2$ within $G$;

\item[{\rm (2)}] There exists a homomorphism from $G$ onto a finite group $\overline{G}$ such that
the image of $H_2$ is not conjugate into the image of $H_1$.
\end{enumerate}
\end{lem}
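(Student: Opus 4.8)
The plan is to prove the two implications separately: (2) $\Rightarrow$ (1) is a routine pullback, while (1) $\Rightarrow$ (2) requires manufacturing an honest finite quotient out of the finite-index witness, which is where the only real care is needed.

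For (2) $\Rightarrow$ (1), I would start from a surjection $\varphi : G \to \overline{G}$ onto a finite group for which $\varphi(H_2)$ is not conjugate into $\varphi(H_1)$, and simply set $D := \varphi^{-1}(\varphi(H_1))$. Then $H_1 \leqslant D$, and since $\varphi$ is onto the finite group $\overline{G}$ the index $[G:D] = [\overline{G} : \varphi(H_1)]$ is finite. To see that $H_2$ is not conjugate into $D$, I would argue by contradiction: if $H_2^{c} \leqslant D$ for some $c$, then applying $\varphi$ and using $\varphi(D) = \varphi(H_1)$ (valid because $\varphi$ is surjective) gives $\varphi(H_2)^{\varphi(c)} \leqslant \varphi(H_1)$, contradicting the hypothesis. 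Hence $D$ is a witness of con-separation.

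For (1) $\Rightarrow$ (2), let $D$ be a witness: $H_1 \leqslant D$, $[G:D] < \infty$, and $H_2$ not conjugate into $D$. The quotient map $G \to G/D$ is not available, since $D$ need not be normal, so first I would pass to the normal core $N := \bigcap_{g \in G} g^{-1}Dg$, a normal subgroup of finite index (its index is at most $[G:D]!$ because $G/N$ embeds into $\mathrm{Sym}(G/D)$). I then take $\overline{G} := G/N$ with quotient map $\varphi$. Since $N \leqslant D$, one has $\varphi^{-1}(D/N) = D$ and therefore $\varphi(H_1) \leqslant D/N$. The key observation is that it suffices to show $\varphi(H_2)$ is not conjugate into the possibly larger subgroup $D/N$: indeed, were $\varphi(H_2)^{\varphi(c)} \leqslant D/N$ for some $c$, then pulling back through $\varphi$ (using $\varphi^{-1}(D/N) = D$ and $\varphi(H_2^{c}) = \varphi(H_2)^{\varphi(c)}$) would give $H_2^{c} \leqslant D$, contrary to the choice of $D$. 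Since $\varphi(H_1) \leqslant D/N$, non-conjugacy into $D/N$ forces non-conjugacy into $\varphi(H_1)$, which is exactly (2).

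The one point that deserves attention is this last step of (1) $\Rightarrow$ (2): after normalizing the witness, the image $\varphi(H_1)$ may be strictly smaller than $\varphi(D) = D/N$, so one cannot expect to recover information about $H_1$ directly. The trick is that the inclusion $H_1 \leqslant D$ only ever helps, since non-conjugacy into a group implies non-conjugacy into each of its subgroups; thus the statement about $D$ does all the work, and no hypothesis on $H_1$ or $H_2$ beyond $H_1 \leqslant D$ is used. In particular the argument never invokes finite generation, so the equivalence holds for arbitrary subgroups.
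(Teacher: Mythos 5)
Your proof is correct and follows essentially the same route as the paper: the witness for $(2)\Rightarrow(1)$ you construct, $D=\varphi^{-1}(\varphi(H_1))$, is exactly the paper's $D=H_1\cdot\ker\varphi$, and for $(1)\Rightarrow(2)$ the paper likewise passes to a finite-index normal subgroup $N\leqslant D$ and quotients by it. Your write-up merely spells out the verification (pulling conjugacy back through $\varphi$ using $N\leqslant D$) that the paper dismisses as ``obvious.''
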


{\it Proof.} $(1)\Rightarrow (2)$: Let $D$ be the witness of con-separation for $H_1$ from $H_2$.
Then $D$ contains a finite index subgroup $N$ which is normal in $G$.
Obviously, the image of $H_2$ in $G/N$ is not conjugate into the image of $H_1$.

$(2)\Rightarrow (1)$: If $\varphi:G\rightarrow \overline{G}$ is the homomorphism from (2), then $D:=H_1\cdot {\text{\rm ker}}\varphi$ is the witness of con-separation for $H_1$ from $H_2$.
$\hfill$ $\Box$

\medskip


The following lemma enables to push the con-separability within
a finite index subgroup to the con-separability within the whole group.

\begin{lem}\label{overgroups}
Let $G_1$ be a finite index subgroup of $G$, and
let $H_1\leqslant G_1$ and $H_2\leqslant G$ be finitely generated. Let $g_1,\dots ,g_k$
be a set of representatives for the left cosets $G/G_1$.
If $H_1$ is con-separated from $H_2^{g_i}$ in $G_1$ for each $i$ such that $H_2^{g_i}$
is a subgroup of $G_1$, then $H_1$ is con-separated from $H_2$ in $G$.
In particular, if $H_1$ is con-separated in $G_1$ it is also con-separated in $G$.
\end{lem}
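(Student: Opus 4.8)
The plan is to manufacture a witness of con-separation for $H_1$ from $H_2$ inside $G$ out of the witnesses provided inside $G_1$, the key device being the coset decomposition $g = g_i x$, which converts any conjugation performed in $G$ into a conjugation by an element of $G_1$ applied to one of the finitely many subgroups $H_2^{g_i}$. First I would record the basic computation: since $g_1,\dots,g_k$ represent the left cosets, every $g\in G$ can be written as $g=g_i x$ with $x\in G_1$, and then
$$H_2^{g}=(g_i x)^{-1}H_2(g_i x)=x^{-1}H_2^{g_i}x=(H_2^{g_i})^{x}.$$
Let $I$ be the set of indices $i$ with $H_2^{g_i}\leqslant G_1$. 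For each $i\in I$ the hypothesis supplies a finite index subgroup $D_i\leqslant G_1$ with $H_1\leqslant D_i$ such that $H_2^{g_i}$ is not conjugate into $D_i$ by any element of $G_1$.

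Next I would set $D:=\bigcap_{i\in I}D_i$, with the convention $D:=G_1$ when $I=\varnothing$. As each $D_i$ has finite index in $G_1$ and $G_1$ has finite index in $G$, the subgroup $D$ has finite index in $G$; and since $H_1\leqslant D_i$ for every $i\in I$ together with $H_1\leqslant G_1$, we get $H_1\leqslant D\leqslant G_1$. Thus $D$ is a legitimate candidate witness for con-separation of $H_1$ from $H_2$ inside $G$.

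Then I would argue by contradiction. Suppose $H_2^{g}\leqslant D$ for some $g\in G$, and write $g=g_i x$ with $x\in G_1$. The computation above gives $(H_2^{g_i})^{x}=H_2^{g}\leqslant D\leqslant G_1$, and since conjugation by $x\in G_1$ preserves $G_1$, this forces $H_2^{g_i}\leqslant G_1$, i.e. $i\in I$. But then $(H_2^{g_i})^{x}\leqslant D\leqslant D_i$ with $x\in G_1$ exhibits $H_2^{g_i}$ as conjugate into $D_i$ inside $G_1$, contradicting the choice of $D_i$. Hence no such $g$ exists, $D$ is a witness, and $H_1$ is con-separated from $H_2$ within $G$.

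For the final assertion, assume $H_1$ is con-separated in $G_1$ and let $H_2\leqslant G$ be finitely generated and not conjugate into $H_1$ in $G$. Each $H_2^{g_i}$ is again finitely generated, and if some $H_2^{g_i}$ with $i\in I$ were conjugate into $H_1$ by an element $y\in G_1$, then $H_2^{g_i y}\leqslant H_1$ would make $H_2$ conjugate into $H_1$ in $G$, contrary to assumption; so each such $H_2^{g_i}$ is not conjugate into $H_1$ in $G_1$, and con-separation of $H_1$ in $G_1$ furnishes the needed witnesses $D_i$, whence the first part applies. The only delicate point is the bookkeeping around $I$: the indices with $H_2^{g_i}\not\leqslant G_1$ carry no hypothesis, so I must ensure they never obstruct, and this is exactly the content of the observation that conjugating $H_2$ into $D\leqslant G_1$ forces the relevant $H_2^{g_i}$ back into $G_1$ — keeping the conjugator $x$ inside $G_1$ is what makes the descent from $G$ to $G_1$ legitimate.
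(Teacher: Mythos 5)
Your argument is correct and follows essentially the same route as the paper: decompose the conjugator as $g = g_i x$ with $x \in G_1$, take $D$ to be the intersection of the witnesses $D_i$, and derive a contradiction from $H_2^g \leqslant D$. The only cosmetic difference is that the paper sets $D_i := G_1$ for the indices with $H_2^{g_i} \not\leqslant G_1$ and intersects all $k$ subgroups, whereas you intersect only over the relevant indices and rule out the others by the same observation (conjugation by $x \in G_1$ preserves containment in $G_1$); you also spell out the ``in particular'' clause, which the paper leaves implicit.
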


{\it Proof.} If $H_2^{g_i}$ is contained in $G_1$,
let $D_i\leqslant G_1$ be a witness that $H_1$ is con-separated from
$H_2^{g_i}$ within $G_1$. Otherwise, put $D_i := G_1$. Note that in either case, $H_2^{g_i}$
is not conjugate into $D_i$ by a conjugating element of $G_1$.

\medskip
\noindent We claim that $H_1$ is con-separated from $H_2$ within $G$ with witness
$D := D_1 \cap \dots \cap D_k$. For contradiction, assume $H_2^g\leqslant D$ for some
$g\in G$. We write $g = g_ih$
for some $h\in G_1$. Then $H_2^{g_ih}\leqslant D\leqslant D_i$ whence $H_2^{g_i}$
would be conjugate into $D_i$ by a
conjugating element of $G_1$. This is a contradiction.\hfill $\Box$

\medskip

\begin{defn}\label{defn_CD}
{\rm A subgroup $H$ of $G$ is called {\it conjugacy distinguished} in $G$ if for every element $g\in G$
that is not conjugate into $H$, there exists a homomorphism $f:G\rightarrow \overline{G}$ with finite $\overline{G}$
such that $f(g)$ is not conjugate into $f(H)$.}
\end{defn}

\begin{rmk}\label{rmk_CD} {\rm The following definition is equivalent to Definition~\ref{defn_CD}:

A subgroup $H$ of $G$ is called {\it conjugacy distinguished} in $G$ if for every element $g\in G$
that is not conjugate into $H$, there exists a subgroup $D$ of finite index in $G$ such that
$D$ contains $H$ and $g$ is not conjugate into $D$.}

\end{rmk}

The following lemma can be proved analogously if we replace the subgroup $H_2\leqslant G$ by an element $h_2\in G$
and the words ``con-separated'' by ``conjugacy distinguished''.

\begin{lem}\label{overgroups_elem}
Let $G_1$ be a finite index subgroup of $G$, and
let $H_1\leqslant G_1$. If $H_1$ is conjugacy distinguished in $G_1$, then it is conjugacy distinguished in $G$.
\end{lem}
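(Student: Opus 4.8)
The plan is to mirror the proof of Lemma~\ref{overgroups} almost verbatim, substituting the single element $g$ for the subgroup $H_2$ and the notion of being conjugacy distinguished (in the subgroup form of Remark~\ref{rmk_CD}) for that of being con-separated. Fix left coset representatives $g_1,\dots,g_k$ of $G/G_1$, take an element $g\in G$ that is not conjugate into $H_1$ within $G$, and aim to produce a finite index subgroup $D\leqslant G$ with $H_1\leqslant D$ such that $g$ is not conjugate into $D$.

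First I would treat each conjugate $g^{g_i}$ separately. If $g^{g_i}\in G_1$, then it cannot be conjugate into $H_1$ by an element of $G_1$, for otherwise $g$ would be conjugate into $H_1$ in $G$, contrary to assumption; so conjugacy distinguishedness of $H_1$ in $G_1$ yields a finite index subgroup $D_i\leqslant G_1$ with $H_1\leqslant D_i$ such that $g^{g_i}$ is not conjugate into $D_i$ by any element of $G_1$. If $g^{g_i}\notin G_1$, I would simply set $D_i:=G_1$; since conjugation by an element of $G_1$ preserves $G_1$, the element $g^{g_i}$ is again not conjugate into $D_i$ by any element of $G_1$. In both cases $D_i$ is a finite index subgroup of $G_1$ containing $H_1$ and failing to absorb $g^{g_i}$ under $G_1$-conjugation.

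Then I would set $D:=D_1\cap\dots\cap D_k$, which has finite index in $G_1$, hence in $G$, and contains $H_1$. To check that $g$ is not conjugate into $D$, suppose for contradiction that $g^{c}\in D$ for some $c\in G$. Writing $c=g_i h$ with $h\in G_1$ gives $g^{c}=(g^{g_i})^{h}\in D\leqslant D_i$, so $g^{g_i}$ would be conjugate into $D_i$ by the element $h\in G_1$, contradicting the previous paragraph. Hence $g$ is not conjugate into $D$, and therefore $H_1$ is conjugacy distinguished in $G$.

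The calculations here are trivial; the only point requiring care is the dichotomy on whether $g^{g_i}$ lies in $G_1$ together with the observation that a $G_1$-conjugate of an element lying outside $G_1$ stays outside $G_1$, which is exactly what makes the case $D_i=G_1$ go through. Since there is no genuine obstacle beyond this bookkeeping, the argument is a direct transcription of the proof of Lemma~\ref{overgroups}.
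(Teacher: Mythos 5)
Your proof is correct and is exactly what the paper intends: the paper gives no separate argument for Lemma~\ref{overgroups_elem}, stating only that it ``can be proved analogously'' to Lemma~\ref{overgroups} with the element $g$ in place of $H_2$ and Remark~\ref{rmk_CD} in place of con-separation, which is precisely the transcription you carried out. The one point needing care --- that a $G_1$-conjugate of an element outside $G_1$ stays outside $G_1$, so the fallback $D_i=G_1$ works --- is handled correctly.
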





\maketitle

\section{The class $\mathcal{Q}\mathcal{V}\mathcal{H}$ introduced by D.T.~Wise}

This section aims to help the reader to keep track in a variety of new definitions and results that appeared
in the course of the Wise and Agol proof of the virtually Haken conjecture. We recommend short surveys of Bestvina~\cite{Bestvina} and Friedl~\cite{Friedl_2}, and the long preprint of Wise~\cite{Wise2} for interested reader. A substantial role in this subject play {\it cube complexes},
see~\cite{Br} for instance.
In 1995, Sageev~\cite{Sageev} introduced {\it nonpositively curved cube complexes}.
In 2008, Haglund and Wise~\cite{HW2} introduced {\it special cube complexes}. The definition of a special cube complex is rather technical, therefore we skip it here and concentrate on group-theoretical aspects.

\begin{defn} {\rm A group $G$ is said to be {\it virtually (compact) special} if it contains a finite
index subgroup $H$ such that $H=\pi_1(X)$ for some (compact) special cube complex $X$.}
\end{defn}

\begin{defn}\label{QVH}~\cite[Definition 11.5]{Wise2}
{\rm Let $\mathcal{Q}\mathcal{V}\mathcal{H}$ (the letters abbreviate the words {\it quasiconvex}, {\it virtual}, and {\it hierarchy})  denote the smallest class of groups that satisfies the following axioms.

\begin{enumerate}
\item[(1)] $1\in \mathcal{Q}\mathcal{V}\mathcal{H}$.\vspace*{1mm}

\item[(2)] If $G=A\ast_B C $ and $A,C\in \mathcal{Q}\mathcal{V}\mathcal{H}$ and $B$ is finitely generated and embeds by a quasi-isometric embedding into $G$, then $G$ is $\mathcal{Q}\mathcal{V}\mathcal{H}$.\vspace*{1mm}

\item[(3)] If $G=A\ast_B$ and $A\in \mathcal{Q}\mathcal{V}\mathcal{H}$ and $B$ is finitely generated and embeds by a quasi-isometric embedding into $G$, then $G$ is in $\mathcal{Q}\mathcal{V}\mathcal{H}$.\vspace*{1mm}

\item[(4)] If $G$ contains a finite index subgroup $H$ with $H\in \mathcal{Q}\mathcal{V}\mathcal{H}$,
 then $G\in \mathcal{Q}\mathcal{V}\mathcal{H}$.
\end{enumerate}
}
\end{defn}

\begin{rmk}\label{subgr_hyp}
{\rm By~\cite[Section 6, Theorems 4 and 6]{KM},
if $G\in \mathcal{Q}\mathcal{V}\mathcal{H}$ is hyperbolic, then the groups
at each step of the hierarchic construction related to $G$ are quasiconvex in $G$, and hence hyperbolic.
Therefore the class of hyperbolic $\mathcal{Q}\mathcal{V}\mathcal{H}$-groups coincides with the class
of $\mathcal{Q}\mathcal{V}\mathcal{H}$-groups defined by Agol in~\cite[Definition~2.6]{Agol}.}
\end{rmk}

\begin{thm}\label{Wise_Agol} {\rm (see~\cite[Theorem~13.3]{Wise2},~\cite[Theorem A.42]{Agol})} A hyperbolic group is in $\mathcal{Q}\mathcal{V}\mathcal{H}$ if and only if it is virtually compact special.
\end{thm}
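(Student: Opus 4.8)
The plan is to establish the equivalence in two directions, treating it as a packaging of the deep work of Wise and Agol rather than reproving their machinery from scratch. Since by Remark~\ref{quasi-convex_and quasi-isom} a finitely generated quasi-isometrically embedded subgroup of a hyperbolic group is exactly a quasiconvex one, the edge groups $B$ appearing in axioms (2) and (3) of Definition~\ref{QVH} are quasiconvex, and I will use this freely.

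For the implication \emph{virtually compact special $\Rightarrow$ $\mathcal{Q}\mathcal{V}\mathcal{H}$}, I would produce a quasiconvex hierarchy directly from the geometry of the cube complex. Write $G=\pi_1(X)$ with $X$ a compact special cube complex and $G$ hyperbolic (by axiom (4) it suffices to handle a finite index subgroup, so this reduction is harmless). The hyperplanes of a special cube complex are two-sided and embedded, and cutting $X$ along a single hyperplane exhibits $X$ as a graph of spaces, realizing $G$ as an amalgamated product $A\ast_B C$ or an HNN extension $A\ast_B$ whose edge group $B$ is the hyperplane subgroup. Hyperplane subgroups of special cube complexes are convex, hence quasi-isometrically embedded; in the hyperbolic setting this makes $B$ quasiconvex. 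The complexity (measured, say, by the number of hyperplanes or by dimension) strictly drops for the vertex pieces, which are again fundamental groups of compact special cube complexes. Inducting on complexity down to the trivial group, which lies in $\mathcal{Q}\mathcal{V}\mathcal{H}$ by axiom (1), and applying axioms (2)--(4) at each splitting places $G$ in $\mathcal{Q}\mathcal{V}\mathcal{H}$.

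For the reverse implication \emph{$\mathcal{Q}\mathcal{V}\mathcal{H}$ $\Rightarrow$ virtually compact special}, I would argue by induction along the hierarchic construction that the class of hyperbolic virtually compact special groups satisfies the four closure axioms. The base case (the trivial group) and axiom (4) are immediate. The substance is axioms (2) and (3): given $G=A\ast_B C$ (or $A\ast_B$) with $A$ and $C$ hyperbolic and virtually compact special and $B$ quasiconvex, one realizes $A$ and $C$ as fundamental groups of compact special cube complexes in which $B$ is carried by a compact subcomplex mapping in by a local isometry, then glues these along $B$ to obtain a cube complex with fundamental group $G$. This glued complex need not be special, and the heart of the argument is to pass to a finite cover that is. Here I would invoke Wise's Malnormal Special Quotient Theorem together with the canonical completion and retraction, using that quasiconvex subgroups of virtually special hyperbolic groups are separable in order to unglue any offending self-intersections, self-osculations, or inter-osculations of hyperplanes in a finite cover.

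The main obstacle is exactly this last inductive step, namely promoting specialness of the factors to virtual specialness of the amalgam or HNN extension. This is Wise's combination theorem for cube complexes, and its proof genuinely requires the full apparatus of the Malnormal Special Quotient Theorem, wall separation in $\mathrm{CAT}(0)$ cube complexes, and the canonical completion/retraction technology; I would import these as the deep external input rather than reconstruct them. The first direction, by contrast, is more structural, its only nontrivial ingredient being the quasiconvexity of hyperplane subgroups under the hyperbolicity hypothesis.
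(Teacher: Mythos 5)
The paper does not actually prove this statement: Theorem~\ref{Wise_Agol} is imported verbatim from Wise~\cite[Theorem~13.3]{Wise2} and Agol~\cite[Theorem A.42]{Agol}, and the authors' only original contribution is the one-sentence remark following it, namely that although Wise's published statement omits the word ``compact,'' his proof --- induction along the hierarchy with \cite[Theorem~13.1]{Wise2} as the engine --- delivers compactness as well. Your sketch is therefore doing something the paper deliberately declines to do: it reconstructs, in outline, the content of the cited theorems. As a roadmap of the literature it is essentially faithful. The direction ``virtually compact special $\Rightarrow\mathcal{QVH}$'' is indeed obtained by cutting along hyperplanes and inducting on a complexity of the compact special complex, with quasiconvexity of the edge groups coming from convexity of hyperplane carriers plus cocompactness; one caveat is that ``number of hyperplanes'' is not obviously monotone under cutting (a transverse hyperplane can disconnect into several pieces), so the complexity has to be chosen with some care, as Wise does. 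The converse is exactly Wise's Theorem 13.1, and your ``glue compact cores along $B$, then fix specialness in a finite cover'' picture is a cartoon of it: the actual argument produces the cubulation from codimension-one quasiconvex subgroups via Sageev's construction and the Bergeron--Wise boundary criterion, and obtains virtual specialness from the Malnormal Special Quotient Theorem together with separability and double-coset separability of quasiconvex subgroups (the Haglund--Wise criterion); since you explicitly import this machinery as a black box, there is no gap, only compression. Note also that your insistence on \emph{compact} special complexes and compact convex cores throughout the induction is precisely what justifies the paper's remark about restoring the word ``compact'' to Wise's statement. For the purposes of this paper, though, the correct ``proof'' is the citation, and your two pages of outline should collapse to that.
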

We remark that in~\cite[Theorem~13.3]{Wise2}, the word ``compact'' is left out. However, the proof given there, using~\cite[Theorem~13.1]{Wise2} and induction along the hierarchy, yields the result as stated above.


There is a geometric source for virtually compact special groups:

\begin{thm}\label{Agol_1} {\rm (see~\cite[Theorem 1.1]{Agol})}
Let $G$ be a hyperbolic group acting properly and cocompactly on a CAT(0) cube complex $X$.
Then $G$ has a finite index subgroup $F$ acting specially on $X$.
\end{thm}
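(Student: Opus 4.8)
The plan is to produce, by passing to a finite cover, a quotient complex all of whose hyperplanes are well behaved, and then to invoke the Haglund--Wise characterization of specialness; the existence of such a cover is precisely the content of the theorem, so I sketch Agol's argument for it. First, by Haglund--Wise~\cite{HW2} an action on a CAT(0) cube complex is \emph{special} exactly when the images of the hyperplanes of $X$ in the quotient are two-sided, embedded, non-self-osculating, and pairwise non-inter-osculating; equivalently the quotient admits a local isometry into the Salvetti complex of a right-angled Artin group. Passing to a finite-index subgroup I may assume the action is free, so that $C:=X/G$ is a compact nonpositively curved cube complex with $\pi_1(C)\cong G$ and universal cover $X$. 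It then suffices to find a finite cover $\widehat{C}\to C$ whose hyperplanes enjoy these four properties, for then $F:=\pi_1(\widehat{C})$ is a finite-index subgroup of $G$ acting specially on $X$.

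Second, I would record that the stabilizers in $G$ of the hyperplanes of $X$ are quasiconvex subgroups: each is the fundamental group of a convex, hence quasi-isometrically embedded, hyperplane subcomplex, so by Remark~\ref{quasi-convex_and quasi-isom} it is quasiconvex. Moreover, grouped by crossing, these wall stabilizers form an almost malnormal collection of quasiconvex subgroups. Each of the four hyperplane pathologies in a finite cover corresponds to a failure to separate a suitable double coset, or a product of conjugates, of these wall stabilizers; so the problem is reduced to separating the relevant quasiconvex subgroups in finite quotients of $G$, uniformly over all pathologies at once.

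Third, and this is the heart of the matter, one arranges this simultaneous separation by induction on the complexity of $C$ (roughly, a weighted count of hyperplanes), where the inductive step cuts $C$ along a hyperplane and applies Wise's Malnormal Special Quotient Theorem from~\cite{Wise2}. The MSQT, proved via relatively hyperbolic Dehn filling (Groves--Manning, Osin), produces finite-index subgroups of the wall stabilizers whose normal closure yields a hyperbolic, virtually special quotient of lower complexity; pulling back special finite covers along this quotient and combining them with the canonical completion--retraction of Haglund--Wise gives, after a finite coloring argument, a single finite cover in which every wall stabilizer is realized as a virtual retract and the hyperplanes embed disjointly enough to destroy every self-crossing and osculation.

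The main obstacle is exactly this uniform simultaneity together with the apparent circularity of the MSQT, which takes virtual specialness as a hypothesis while we are trying to establish it. The induction must therefore be set up so that each cutting step \emph{strictly} decreases complexity and bottoms out at the trivial group (axiom (1) of $\mathcal{Q}\mathcal{V}\mathcal{H}$), which is vacuously special. The delicate points are controlling the relatively hyperbolic structure so that the Dehn fillings preserve hyperbolicity and introduce neither torsion nor new hyperplane pathologies, and arranging the colorings so that finitely many colors already separate all inter-osculations; getting these two bookkeeping tasks to cooperate with the inductive complexity is where the genuine difficulty of Agol's proof lies.
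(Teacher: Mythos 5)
The paper does not prove this statement at all: Theorem~\ref{Agol_1} is imported verbatim from Agol (Theorem~1.1 of \cite{Agol}) and used as a black box, so there is no internal proof to compare yours against. Judged as a sketch of Agol's own argument, your outline hits the right landmarks (reduction to hyperplane pathologies in a quotient, separation of wall stabilizers, Wise's MSQT via relatively hyperbolic Dehn filling, canonical completion--retraction, a coloring argument, induction on a complexity), but it is a roadmap rather than a proof --- you explicitly defer the entire engine of the argument --- and it contains two genuine missteps.

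First, the reduction ``passing to a finite-index subgroup I may assume the action is free'' is not available: a hyperbolic group is not known to be virtually torsion-free a priori (for cubulated groups this is a \emph{consequence} of Agol's theorem, so assuming it is circular). This is exactly why the theorem is stated in terms of a \emph{special action} on $X$ rather than a special quotient complex, and why the paper's observation immediately after the theorem has to add ``virtually torsion-free'' as a separate hypothesis before concluding that $F\setminus X$ is a special cube complex. Agol's proof must, and does, work with non-free actions throughout. Second, the claim that the hyperplane stabilizers ``grouped by crossing \dots form an almost malnormal collection of quasiconvex subgroups'' is false in general; almost malnormality is precisely what is \emph{not} available at the outset. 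Arranging a setting in which the MSQT applies is the point of the hierarchy, and the new ingredient that makes the induction close up is Agol's weak separation theorem together with the measure-theoretic gluing of colorings (his Lemma~5.1), which your sketch does not identify. So the proposal correctly names the toolbox but misstates the two hypotheses (freeness, malnormality) whose absence is the actual difficulty the cited proof overcomes.
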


We observe that such $G$ is virtually compact special provided that, additionally, $G$ is virtually torsion-free: the finite index subgroup $F$ from Theorem~\ref{Agol_1} acts freely and specially on $X$; thus the quotient $F\setminus X$ ist non-positively curved, and so it is special by \cite[Lemma~9.12]{HW2}.

We summarize some known facts on virtually compact special groups in the following lemma.

\begin{lem}\label{examples_1}
The following groups are virtually compact special:
\begin{enumerate}

\item[{\rm (1)}] Hyperbolic groups in the class $\mathcal{Q}\mathcal{V}\mathcal{H}$.\\
(See {\rm Theorem~\ref{Wise_Agol}}.)
\vspace*{1mm}

\item[{\rm (2)}] Fundamental groups of closed hyperbolic {\rm 3}-manifolds.\\
(This is a combination of results of several authors. See the survey of Friedl~\cite[Theorem 7.13, Figure~6, and around]{Friedl_2}; see the whole~\cite[Section 7]{Friedl_2} for other classes of 3-manifold groups.)
\vspace*{1mm}

\item[{\rm (3)}] Hyperbolic Coxeter groups.\\
(Niblo and Reeves showed in~\cite[Theorem~1]{NR} that any finitely generated Coxeter group~$G$
acts properly on a CAT(0) cube complex $X$.
Moreover, this action is cocompact if $G$ is a hyperbolic Coxeter group \cite[Theorem~5]{NR}.
Haglund and Wise proved in~\cite[Theorem~1.2]{HW1} that any finitely generated Coxeter group $G$ has a finite index torsion-free subgroup
$F$ such that $F\setminus X$ is special.)

\vspace*{1mm}

\item[{\rm (4)}] Virtually torsion-free groups with finite $C'(1/6)$ or $C'(1/4)-T(4)$  presentations.\\
 (Indeed, in~\cite[Theorem 1.2]{Wise1} Wise showed that such groups act properly and cocompactly on CAT(0) cube complexes.
 Then apply {\rm Theorem~\ref{Agol_1}} and the observation after it.)
\vspace*{1mm}


\item[{\rm (5)}] One-relator torsion groups.\\
(By Newman' spelling theorem
(see~\cite[Chapter~IV, Theorem~5.5]{LS}) such groups
have Dehn's presentation, and hence they are hyperbolic; they
lie in the class $\mathcal{Q}\mathcal{V}\mathcal{H}$ by~\cite[Corollary 18.2]{Wise2}).
\vspace*{1mm}

\item[{\rm (6)}] Surface groups.\\
(These groups evidently lie in the class $\mathcal{Q}\mathcal{V}\mathcal{H}$.)
\end{enumerate}
\end{lem}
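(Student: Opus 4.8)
The plan is to handle the six items by two complementary routes, both resting on the theorems assumed above. For items (1), (5), and (6) I would use the algebraic route: establish membership in $\mathcal{Q}\mathcal{V}\mathcal{H}$ and then invoke Theorem~\ref{Wise_Agol}, by which a hyperbolic group lies in $\mathcal{Q}\mathcal{V}\mathcal{H}$ exactly when it is virtually compact special. For items (3) and (4) I would use the geometric route: exhibit a proper cocompact action on a CAT(0) cube complex together with virtual torsion-freeness, and then apply Theorem~\ref{Agol_1} and the observation following it. Item (2) is of a different nature, being an assembly of results from the $3$-manifold literature rather than a single construction, so I would simply cite Friedl's survey~\cite{Friedl_2}.

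Along the algebraic route, item~(1) needs nothing beyond Theorem~\ref{Wise_Agol}. For item~(5), I would first note that Newman's spelling theorem~\cite[Chapter~IV, Theorem~5.5]{LS} gives one-relator torsion groups a Dehn presentation, hence hyperbolicity, and that such groups lie in $\mathcal{Q}\mathcal{V}\mathcal{H}$ by~\cite[Corollary~18.2]{Wise2}; item~(1) then closes the case. For item~(6), surface groups are visibly assembled by the $\mathcal{Q}\mathcal{V}\mathcal{H}$ axioms, as amalgamated products or HNN extensions along quasiconvex (malnormal cyclic) edge groups, so again item~(1) applies.

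Along the geometric route, the two hypotheses must be checked separately. For hyperbolic Coxeter groups, Niblo--Reeves~\cite{NR} supply a proper action on a CAT(0) cube complex, cocompact in the hyperbolic case, while Haglund--Wise~\cite{HW1} supply a finite-index torsion-free subgroup acting with special quotient, so virtual torsion-freeness is automatic. For groups with $C'(1/6)$ or $C'(1/4)-T(4)$ presentations, Wise~\cite{Wise1} provides the proper cocompact cubulation, and virtual torsion-freeness is hypothesized outright. I expect the only genuinely delicate point to be exactly this virtual torsion-freeness: the observation after Theorem~\ref{Agol_1} extracts a \emph{compact} special quotient only because the finite-index subgroup $F$ acts \emph{freely}, which fails without torsion-freeness. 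Hence the geometric route cannot be applied blindly to a class (such as Coxeter groups with torsion) unless a torsion-free finite-index subgroup is produced first, as~\cite{HW1} does.
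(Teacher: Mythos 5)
Your proposal is correct and follows essentially the same route as the paper: items (1), (5), (6) via $\mathcal{Q}\mathcal{V}\mathcal{H}$-membership and Theorem~\ref{Wise_Agol}, items (3) and (4) via cubulation results together with Theorem~\ref{Agol_1} and the subsequent observation (with~\cite{HW1} supplying the special quotient directly in the Coxeter case), and item (2) by citation of Friedl's survey. Your emphasis on virtual torsion-freeness as the delicate hypothesis matches the paper's own remark following the lemma.
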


Note, that if a hyperbolic group is residually finite, then it is virtually torsion free since it has only finitely many conjugacy classes of torsion elements. In this case
the words `virtually torsion free' in (4) can be omitted.

We will use the following theorems.

\begin{thm}\label{herid_CS}\cite[Theorem 1.1]{MZ1} Let $G$ be a hyperbolic group in the class $\mathcal{Q}\mathcal{V}\mathcal{H}$.
Then $G$ is hereditarily conjugacy separable.
\end{thm}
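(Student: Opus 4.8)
The plan is to reduce hereditary conjugacy separability of a hyperbolic $\mathcal{Q}\mathcal{V}\mathcal{H}$-group to the known conjugacy-separability theory of right-angled Artin groups. First I would record a closure observation: by Theorem~\ref{Wise_Agol} every hyperbolic $\mathcal{Q}\mathcal{V}\mathcal{H}$-group is virtually compact special, and both the property of being hyperbolic and the property of being virtually compact special are inherited by finite index subgroups (a finite index subgroup $K\leqslant G$ meets a compact special finite index subgroup $G_0=\pi_1(X)$ in a group $K\cap G_0$ that corresponds to a finite cover of $X$, hence is again compact special). Thus the class of hyperbolic $\mathcal{Q}\mathcal{V}\mathcal{H}$-groups is closed under passing to finite index subgroups, so it suffices to prove that \emph{every} group in the class is hereditarily conjugacy separable; and in fact I would keep track of the full hereditary property throughout, rather than bare conjugacy separability, since the latter is not robust enough to be inherited.

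Next I would pass to an ambient right-angled Artin group. Given such a $G$, Theorem~\ref{Wise_Agol} supplies a finite index subgroup $G_0=\pi_1(X)$ with $X$ a compact special cube complex. The geometric input, due to Haglund and Wise, is the canonical completion-and-retraction construction: through a local isometry of $X$ into the Salvetti complex of the crossing graph of its hyperplanes, $G_0$ embeds into a right-angled Artin group $A$ as a \emph{virtual retract}. I would then invoke Minasyan's theorem that right-angled Artin groups are hereditarily conjugacy separable, which gives a hereditarily conjugacy separable overgroup $A$ of $G_0$.

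The heart of the argument, and the step I expect to be the main obstacle, consists of two inheritance principles that transport the RAAG result first down to $G_0$ and then up to $G$: (a) a virtual retract of a hereditarily conjugacy separable group is again hereditarily conjugacy separable, and (b) hereditary conjugacy separability passes to finite index overgroups. Applying (a) to the retraction onto $G_0\leqslant A$ shows $G_0$ is hereditarily conjugacy separable, and applying (b) to $G_0\leqslant G$ yields the conclusion for $G$. The reason one is forced to carry the hereditary strengthening at every stage is that the obstruction to inheritance is precisely a pair of elements conjugate in the larger group but not in the subgroup; ordinary conjugacy separability is famously neither a commensurability invariant nor inherited by retracts, whereas the hereditary version is engineered, by controlling conjugacy simultaneously inside all finite index subgroups, to be stable under both operations. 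Establishing (a) and (b) is the technical core and would occupy the bulk of the work. I would also stress that the passage through a finite index subgroup cannot be avoided: since $G$ may contain torsion while right-angled Artin groups are torsion-free, one genuinely needs principle (b) to recover $G$ from the special subgroup $G_0$.
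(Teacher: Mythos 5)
First, a point of comparison: the paper does not prove this statement at all --- Theorem~\ref{herid_CS} is quoted verbatim from Minasyan--Zalesskii~\cite[Theorem 1.1]{MZ1}, so there is no ``paper's own proof'' to match; your proposal has to be judged against what is actually needed to establish the result. Your opening moves are sound and do follow the strategy of~\cite{MZ1}: reduce via Theorem~\ref{Wise_Agol} to a finite index compact special subgroup $G_0$, embed $G_0$ as a virtual retract of a right-angled Artin group via the Haglund--Wise canonical completion and retraction, invoke Minasyan's theorem that RAAGs are hereditarily conjugacy separable, and use the fact that virtual retracts of hereditarily conjugacy separable groups are hereditarily conjugacy separable (your principle (a), which is a genuine theorem of Minasyan). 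Up to that point $G_0$ is hereditarily conjugacy separable.

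The gap is your principle (b). Hereditary conjugacy separability is \emph{not} ``engineered'' to pass to finite index overgroups, and this is not a formal consequence of the definition: the hereditary quantifier ranges over finite index subgroups of the given group, so it transfers downward and along (virtual) retracts, but says nothing about a larger group $G\supseteq G_0$. Concretely, if $x\in G\setminus G_0$ (for instance a torsion element, which must occur outside the torsion-free special subgroup whenever $G$ has torsion), the conjugacy class of $x$ in $G$ cannot be analysed inside $G_0$ at all; even for $x,y\in G_0$ one must control $G$-conjugacy, not just $G_0$-conjugacy, in finite quotients. Plain conjugacy separability is known not to pass up (or down) finite index, and the upward passage for the hereditary version is precisely the content of~\cite{MZ1}: it requires Minasyan's Centralizer Condition together with hyperbolic-specific input --- centralizers of elements are quasiconvex (virtually cyclic for infinite-order elements), quasiconvex subgroups of virtually compact special hyperbolic groups are virtual retracts and hence separable, and the relevant double cosets are separable. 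If (b) held as a general inheritance principle, the Minasyan--Zalesskii theorem would follow in a few lines from Agol--Wise plus Minasyan's RAAG paper, which it does not. As written, your argument establishes the conclusion only for the subgroup $G_0$, not for $G$.
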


\begin{thm}\label{virt_retract}\cite[Proposition 4.1(1)]{Bestvina}
Let $G$ be a hyperbolic group in the class $\mathcal{Q}\mathcal{V}\mathcal{H}$. Then every quasiconvex subgroup $H$ in $G$ is a virtual retract of $G$.

\end{thm}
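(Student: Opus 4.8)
The plan is to reduce the statement to the combinatorial machinery of special cube complexes. By Theorem~\ref{Wise_Agol}, the hyperbolic group $G\in\mathcal{Q}\mathcal{V}\mathcal{H}$ is virtually compact special, so it contains a finite index subgroup $G_0=\pi_1(X)$ with $X$ a compact special cube complex. Since finite index covers of special cube complexes are again special, I may pass to the normal core and assume $G_0\trianglelefteq G$ is normal and compact special. Set $H_0:=H\cap G_0$. As $G_0$ has finite index in $G$, the subgroup $H_0$ has finite index in $H$, hence is quasiconvex (quasiconvexity passes to finite index subgroups), and it is normal in $H$ because $G_0$ is normal in $G$.

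Next I would realize $H_0$ geometrically. The group $G_0$ acts freely and cocompactly on the CAT(0) cube complex $\widetilde{X}$, and since $H_0$ is quasiconvex it acts cocompactly on a combinatorially convex subcomplex $C\subseteq\widetilde{X}$ (its cubical convex hull). The quotient $Z:=H_0\backslash C$ is then a compact cube complex with $\pi_1(Z)=H_0$, and convexity of $C$ makes the induced map $Z\to X$ a local isometry. At this point the canonical completion and retraction of Haglund and Wise~\cite{HW2} apply: they produce a finite cover $\widehat{X}\to X$ together with a combinatorial retraction onto $Z$, which on fundamental groups yields a finite index subgroup $G_1$ with $H_0\leqslant G_1\leqslant G_0$ and a retraction $\rho\colon G_1\to H_0$. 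Thus $H_0$ is a virtual retract of $G_0$, and therefore of $G$, since $G_1$ has finite index in $G$ and $\rho$ already exhibits $H_0$ as a retract of $G_1$.

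It remains to promote the virtual retraction from $H_0$ to $H$ itself. Replacing $G_1$ by the intersection $\bigcap_{h\in H}G_1^{h}$ of its finitely many $H$-conjugates (all of which contain $H_0$), I may assume that $H$ normalizes $G_1$ while keeping $\rho$ a retraction onto $H_0$; note also $H\cap G_1=H_0$. I then form the finite index subgroup $\widehat{G}:=H\,G_1\leqslant G$ and define $\widehat{\rho}(hg)=h\,\rho(g)$ for $h\in H$ and $g\in G_1$. Well-definedness follows from $H\cap G_1=H_0$ together with $\rho|_{H_0}=\mathrm{id}$, and a short computation shows that $\widehat{\rho}$ is a homomorphism precisely when $\rho$ is equivariant with respect to the conjugation action of $H$, i.e.\ $\rho(g^{h})=\rho(g)^{h}$. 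Granting this, $\widehat{\rho}\colon\widehat{G}\to H$ is a retraction onto $H$, so $H$ is a virtual retract of $G$.

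The main obstacle is exactly the equivariance $\rho(g^{h})=\rho(g)^{h}$: a retraction produced abstractly need not respect conjugation by the finitely many coset representatives of $H_0$ in $H$. The way through is to exploit that the Haglund--Wise construction is \emph{canonical}, hence natural with respect to isomorphisms of the defining local isometry $Z\to X$; the conjugations by elements of $H$ permute this data, and naturality (after passing to a further $H$-invariant finite index subgroup if necessary) forces the retraction to be $H$-equivariant. The only other technical point, the realization of the quasiconvex $H_0$ as $\pi_1$ of a compact local isometry into $X$, is standard convex-cocompactness in CAT(0) cube complexes.
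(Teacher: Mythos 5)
First, note that the paper does not prove this statement at all: it is imported verbatim from Bestvina's survey (Proposition~4.1(1) there), which in turn rests on the canonical completion and retraction of Haglund and Wise together with Theorem~\ref{Wise_Agol}. So you are really reconstructing the literature proof rather than competing with an argument in the text, and your outline is the standard one: pass to a finite-index compact special $G_0=\pi_1(X)$, realize $H_0=H\cap G_0$ as the fundamental group of a compact, locally isometrically immersed cube complex via the cubical convex hull, and apply canonical completion and retraction to exhibit $H_0$ as a virtual retract. Up to that point your argument is sound, and your algebra for extending a retraction $\rho\colon G_1\to H_0$ to $\widehat\rho\colon HG_1\to H$ (well-definedness from $H\cap G_1=H_0$, homomorphy equivalent to $\rho(g^h)=\rho(g)^h$) is correct.

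The genuine gap is in the promotion from $H_0$ to $H$, and it is larger than the one you flag. In the setup you chose there is nothing for ``naturality'' to act on: Theorem~\ref{Wise_Agol} only hands you an abstract compact special cube complex $X$ with $\pi_1(X)=G_0$, and an element $h\in H\setminus G_0$, even after replacing $G_0$ by its normal core, induces merely an abstract automorphism of $G_0$; it need not be realized by any cubical automorphism of $\widetilde X$, so it does not act on the convex hull $C$, on $Z=H_0\backslash C$, or on the canonical completion, and the equivariance $\rho(g^h)=\rho(g)^h$ cannot even be posed geometrically. To make the argument run you must instead start from a proper cocompact action of all of $G$ on a CAT(0) cube complex (which hyperbolic $\mathcal{Q}\mathcal{V}\mathcal{H}$-groups admit), apply Theorem~\ref{Agol_1} to get a normal finite-index subgroup $F$ with $F\backslash\widetilde X$ special, and take the cubical convex hull of an $H$-orbit, so that $H$ genuinely acts on $C$ compatibly with the $G/F$-action downstairs; only then does canonicity of the Haglund--Wise completion yield an $H/H_0$-action on the completion commuting with the retraction. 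Even in that corrected setup, the equivariance of the retraction is the technical heart of the proof and you assert it rather than establish it. Since the paper itself simply cites the result, the honest options are to do the same or to carry this last step out in detail.
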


\section{Corollaries}

In this section we deduce Corollaries~\ref{Dezember} and~\ref{Dezember1} from Theorem~\ref{hyperb}.
The following lemma is a variant of Lemma 6 from~\cite{ZR}.
We will give its proof for completeness.

\begin{lem}\label{conj_dist} {\rm (see~\cite[Lemma 6]{ZR})} Let $G$ be a conjugacy separable group and let $H$ be a retract of $G$. Then $H$ is conjugacy distinguished.
\end{lem}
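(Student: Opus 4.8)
The goal is to show that if $G$ is conjugacy separable and $H$ is a retract of $G$, then $H$ is conjugacy distinguished: given $g\in G$ not conjugate into $H$, we must produce a finite quotient where the image of $g$ is not conjugate into the image of $H$. Let $f:G\twoheadrightarrow H$ be the retraction, so $f|_H=\mathrm{id}$.

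The plan is to reduce ``conjugate into $H$'' to ``conjugate to the specific element $f(g)$.'' The key observation is that the retraction lets us single out a canonical candidate element. I would first argue the following equivalence: $g$ is conjugate into $H$ in $G$ if and only if $g$ is conjugate to $f(g)$ in $G$. For the reverse direction, if $g$ is conjugate to $f(g)$, then since $f(g)\in H$, clearly $g$ is conjugate into $H$. For the forward direction, suppose $g^{c}=h\in H$ for some $c\in G$. Applying the retraction $f$, which restricts to the identity on $H$, gives $f(g)^{f(c)}=f(g^{c})=f(h)=h$. Thus $h=g^{c}$ is conjugate to $f(g)$ (via $f(c)\in H\leqslant G$), and therefore $g$ itself is conjugate to $f(g)$ in $G$. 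This establishes the equivalence.

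Now I would use the hypothesis. Since $g$ is \emph{not} conjugate into $H$, the equivalence shows $g$ is not conjugate to $f(g)$ in $G$. Because $G$ is conjugacy separable, there is a homomorphism $\varphi:G\to\overline{G}$ onto a finite group such that $\varphi(g)$ and $\varphi(f(g))$ are not conjugate in $\overline{G}$. It remains to check that $\varphi(g)$ is not conjugate into $\varphi(H)$. Here I would invoke the equivalence one more time, but now applied inside $\overline{G}$ with the induced retraction: the map $\overline{f}:\overline{G}\to\varphi(H)$ defined by $\overline{f}(\varphi(x))=\varphi(f(x))$ is a well-defined retraction of $\overline{G}$ onto $\varphi(H)$ (well-definedness follows because $\ker\varphi$ is mapped into itself appropriately, or more directly because $f$ descends to the finite quotient), and $\overline{f}(\varphi(g))=\varphi(f(g))$. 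By the same equivalence in $\overline{G}$, if $\varphi(g)$ were conjugate into $\varphi(H)$ it would be conjugate to $\varphi(f(g))$, contradicting our choice of $\varphi$.

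The main obstacle I anticipate is verifying cleanly that the retraction descends to the finite quotient, i.e.\ that $\overline{f}$ is well defined so that $\overline{f}(\varphi(g))=\varphi(f(g))$. The subtlety is that $\varphi$ need not be injective, so one must confirm that $\varphi(x)=\varphi(y)$ implies $\varphi(f(x))=\varphi(f(y))$. The safest route is to replace $\varphi$ by a quotient factoring through a normal subgroup contained in both $\ker\varphi$ and its $f$-preimage intersections, or simply to re-derive the contradiction directly: suppose $\varphi(g)^{\bar c}=\varphi(h)$ for some $\bar c\in\overline{G}$, $h\in H$; lift $\bar c$ to $c\in G$ and apply $\varphi\circ f$ to get $\varphi(f(g))^{\varphi(f(c))}=\varphi(f(h))=\varphi(h)=\varphi(g)^{\bar c}$, so $\varphi(f(g))$ is conjugate to $\varphi(g)$ in $\overline{G}$, the desired contradiction. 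This direct lifting argument avoids having to prove well-definedness of $\overline{f}$ as a separate lemma.
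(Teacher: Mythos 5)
Your opening equivalence --- $g$ is conjugate into $H$ if and only if $g$ is conjugate to $f(g)$ in $G$ --- is correct and is exactly the mechanism the paper exploits. The gap is in the transfer to the finite quotient, i.e.\ precisely the step you flag and then wave away. Your ``direct lifting argument'' does not close it: from $\varphi(g)^{\bar c}=\varphi(h)$ you only get $g^{c}=hn$ in $G$ for some $n\in\ker\varphi$, and applying $\varphi\circ f$ yields $\varphi(f(g))^{\varphi(f(c))}=\varphi(h)\cdot\varphi(f(n))$. The stray factor $\varphi(f(n))$ disappears only if $f(\ker\varphi)\subseteq\ker\varphi$, which fails for a general retraction and a general finite quotient. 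Worse, the intermediate claim itself is false: the quotient furnished by conjugacy separability need \emph{not} witness that $g$ is not conjugate into $H$. Take $G=\mathbb{Z}\times\mathbb{Z}$, $H=\mathbb{Z}\times 0$, $f$ the first-coordinate projection, $g=(0,1)$; the map $\varphi(a,b)=a+b \bmod 2$ separates the (singleton) conjugacy classes of $g$ and $f(g)=0$, yet $\varphi(g)=1$ lies in $\varphi(H)=\mathbb{Z}/2$. So one must genuinely choose the quotient more carefully, and that choice is the entire remaining content of the lemma.

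Your first suggested repair (shrinking $\ker\varphi$ to an $f$-invariant finite-index normal subgroup, e.g.\ $M=\ker\varphi\cap f^{-1}(\ker\varphi\cap H)$, for which one checks normality, finite index, and $f(M)\leqslant M$) can be made to work, but you would have to carry it out rather than cite it as a fallback. The paper avoids the issue altogether by arguing with subgroups instead of quotients: writing $G=K\rtimes H$ with $K=\ker f$, it supposes for contradiction that $g$ is conjugate into the finite-index subgroup $(G_i\cap K)H\geqslant H$ for \emph{every} finite-index normal $G_i\leqslant G$. The conjugation equation $g^{x_i}=k_ih_i$ then holds exactly in $G$, so applying $f$ gives $g^{x_i}=k_i\,f(g)^{f(x_i)}$ with $k_i\in G_i$, whence $g$ is conjugate to $f(g)$ modulo every $G_i$ and therefore, by conjugacy separability, in $G$ --- contradicting your (correct) equivalence. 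You should either adopt that argument or complete the kernel-shrinking step; as written, your proof is incomplete at its decisive point.
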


{\it Proof.} Let $f:G\rightarrow H$ be a retraction and let $K=\ker(f)$. Then $G=K\rtimes H$.
Let $G_i$,  $i\in I$, be all finite index normal subgroups of $G$.
We set $K_i=G_i\cap K$.
We prove that if $g\in G$ is not conjugate into $H$,
than $g$ is not conjugate into some finite index subgroup of $G$ containing $H$, see Remark~\ref{rmk_CD}.

Suppose the contrary. Then $g$ is conjugate into $K_iH$ for every $i\in I$ (since $K_iH$ has finite index in $G=KH$), i.e., $g^{x_i}=k_ih_i$ for some
$x_i\in G$, $k_i\in K_i$ and $h_i\in H$. This implies $f(g^{x_i})=f(h_i)=h_i$, hence $g^{x_i}=k_if(g)^{f(x_i)}$.
Then $g^{x_i}$ and $f(g)^{f(x_i)}$ have the same images in $G/K_i$, and hence in $G/G_i$.
In particular, $g$ is conjugate to $f(g)$ modulo each $G_i$.
Since $G$ is conjugacy separable, $g$ is conjugate to $f(g)$. Hence, $g$ is conjugate into $H$, a contradiction.
\hfill $\Box$

\medskip

\begin{lem}\label{Dezember}
Let $G$ be a conjugacy separable torsion-free hyperbolic group. If $H_1$ is a retract of~$G$,
then $H_1$ is con-separated from every subgroup of $G$ that is not already conjugate into $H_1$.
\end{lem}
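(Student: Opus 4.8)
The plan is to combine Theorem~A with the conjugacy-distinguishedness of retracts. Let $H_1$ be a retract of $G$ and let $H_2\leqslant G$ be a finitely generated subgroup that is not conjugate into $H_1$. I want to produce a finite index subgroup $D\leqslant G$ containing $H_1$ such that $H_2$ is not conjugate into $D$; by Lemma~\ref{propJuly} this is exactly con-separation. The key dichotomy is whether $H_2$ is \emph{elementwise} conjugate into $H_1$ or not.

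First I would dispose of the elementwise case. Suppose every element of $H_2$ is conjugate into $H_1$. Since $G$ is hyperbolic and $H_1$, being a retract, is quasiconvex by Lemma~\ref{observ}, Theorem~A applies: there is a finite index subgroup $H_2'\leqslant H_2$ with $(H_2')^{c}\leqslant H_1$ for some $c\in G$. Now $H_2$ is finitely generated and contains $H_2'$ of finite index, so $H_2$ is generated by $H_2'$ together with finitely many coset representatives; but I must derive a contradiction with the assumption that $H_2$ itself is \emph{not} conjugate into $H_1$. This is the step where I expect the main obstacle: passing from a finite index subgroup conjugating into $H_1$ to all of $H_2$ conjugating into $H_1$. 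The remedy is to invoke that retracts of torsion-free hyperbolic groups are closed under roots (Lemma~\ref{retracts_and_roots}) and that, more generally, $H_1$ being quasiconvex in a hyperbolic group is almost malnormal / has controlled commensurated behaviour. More precisely, I would argue that $H_2'$ conjugate into $H_1$ forces $H_2$ itself to be elementwise conjugate into a single conjugate $H_1^{c^{-1}}$, and then use the torsion-freeness together with the root-closure of retracts to upgrade elementwise conjugacy of the whole finitely generated group $H_2$ into genuine conjugacy into $H_1$, contradicting the hypothesis. Thus the elementwise case cannot actually arise under our standing assumption.

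In the remaining (generic) case, $H_2$ is \emph{not} elementwise conjugate into $H_1$: there exists $h\in H_2$ that is not conjugate into $H_1$ in $G$. Here I apply Lemma~\ref{conj_dist}: since $G$ is conjugacy separable and $H_1$ is a retract, $H_1$ is conjugacy distinguished. By the reformulation in Remark~\ref{rmk_CD}, there is a finite index subgroup $D\leqslant G$ with $H_1\leqslant D$ and $h$ not conjugate into $D$. If $H_2$ were conjugate into $D$, say $H_2^{g}\leqslant D$, then in particular $h^{g}\in D$, so $h$ would be conjugate into $D$, a contradiction. Hence $H_2$ is not conjugate into $D$, and $D$ witnesses that $H_1$ is con-separated from $H_2$.

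Combining the two cases, $H_1$ is con-separated from every finitely generated subgroup not already conjugate into it; equivalently, by Lemma~\ref{propJuly}, the desired finite quotient separation holds. The delicate point throughout is the first case: it is precisely where Theorem~A is consumed, and the argument succeeds only because quasiconvexity (via Lemma~\ref{observ}) and root-closure of retracts (Lemma~\ref{retracts_and_roots}) together prevent a finitely generated $H_2$ from being elementwise-but-not-actually conjugate into $H_1$ when $G$ is torsion-free hyperbolic.
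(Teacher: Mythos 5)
Your proof is correct and follows essentially the same route as the paper: the paper packages your first case as Corollary~\ref{hyperb_1} (via Corollary~\ref{1}: if $(H_2')^{c}\leqslant H_1$ with $[H_2:H_2']=n$, then $(h^{c})^{n!}\in H_1$ for every $h\in H_2$, and root-closure of the retract $H_1$, Lemma~\ref{retracts_and_roots}, gives $h^{c}\in H_1$, so the elementwise case indeed cannot occur), and your second case is verbatim the paper's argument via Lemma~\ref{conj_dist} and Remark~\ref{rmk_CD}. The only cosmetic difference is that you present a case split instead of quoting the contrapositive of Corollary~\ref{hyperb_1}, and the appeal to almost-malnormality in your first case is unnecessary --- the $n!$-power trick plus root-closure already yields $H_2^{c}\leqslant H_1$ directly.
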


{\it Proof of Lemma~\ref{Dezember} from Corollary~\ref{hyperb_1}.} Let $H_2$ be an arbitrary subgroup of $G$ that is not conjugate into $H_1$.
By Corollary~\ref{hyperb_1}, there is therefore an element
$h\in H_2$ that already is not conjugate into $H_1$. By assumption, $G$ is conjugacy separable. By Lemma~\ref{conj_dist}, the group $H_1$ is conjugacy distinguished in $G$. Thus, there is a finite quotient of $G$
witnessing that $h$ is not conjugate into $H_1$. This quotient then
also witnesses that $H_2$ is not conjugate into $H_1$.\hfill $\Box$

\medskip

The following corollary is the ``heredity'' version of Lemma~\ref{Dezember}.

\begin{cor}\label{Dezember1}
Let $G$ be a torsion-free hyperbolic group with the property~LR.
If $G$ is hereditarily CS, then $G$ is hereditarily SICS and SCS.
\end{cor}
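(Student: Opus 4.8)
The plan is to establish both heredity statements at once by deducing SCS from SICS via Corollary~\ref{reduction_free}, so that the real work is to prove that $G$ is hereditarily SICS. First I would record that all three hypotheses are inherited by finite index subgroups: a finite index subgroup $G_0\leqslant G$ is again torsion-free and hyperbolic, it is conjugacy separable because $G$ is hereditarily CS, and it has property LR. The last point needs a short argument. If $H\leqslant G_0$ is finitely generated, then by LR for $G$ there is a finite index subgroup $G_1\leqslant G$ with $H\leqslant G_1$ together with a retraction $f\colon G_1\twoheadrightarrow H$; restricting $f$ to the finite index subgroup $G_1\cap G_0$ of $G_0$ gives a retraction of $G_1\cap G_0$ onto $H$ (its image lies in $H$ since $H\leqslant G_0$, and it fixes $H$ pointwise), so $H$ is a virtual retract of $G_0$. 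Thus every finite index subgroup $G_0$ is a conjugacy separable, torsion-free, hyperbolic LR-group, and by Corollary~\ref{reduction_free} it suffices to prove that each such $G_0$ is SICS.

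Fix a finite index subgroup $G_0\leqslant G$ and finitely generated subgroups $H_1,H_2\leqslant G_0$ with $H_2$ not conjugate into $H_1$ in $G_0$; by Lemma~\ref{propJuly} I must produce a finite quotient of $G_0$ in which the image of $H_2$ is not conjugate into the image of $H_1$, that is, I must show that $H_1$ is con-separated from $H_2$ in $G_0$. The difficulty is that property LR makes $H_1$ only a \emph{virtual} retract of $G_0$, whereas Lemma~\ref{Dezember} requires $H_1$ to be an honest retract of the ambient group. I would resolve this by descending: choose a finite index subgroup $G_1\leqslant G_0$ containing $H_1$ such that $H_1$ is a retract of $G_1$. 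Since $G_1$ is itself a finite index subgroup of $G$, it is conjugacy separable, torsion-free and hyperbolic, so Lemma~\ref{Dezember} applies and shows that $H_1$ is con-separated in $G_1$.

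It then remains to climb back from $G_1$ to $G_0$, which is exactly what Lemma~\ref{overgroups} (applied to the pair $G_1\leqslant G_0$) provides: con-separation of $H_1$ in $G_1$ upgrades to con-separation of $H_1$ in $G_0$. In particular, since $H_2$ is not conjugate into $H_1$ in $G_0$, the subgroup $H_1$ is con-separated from $H_2$ in $G_0$, and Lemma~\ref{propJuly} turns this into the desired finite quotient; hence $G_0$ is SICS, and Corollary~\ref{reduction_free} then gives that $G_0$ is SCS. As $G_0$ was an arbitrary finite index subgroup, $G$ is hereditarily SICS and hereditarily SCS. The main obstacle is precisely the gap between virtual retracts and retracts: Lemma~\ref{Dezember} cannot be applied to $G$ (or even to $G_0$) directly, so the argument must pass down to $G_1$ and then back up through Lemma~\ref{overgroups}. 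Everything else is the bookkeeping of verifying that torsion-freeness, hyperbolicity, conjugacy separability, and LR all survive passage to finite index subgroups, so that Lemmas~\ref{Dezember} and~\ref{overgroups} can be chained together.
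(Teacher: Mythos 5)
Your proposal is correct and follows essentially the same route as the paper: reduce SCS to SICS via Corollary~\ref{reduction_free}, use property LR to realize $H_1$ as a retract of a finite index subgroup, apply Lemma~\ref{Dezember} there, and climb back up with Lemma~\ref{overgroups}. The only difference is that you spell out explicitly that torsion-freeness, hyperbolicity, hereditary CS, and LR all pass to finite index subgroups, which the paper leaves implicit in its one-line remark that LR is inherited by subgroups.
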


{\it Proof.}
The property~LR is inherited by subgroups.
Therefore and by Corollary~\ref{reduction_free},
it suffices to prove that $G$ is SICS.

We have to show that every finitely generated
subgroup $H_1\leqslant G$ is con-separated in $G$. By assumption,
$H_1$ is a retract of a finite index subgroup $L\leqslant G$.
By Lemma~\ref{overgroups}, it suffices to show that $H_1$ is con-separated
in $L$ from every subgroup of $L$.
Since $L$ is conjugacy separable, we are in the situation of Lemma~\ref{Dezember}
and the claim follows.\hfill $\Box$

\begin{cor}\label{main_cor}
Every torsion-free hyperbolic groups from the class $\mathcal{Q}\mathcal{V}\mathcal{H}$
is hereditarily  quasiconvex-SICS and hereditarily  quasiconvex-SCS. In particular,
the following groups possess these properties:

\begin{enumerate}


\item[(1)] Fundamental groups of closed hyperbolic {\rm 3}-manifolds.

\item[(2)] Torsion-free groups with finite $C'(1/6)$ or $C'(1/4)-T(4)$ presentations.

\item[(3)] Surface groups.
\end{enumerate}





\end{cor}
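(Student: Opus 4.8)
The plan is to derive Corollary~\ref{main_cor} from the preceding machinery, principally Corollary~\ref{Dezember1} together with Theorem~\ref{Wise_Agol}, Theorem~\ref{herid_CS} and Theorem~\ref{virt_retract}. The main point is that for a torsion-free hyperbolic $G\in\mathcal{Q}\mathcal{V}\mathcal{H}$, all three hypotheses of Corollary~\ref{Dezember1} are available, but one must be careful that Corollary~\ref{Dezember1} yields SICS/SCS only for finitely generated subgroups, whereas here we want the \emph{quasiconvex} versions; the bridge is that quasiconvex subgroups are exactly the ones that are virtual retracts by Theorem~\ref{virt_retract}, and the argument of Lemma~\ref{Dezember} only ever uses that $H_1$ is a retract of a finite index subgroup.

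First I would verify the three structural properties for $G$. Hereditary conjugacy separability is immediate from Theorem~\ref{herid_CS}, since every finite index subgroup of a hyperbolic $\mathcal{Q}\mathcal{V}\mathcal{H}$-group is again hyperbolic and in $\mathcal{Q}\mathcal{V}\mathcal{H}$ (by axiom~(4) of Definition~\ref{QVH} applied in reverse, the class is closed under passing to finite index subgroups via the construction, and Remark~\ref{subgr_hyp} keeps us among hyperbolic groups). For property~LR I would \emph{not} claim that $G$ is an LR-group outright, because an LR-group requires every finitely generated subgroup to be a virtual retract, and in these groups not every finitely generated subgroup is quasiconvex (see Remark~\ref{U-Bahn}); rather, Theorem~\ref{virt_retract} gives that every \emph{quasiconvex} subgroup is a virtual retract, which is exactly what the quasiconvex-SICS statement needs.

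Next I would run the argument of Corollary~\ref{Dezember1} with ``finitely generated'' replaced throughout by ``quasiconvex''. Let $H_1$ be a quasiconvex subgroup of $G$. By Corollary~\ref{quasi-convex-SICS} it suffices to prove quasiconvex-SICS, so by Lemma~\ref{propJuly} it is enough to show $H_1$ is con-separated from every quasiconvex $H_2$ not already conjugate into it. By Theorem~\ref{virt_retract}, $H_1$ is a retract of some finite index subgroup $L\leqslant G$; by Lemma~\ref{overgroups} I may work inside $L$. Now $L$ is conjugacy separable (hereditary CS) and torsion-free hyperbolic, so Lemma~\ref{Dezember} applies and shows $H_1$ is con-separated in $L$ from every subgroup of $L$ not conjugate into $H_1$ --- note Lemma~\ref{Dezember} is stated for an \emph{arbitrary} subgroup $H_2$, so the quasiconvexity of $H_2$ is only needed to guarantee $H_2^{g_i}\cap L$ is again a reasonable subgroup, and Lemma~\ref{Dezember} already handles arbitrary subgroups via Corollary~\ref{hyperb_1}. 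The heredity is obtained by observing that finite index subgroups of $G$ are again torsion-free hyperbolic $\mathcal{Q}\mathcal{V}\mathcal{H}$-groups, so the same argument applies verbatim to each of them.

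The main obstacle, and the step I would be most careful with, is the interface between ``quasiconvex'' and ``arbitrary'' in Lemma~\ref{overgroups}: that lemma is stated for finitely generated $H_1,H_2$, and I must check that the conjugates $H_2^{g_i}$ and their intersections with $L$ stay within the scope of Lemma~\ref{Dezember} (which fortunately accepts arbitrary $H_2$). For the concrete examples it then remains only to cite membership in the class: fundamental groups of closed hyperbolic $3$-manifolds are virtually compact special (Lemma~\ref{examples_1}(2)) hence in $\mathcal{Q}\mathcal{V}\mathcal{H}$ by Theorem~\ref{Wise_Agol}, are torsion-free, and likewise for torsion-free $C'(1/6)$ or $C'(1/4)\text{-}T(4)$ groups (Lemma~\ref{examples_1}(4), noting residual finiteness forces virtual torsion-freeness) and surface groups (Lemma~\ref{examples_1}(6)), each of which is torsion-free hyperbolic and in $\mathcal{Q}\mathcal{V}\mathcal{H}$, so the general statement applies.
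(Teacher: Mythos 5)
Your proposal is correct and follows essentially the same route as the paper: the paper's (very terse) proof likewise deduces the statement from Lemma~\ref{Dezember}, with Theorem~\ref{virt_retract} supplying the retract of a finite index subgroup, Theorem~\ref{herid_CS} supplying conjugacy separability there, and Lemma~\ref{overgroups} and Corollary~\ref{quasi-convex-SICS} implicitly doing the remaining glue that you spell out explicitly. The extra care you take at the quasiconvex/arbitrary interface and with heredity is sound and matches what the paper leaves to the reader.
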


{\it Proof.}
The first statement follows from Lemma~\ref{Dezember}, since the assumptions of this lemma are fulfilled by Theorems~\ref{herid_CS} and~\ref{virt_retract}.
The groups in (1)-(3) belong to the considered class.
\hfill $\Box$

\section{Hyperbolic groups and SICS}

The main result in this section is Theorem~\ref{hyperb}.
We use the following result of B.H. Neumann.

\begin{thm}\cite[Lemma~4.1]{BN1}\label{Neumann}
Suppose that $(H_i)_{i\in I}$ is a finite family of subgroups of a group $G$ and $(x_i)_{i\in I}$ is a finite family of elements of $G$ with the property $G=\underset{i\in I}{\cup} H_ix_i$. Then there exists $i\in I$
such that $H_i$ has a finite index in $G$.
\end{thm}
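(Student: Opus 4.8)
The plan is to prove this classical lemma of B.H.\ Neumann by induction on the number $n$ of \emph{distinct} subgroups appearing among the $H_i$ in the covering $G=\bigcup_{i\in I}H_ix_i$; after reindexing we may assume $I=\{1,\dots,m\}$, so the covering consists of $m$ cosets of the form $H_ix_i$.

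In the base case $n=1$, the group $G$ is covered by finitely many cosets of a single subgroup $H$. Since distinct cosets of $H$ are pairwise disjoint, the total number of cosets of $H$ in $G$ is at most $m$, and hence $[G:H]$ is finite, which settles this case.

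For the inductive step, I would suppose the family involves $n\geqslant 2$ distinct subgroups and fix one of them, say $H$. If $[G:H]$ is finite we are immediately done, so assume $[G:H]=\infty$. Reindex so that $H_1=\dots=H_k=H$ are exactly the cosets whose subgroup is $H$. Because $H$ has infinitely many distinct cosets, the finitely many cosets $Hx_1,\dots,Hx_k$ cannot exhaust them, so there is a coset $Hy$ disjoint from each of them. As the full family covers $Hy$ while its $H$-cosets miss $Hy$, every element of $Hy$ must lie in one of the remaining cosets; that is, $Hy\subseteq\bigcup_{i>k}H_ix_i$, and right-translating by $y^{-1}$ gives $H\subseteq\bigcup_{i>k}H_ix_iy^{-1}$.

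The key step, and the step I expect to be the only subtle one, is now to eliminate $H$ from the covering by this translation trick. Substituting the inclusion $H\subseteq\bigcup_{i>k}H_ix_iy^{-1}$ into each coset $Hx_j$ with $j\leqslant k$ yields $Hx_j\subseteq\bigcup_{i>k}H_ix_iy^{-1}x_j$, and therefore
$$G=\bigcup_{j\leqslant k}Hx_j\ \cup\ \bigcup_{i>k}H_ix_i\ \subseteq\ \bigcup_{j\leqslant k}\ \bigcup_{i>k}H_ix_iy^{-1}x_j\ \cup\ \bigcup_{i>k}H_ix_i.$$
The right-hand side is a finite union of cosets involving only the subgroups $H_i$ with $i>k$, none of which equals $H$, so these run over at most $n-1$ distinct subgroups. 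By the inductive hypothesis one of them has finite index in $G$, completing the induction. The point I would verify with care is precisely this elimination step, namely that the translated family still covers all of $G$ and genuinely no longer uses $H$; the remainder is bookkeeping about disjointness of cosets of a fixed subgroup.
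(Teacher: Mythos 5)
Your proof is correct: it is the classical induction on the number of distinct subgroups occurring in the coset covering, with the key elimination step (translating a missed coset $Hy$ back to $H$ and substituting to remove all cosets of an infinite-index $H$ from the cover) carried out properly. The paper itself gives no proof of this statement — it is quoted directly from B.H.\ Neumann's original article — and your argument is essentially Neumann's own, so there is nothing to compare beyond noting that your write-up is a complete and standard proof of the cited lemma.
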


The notion of a hyperbolic group derives from the notion of a hyperbolic space. There are several equivalent definitions. We use:

\begin{defn} {\rm (see~\cite{Br})
Let $(\mathcal{X},d)$ be a geodesic metric space.
To shorten notation, we use $|AB|$ for $d(A,B)$.
Any geodesic segment between $A,B$ is denoted by $[A,B]$.
More generally, any geodesic $n$-gon with vertices $A_1,A_2,\dots,A_n$ is denoted by $[A_1,A_2,\dots,A_n]$.

Let $A,B,C\in \mathcal{X}$. The {\it Gromov inner product} of $B,C$ with respect $A$ is
$$(B,C)_A=\frac{1}{2}\bigl(|AB|+|AC|-|BC|\bigr).$$

Let $\delta\geqslant 0$.
A geodesic triangle $[A_1,A_2,A_3]$ in $\mathcal{X}$ is called {\it $\delta$-thin} if for any vertex $A_i$ and any two
points $P\in [A_i,A_j]$, $Q\in [A_i,A_k]$, where $\{i,j,k\}=\{1,2,3\}$ the following implication is satisfied:
 $$
|A_iP|=|A_iQ|\leqslant (A_j,A_k)_{A_i}\hspace*{2mm} \Longrightarrow  \hspace*{2mm} |PQ|\leqslant \delta.$$
A geodesic metric space $\mathcal{X}$ is called {\it $\delta$-hyperbolic} if every geodesic triangle in
$\mathcal{X}$ is $\delta$-thin.
}\end{defn}

As said above, there are several definition. They all agree on what is considered a hyperbolic space. However, they differ with respect to the hyperbolicity constant. A feature common to all definitions of $\delta$-hyperbolicity is that a $\delta$-hyperbolic space is $\delta'$-hyperbolic for each $\delta'\geqslant\delta$. Since there are only finitely many such definitions used in the literature, we choose a $\delta$ such that our space is $\delta$-hyperbolic in any sense used. This trick allows us to quote results from various sources without worrying which definition is employed. The alternative would be to convert hyperbolicity constants, which we consider more confusing.

Directly from this definition it follows that each side of a $\delta$-thin triangle is contained in the
$\delta$-neighborhood of the union of the other two sides. This can be turned into one of the competing definitions.

We use the following two lemmas about the closeness of a polygonal line in the $\delta$-hyperbolic space
to a geodesic segment connecting the endpoints of this line.

\begin{lem}\cite[Lemma 21]{Olsh_1}\label{polygon}
Let $c\geqslant 14\delta$ and $c_1>12(c+\delta)$, and suppose that a geodesic $n$-gon $[A_1,\dots ,A_n]$
in a $\delta$-hyperbolic metric space satisfies the conditions $|A_{i-1}A_i|>c_1$ for $i=2,\dots ,n$
and $(A_{i-2},A_i)_{A_{i-1}}<c$ for $i=3,\dots ,n$. Then the polygonal line
$\rho=[A_1,A_2]\cup [A_2,A_3]\cup \dots \cup [A_{n-1},A_n]$ is contained in the $2c$-neighborhood
of the side $[A_n,A_1]$, and the side $[A_n,A_1]$ is contained in the $14\delta$-neighborhood of $\rho$.
\end{lem}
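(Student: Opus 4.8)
The plan is to prove the two inclusions separately, in both cases comparing the path $\rho$ \emph{directly} to the single closing geodesic $[A_n,A_1]$, so that no error accumulates with $n$ (the constants $2c$ and $14\delta$ must stay fixed, which rules out a naive induction that merges corners and adds up slack). Fix a nearest-point projection $\pi$ onto $[A_n,A_1]$. The whole argument rests on one local statement at each vertex: because the triangle $[A_{i-2},A_{i-1},A_i]$ is $\delta$-thin and $(A_{i-2},A_i)_{A_{i-1}}<c$, the two sides issuing from $A_{i-1}$ stay $\delta$-fellow-travelling only up to distance about $c$ from $A_{i-1}$ and diverge afterwards; since each side is longer than $c_1>12(c+\delta)$, this forces the broken line to make definite forward progress and never to fold back on itself.

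First I would establish a \emph{monotonicity} statement: the projected vertices $\pi(A_1)=A_1,\pi(A_2),\dots,\pi(A_{n-1}),\pi(A_n)=A_n$ occur in this order along $[A_n,A_1]$, and consecutive ones are separated by a definite amount. The mechanism is that nearest-point projection in a $\delta$-hyperbolic space is coarsely monotone along geodesics, while the divergence of the sides at each $A_{i-1}$ (controlled by the Gromov-product bound) rules out backtracking; the margin $c_1>12(c+\delta)$ guarantees that the forward step dominates the $O(c+\delta)$ slack contributed by each corner. Simultaneously I would show that each vertex is close to the closing side, $|A_i\,\pi(A_i)|\leqslant 2c$, again from $\delta$-thinness together with the angle bound.

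Granting monotonicity and vertex-proximity, the first inclusion follows: each segment $[A_{i-1},A_i]$ runs alongside the subsegment $[\pi(A_{i-1}),\pi(A_i)]$ of the closing side, so it lies in the $2c$-neighborhood of $[A_n,A_1]$; since every comparison is made against the one geodesic $[A_n,A_1]$, the constant stays $2c$ independently of $n$. For the reverse inclusion I would use that monotone, densely-spaced projections cover $[A_n,A_1]$: every point of the closing side lies near some $\pi(A_i)$, hence near the path, and a final application of $\delta$-thinness (using the explicit relation $c\geqslant 14\delta$) upgrades this to the stated $14\delta$-neighborhood bound.

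The main obstacle is the monotonicity step — proving that the projections genuinely march forward with no backtracking. This is where the hypotheses enter in an essential, quantitative way: the long-side condition $c_1>12(c+\delta)$ must be shown to outweigh the per-corner slack of size $O(c+\delta)$ coming from the angle condition, and one must exclude the possibility that the path returns close to an already-traversed portion of $[A_n,A_1]$. The remaining estimates are routine thin-triangle and projection computations whose constants are pinned down by $c\geqslant 14\delta$ and $c_1>12(c+\delta)$.
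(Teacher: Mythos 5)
This lemma is not proved in the paper at all: it is quoted verbatim from Ol'shanskii \cite[Lemma 21]{Olsh_1}, so there is no in-paper argument to compare against; the relevant benchmark is Ol'shanskii's proof, which is an induction on $n$ that cuts off one vertex at a time (replacing $[A_1,A_2]\cup[A_2,A_3]$ by the diagonal $[A_1,A_3]$), verifying at each step that the shortened polygon still satisfies the side-length and Gromov-product hypotheses and that the distance estimates compose without the constants growing with $n$.

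Your proposal has a genuine gap: the entire quantitative content of the lemma is concentrated in the step you yourself flag as ``the main obstacle'' and then do not carry out. You assert that the projections $\pi(A_i)$ to $[A_n,A_1]$ march monotonically forward, that each vertex satisfies $|A_i\,\pi(A_i)|\leqslant 2c$, and that the margin $c_1>12(c+\delta)$ ``outweighs the per-corner slack,'' but none of these is derived. These claims are not routine consequences of $\delta$-thinness applied corner by corner: a priori a middle vertex $A_i$ could be arbitrarily far from the closing side, and its distance to $[A_n,A_1]$ is controlled only through the global geometry of the whole polygon, which is exactly what the lemma establishes. The appeal to ``coarse monotonicity of nearest-point projection along geodesics'' is also circular here, since projection of a path onto a geodesic is coarsely monotone only once one knows the path is a (local-to-global) quasi-geodesic with controlled constants --- and proving that the hypotheses $c\geqslant 14\delta$, $c_1>12(c+\delta)$ yield this with the \emph{specific} constants $2c$ and $14\delta$ (rather than the much worse constants produced by the general local-to-global principle) is precisely the bookkeeping that Ol'shanskii's induction performs. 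As written, your text is a plausible strategy outline, not a proof; to close it you would need either to reproduce the inductive cut-off argument with explicit inequalities, or to prove the monotonicity and vertex-proximity claims directly with the stated numerical bounds.
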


\begin{lem}\label{close} Let $[A,B,C,D]$ be a geodesic rectangle in a $\delta$-hyperbolic metric space. Then the following statements are satisfied.

\vspace*{1mm}
\begin{enumerate}
\item[{\rm (1)}] The polygonal line $[A,B]\cup [B,C]\cup [C,D]$ is contained in the $2\Delta$-neighborhood of $[A,D]$,
where $\Delta=\max\{|AB|,|CD|\}+\delta$.

\vspace*{1mm}

\item[{\rm (2)}] Suppose such that $|BC|>|AB|+|CD|+2\delta$. Let $[B_1,C_1]$ be the subsegment of $[B,C]$ such that $|BB_1|=|AB|+\delta$ and $|C_1C|=|CD|+\delta$. Then $[B_1,C_1]$ is contained in the $2\delta$-neighborhood of $[A,D]$.
\end{enumerate}
\end{lem}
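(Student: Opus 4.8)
The plan is to cut the rectangle by one of its diagonals into two geodesic triangles and then apply $\delta$-thinness to each. For part (1) the crude form of thinness (each side lies in the $\delta$-neighborhood of the union of the other two) suffices, whereas for the sharp constant $2\delta$ in part (2) I will need the comparison (tripod) form of thinness from the definition, together with the triangle inequalities governing the length of the diagonal.

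For part (1), I would fix an auxiliary geodesic $[A,C]$ and work in the triangles $[A,B,C]$ and $[A,C,D]$. First I would show $[A,C]\subseteq N_\Delta([A,D])$: a point $P\in[A,C]$ lies within $\delta$ of $[C,D]\cup[A,D]$ by thinness of $[A,C,D]$; if the nearby point is on $[A,D]$ we are done, and if it is on $[C,D]$ then, since $|CD|\leqslant\Delta-\delta$ and $D\in[A,D]$, the detour through $D$ costs at most $\delta+|CD|\leqslant\Delta$. The two short sides are immediate: every point of $[A,B]$ is within $|AB|\leqslant\Delta$ of $A\in[A,D]$ and every point of $[C,D]$ within $|CD|\leqslant\Delta$ of $D\in[A,D]$. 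Finally a point of $[B,C]$ lies within $\delta$ of $[A,B]\cup[A,C]$ by thinness of $[A,B,C]$, and composing this with the previous estimates gives a bound of at most $\Delta+\delta\leqslant 2\Delta$ (using $\delta\leqslant\Delta$). This settles (1).

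For part (2) I would instead use the diagonal $[B,D]$ and the sharp thinness condition twice, chasing a point $P\in[B_1,C_1]$ to $[A,D]$ through $[B,D]$. The hypothesis $|BC|>|AB|+|CD|+2\delta$ is exactly what makes $[B_1,C_1]$ a genuine subsegment, and the defining distances $|BB_1|=|AB|+\delta$, $|C_1C|=|CD|+\delta$ are calibrated so that $|AB|+\delta\leqslant|BP|\leqslant|BC|-|CD|-\delta$. In the triangle $[B,C,D]$, the upper bound on $|BP|$ forces $|BP|\leqslant(C,D)_B$ (an inequality that unwinds to the triangle inequality $|BC|\leqslant|BD|+|CD|$), so thinness from the vertex $B$ yields $Q\in[B,D]$ with $|BQ|=|BP|$ and $|PQ|\leqslant\delta$. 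In the triangle $[A,B,D]$, the lower bound $|BP|\geqslant|AB|+\delta$ translates into $|DQ|=|BD|-|BP|\leqslant(A,B)_D$ (which unwinds to $|BD|\leqslant|DA|+|AB|$), so thinness from the vertex $D$ yields $R\in[A,D]$ with $|QR|\leqslant\delta$. Hence the distance from $P$ to $[A,D]$ is at most $|PQ|+|QR|\leqslant2\delta$, as required.

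The main obstacle is obtaining the sharp constant $2\delta$ in part (2): the naive double-triangle argument used for (1) leaks an extra term of size $|CD|$ (a point of $[A,C]$ close to $[C,D]$ need not be close to $[A,D]$), so the crude form of thinness cannot give $2\delta$. The resolution is to route through the diagonal $[B,D]$ and to invoke the comparison form of thinness at the two endpoints $B$ and $D$; the only thing to verify is that $P$ and $Q$ fall within the respective Gromov-product ranges, and each such verification collapses to a single triangle inequality for the diagonal $|BD|$. Once the calibration of $B_1$ and $C_1$ is matched to these two triangle inequalities, the two applications of thinness cost $\delta$ apiece and add up to the claimed $2\delta$.
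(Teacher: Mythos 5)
Your proof is correct and follows essentially the same route as the paper: for (2) the paper cuts along the diagonal $[A,C]$ and applies $\delta$-thinness once in each of the triangles $[A,B,C]$ and $[A,C,D]$, which is the mirror image of your argument along $[B,D]$, with the same Gromov-product calibration of $B_1$ and $C_1$ doing the work. The only noteworthy difference is in (1): the paper deduces it from (2) together with the crude estimates that $[A,B]\cup[B,B_1]$ lies within $2\Delta$ of $A$ and $[C_1,C]\cup[C,D]$ within $2\Delta$ of $D$ (tacitly in the regime $|BC|>|AB|+|CD|+2\delta$ where $B_1,C_1$ are defined), whereas your direct argument via the diagonal $[A,C]$ is self-contained and requires no such case distinction.
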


\medskip

{\it Proof.} First we prove (2); see Fig. 1. Let $X\in [B_1,C_1]$. 
Since $|BX|\geqslant |BB_1|=|BA|+\delta$, there exists $Y\in [A,C]$ with $|XY|\leqslant \delta$. Since $|CY|\geqslant |CX|-|XY|\geqslant |CC_1|-\delta=|CD|$,
there exists $Z\in [A,D]$ with $|YZ|\leqslant \delta$.
Then $|XZ|\leqslant 2\delta$.

To prove the statement (1), we note that

{\small $\bullet$}  $[A,B]\cup [B,B_1]$ is contained in the $2\Delta$-neighborhood of $A$,

{\small $\bullet$} $[D,C]\cup [C,C_1]$ is contained in the $2\Delta$-neighborhood of $D$,

{\small $\bullet$} $[B_1,C_1]$ is contained un the $2\delta$-neighborhood of $[A,D]$ by (2).
\hfill $\Box$

\vspace*{-27mm}
\hspace*{-10mm}
\includegraphics[scale=0.7]{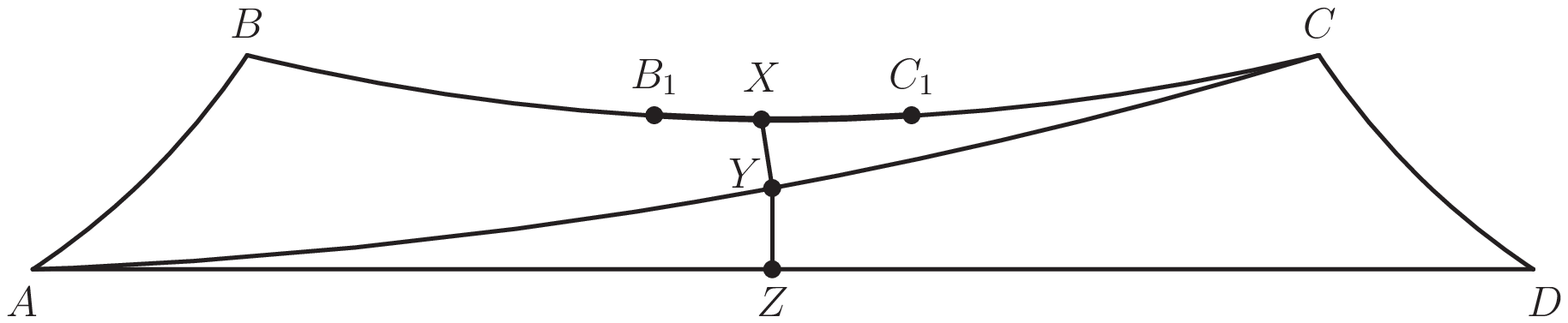}

\vspace*{-14.5cm}

\begin{center}
Figure 1.
\end{center}

We will often use the following lemma.

\begin{lem}\label{Bounded_Hausdorff} (see~\cite[Chapter III.H, Theorem 1.7]{Br}) For all $\delta\geqslant 0$, $\lambda\geqslant 1$, $\epsilon\geqslant 0$ there exists a constant
$R=R(\delta,\lambda,\epsilon)$ with the following property:

If $\mathcal{X}$ is a $\delta$-hyperbolic space, $\rho$ is a $(\lambda,\epsilon)$-quasi-geodesic in $\mathcal{X}$ and $[p,q]$
is a geodesic segment joining the endpoints of $\rho$, then the Hausdorff distance between $[p,q]$ and the image of $\rho$ is less than $R$.
\end{lem}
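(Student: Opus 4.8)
The plan is to prove this as the classical stability theorem for quasi-geodesics (the Morse lemma) in $\delta$-hyperbolic spaces, following the two-step strategy: first bound how far the geodesic $[p,q]$ strays from $\mathrm{im}(\rho)$, then bound how far $\mathrm{im}(\rho)$ strays from $[p,q]$. Before either step I would reduce to the case of a \emph{continuous, rectifiable} quasi-geodesic. A $(\lambda,\epsilon)$-quasi-geodesic $\rho\colon[a,b]\to\mathcal{X}$ need not be continuous, so I would replace it by the path $\rho'$ that agrees with $\rho$ on the integer points of $[a,b]$ and on the endpoints, interpolating by geodesic segments in between. One checks that $\rho'$ is again a $(\lambda,\epsilon')$-quasi-geodesic with $\epsilon'$ depending only on $\lambda,\epsilon$, that $\mathrm{im}(\rho')$ is within Hausdorff distance $\lambda+\epsilon$ of $\mathrm{im}(\rho)$, and that $\rho'$ is rectifiable with the length of each sub-arc controlled linearly by the distance between its endpoints, i.e. $\ell(\rho'|_{[s,t]})\le k_1\,d(\rho'(s),\rho'(t))+k_2$. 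Since the constant $\lambda+\epsilon$ can be absorbed into the final $R$, it then suffices to prove the bound under the assumption that $\rho$ is continuous, rectifiable, and enjoys this linear length control.

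Next I would show that $[p,q]$ lies in a bounded neighborhood of $\mathrm{im}(\rho)$. The key tool is the dyadic subdivision estimate: for any continuous rectifiable path $c$ of length $\ell$ joining the endpoints of a geodesic $[p,q]$, every point of $[p,q]$ lies within $\delta\log_2\ell+1$ of $\mathrm{im}(c)$, proved by repeatedly splitting $c$ at its midpoint and applying $\delta$-thinness to the resulting triangles, halving the relevant length at each step. To convert this into a bound on $D:=\max_{x\in[p,q]}d(x,\mathrm{im}(\rho))$, I would localize: choose $x_0\in[p,q]$ realizing $D$, take the points $y,y'$ on $[p,q]$ at distance $2D$ from $x_0$ on either side (or the endpoints $p,q$ if nearer), and pick points of $\mathrm{im}(\rho)$ within $D$ of each. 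The sub-arc of $\rho$ between these two points, capped off by the two short geodesics to $y$ and $y'$, is a path from $y$ to $y'$ whose length is bounded linearly in $D$ by the length control (its endpoints lie within $6D$ of one another). Applying the dyadic estimate to this localized path yields an inequality of the shape $D\le\delta\log_2(kD)+k'$, and since the right-hand side grows only logarithmically in $D$, this forces $D$ to be bounded by a constant depending only on $\delta,\lambda,\epsilon$.

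Finally I would bound $d(\rho(t),[p,q])$ for all $t$. The point is that $\rho$ cannot make a deep excursion away from $[p,q]$: if some $\rho(t)$ lay at distance $r$ from $[p,q]$, I would take the maximal sub-arc of $\rho$ around $t$ that stays outside the $(D+\mathrm{const})$-neighborhood of $[p,q]$. Using the inclusion $[p,q]\subseteq N_D(\mathrm{im}\,\rho)$ from the previous step to control where this arc may re-approach $[p,q]$, its two endpoints lie within $O(D)$ of $[p,q]$, hence within $O(D)$ of each other; the quasi-geodesic upper inequality then bounds the length of the sub-arc, and therefore $r$, by a constant depending only on $D,\lambda,\epsilon$. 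Taking $R$ to be the maximum of the constants produced in these steps (plus the taming constant $\lambda+\epsilon$) gives the asserted uniform bound on the Hausdorff distance.

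I expect the main obstacle to be closing the self-referential logarithmic inequality for $D$ in the second step. The dyadic estimate only controls distance in terms of the length of the path in play, which is a priori unbounded, so one must first localize to a sub-arc whose length is itself governed by $D$ before the logarithm becomes useful; getting the constants in $D\le\delta\log_2(kD)+k'$ to line up, while carefully tracking how $\lambda$ and $\epsilon$ degrade through the taming reduction, is where the real work lies, the remainder being routine bookkeeping with $\delta$-thinness.
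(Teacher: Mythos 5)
Your proposal is correct and is essentially the standard stability-of-quasi-geodesics argument from the cited source (Bridson--Haefliger, Ch.~III.H, Theorem~1.7): taming to a continuous rectifiable quasi-geodesic, the dyadic subdivision estimate, localization to close the logarithmic inequality for $D$, and then bounding excursions of $\rho$ away from $[p,q]$. The paper itself gives no proof and simply quotes this result, so there is nothing further to compare.
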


The {\it length} of an element $g\in G$ with respect to $S$, denoted $|g|$, is the minimal nonnegative integer number $k$ such that $g=x_1\dots x_k$,
where $x_1,\dots,x_k\in S\cup S^{-1}$. The Cayley graph $\Gamma(G,S)$ is endowed by the metric, for which every
edge has length~1.

A finitely generated group $G$ is called {\it$\delta$-hyperbolic} with respect to a finite generating set $S$
if the Cayley graph $\Gamma(G,S)$ is a $\delta$-hyperbolic metric space.
It is well known that changing the finite generating set leads to changing the hyperbolicity constant $\delta$ (see~\cite{Br}). Thus, the notion a hyperbolic group is well defined.


\begin{defn} {\rm Let $G$ be a group with a fixed finite generating set $S$. For elements $u,v\in G$ and a real number $c>0$, we write $uv=u\underset{c}{\cdot}v$ if
$\frac{1}{2}(|u|+|v|-|uv|)<c$.\break
In other words, the Gromov inner product $(u^{-1},v)_1$ is smaller than $c$.
Also we write $uvw=u\underset{c}{\cdot}v\underset{c}{\cdot}w$ if $uv=u\underset{c}{\cdot}v$ and
$vw=v\underset{c}{\cdot}w$.}
\end{defn}

\begin{lem}\cite[Lemma 2.22]{BV}\label{quasi_1} Let $G$ be a $\delta$-hyperbolic group with respect to a finite generating set.
Let $a,b\in G$, $b\neq 1$. For every integer $k\geqslant 0$ and every $z\in G$, there exists $x\in G$
and $0\leqslant l\leqslant k$ such that $z^{-1}ab^kz=x^{-1}\underset{c}{\cdot}b^{k-l}ab^l\underset{c}{\cdot} x$, where $c=c(a,b)$ does not depend on $z$ and $k$.
\end{lem}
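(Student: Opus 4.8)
The statement to prove is Lemma~\ref{quasi_1}: given a $\delta$-hyperbolic group $G$, elements $a,b$ with $b\neq 1$, an integer $k\geqslant 0$ and an arbitrary conjugator $z$, we must produce a conjugate $x$ and an exponent $0\leqslant l\leqslant k$ so that the word $z^{-1}ab^kz$ equals, up to controlled Gromov-product cancellation $c$, the cyclic rearrangement $x^{-1}\cdot b^{k-l}ab^l\cdot x$, with $c$ independent of $z$ and $k$. The geometric content is that when we conjugate the high power $ab^k$ by $z$, most of the conjugator cancels against the $b^k$-block; what survives is a \emph{cyclic shift} of $a$ through the power of $b$, plus a short leftover which we absorb into the new conjugator $x$.

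**The plan.** The natural approach is to work in the Cayley graph $\Gamma(G,S)$ and consider the geodesic quadrilateral whose sides are labelled by $z^{-1}$, then $ab^k$, then $z$, reading off the path $1\to z^{-1}\to z^{-1}ab^k\to z^{-1}ab^kz$. First I would set up this loop and invoke the thin-quadrilateral estimates of Lemma~\ref{close}, treating the long $b^k$-block as the ``long side'' $[B,C]$ and the two copies of (the geodesic for) $z$ as the short sides whose lengths are equal. Second, I would track how far the conjugator $z$ ``penetrates'' into the $b^k$-block: because $z$ and $z^{-1}$ are fellow-travellers of the long side from opposite ends, the middle portion of the $b^k$-block that lies outside the $2\delta$-neighborhood of these conjugator sides corresponds precisely to the part of $b^k$ that is \emph{not} cancelled. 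The penetration depth, measured in copies of $b$, determines the shift exponent $l$: I would define $l$ to be (roughly) the number of $b$-letters consumed on one side, so that the uncancelled core is $b^{k-l}ab^{l}$ after cyclically moving $a$ into position.

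**Identifying the constant and the new conjugator.** The key point for the ``$c=c(a,b)$ independent of $z,k$'' claim is that the penetration depth into the $b^k$-block is bounded in terms of $|a|$, $|b|$, and $\delta$ \emph{alone}: once $k$ is large the two conjugator sides can only synchronize-cancel against $b^k$ to a depth controlled by the local geometry near the $a$-factor, not by how long $z$ or $b^k$ are. I would make this quantitative using Lemma~\ref{close}\,(2) with $\Delta$ depending on $|a|$ and $\delta$, and using Lemma~\ref{Bounded_Hausdorff} to pass between the geodesic $[p,q]$ and the actual $(\lambda,\epsilon)$-quasi-geodesic spelling of $ab^k$ (where $\lambda,\epsilon$ depend only on $a,b$). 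The residual element $x$ is then read off as the portion of $z$ lying beyond the penetration point, and the two Gromov-product bounds $(x^{-1}\cdot b^{k-l}ab^l)$ and $(b^{k-l}ab^l\cdot x)$ are exactly the statements that the junctions where $x$ meets the core are short, i.e.\ the cancellation there is at most $c$.

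**Main obstacle.** The hard part will be making the penetration depth $l$ \emph{well-defined and symmetric} and proving rigorously that it is bounded independently of $z$ and $k$. Naively the left and right ends of $z$ could penetrate to different depths, and the $a$-factor sits asymmetrically inside the block; so I expect the delicate bookkeeping to be in locating the single integer $l$ for which both junction Gromov-products are simultaneously below a \emph{uniform} $c$, and in verifying that the leftover on each side is genuinely the \emph{same} element $x$ (up to the allowed cancellation) rather than two different residuals. I would handle this by using the cyclic-word / conjugacy structure: since $z^{-1}(ab^k)z$ is a single conjugate, the fellow-travelling of the two conjugator sides with the long block forces their synchronization, and a pigeonhole on the $b$-periods (each of length $|b|$) pins down $l$ with the surplus at both ends bounded by the same constant $c=c(|a|,|b|,\delta)$. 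Quantifying this uniform bound via Lemmas~\ref{close} and~\ref{Bounded_Hausdorff} is where the real work lies; the rest is routine translation between the metric estimates and the Gromov-product notation $u\underset{c}{\cdot}v$.
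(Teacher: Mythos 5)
The paper gives no proof of this lemma at all: it is quoted from Bogopolski--Ventura \cite[Lemma~2.22]{BV}, so your attempt can only be judged on its own merits, and as it stands it has a genuine gap rather than being a variant of an in-paper argument.

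The central quantitative claim on which your plan rests --- that ``the penetration depth into the $b^k$-block is bounded in terms of $|a|$, $|b|$, and $\delta$ alone'' --- is false. Take $z=b^{-j}$ with $j\leqslant k$: then $z^{-1}ab^kz=b^jab^{k-j}$, and the conjugator cancels into the block to depth $j$, which can be as large as $k$. This unbounded cancellation is exactly what the exponent $l$ (ranging over all of $\{0,\dots,k\}$) is designed to absorb; only the \emph{residual} cancellation at the two junctions, \emph{after} the optimal cyclic shift has been extracted, is bounded by $c(a,b)$. Since both your definition of $l$ and your justification of the uniform constant $c$ lean on the bounded-penetration claim, the argument does not close. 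You also flag, but do not resolve, the two genuinely hard points: that a single choice of $l$ makes \emph{both} Gromov products small simultaneously, and that the leftover read off at the left end equals the leftover at the right end (the identity $x^{-1}wx=z^{-1}ab^kz$ forces one and the same $x$). Two smaller problems: your invocation of Lemma~\ref{close}(2) needs the long side to exceed the sum of the two sides labelled by $z$, which fails once $|z|$ is large; and your quasi-geodesic constants for the spelling of $ab^k$ require $b$ to have infinite order, whereas the lemma assumes only $b\neq 1$ (the torsion case is easy --- there $|b^{k-l}ab^l|$ is bounded, so any $x$ works --- but it must be split off).

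A cleaner route, which is the standard proof of statements of this type, is extremal rather than geometric bookkeeping: note that $z^{-1}ab^kz=x^{-1}\bigl(b^{k-l}ab^l\bigr)x$ holds \emph{identically} with $x=b^{k-l}z$ for every $0\leqslant l\leqslant k$, and that replacing $l$ by $l\mp 1$ replaces $x$ by $b^{\pm 1}x$. Choose $l$ minimizing $|x|=|b^{k-l}z|$. If a junction Gromov product were $\geqslant c$ for $c$ large compared with $|a|$, $|b|$, $\delta$ and the quasi-geodesic constants of $i\mapsto b^i$, then by $\delta$-thinness the corresponding end of a geodesic for $x$ would fellow-travel a power of $b$, so shifting $l$ by one (which is admissible precisely when that end of $b^{k-l}ab^l$ still carries a letter $b$) would strictly shorten $x$ --- a contradiction; the boundary cases $l\in\{0,k\}$ and cancellation reaching into the letter $a$ are what force $c$ to depend on $a$ and $b$. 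This produces one $x$ serving both junctions automatically and makes the independence of $c$ from $z$ and $k$ transparent.
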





\medskip

Recall that a group is called {\it elementary} if it contains a cyclic subgroup of finite index.
For every hyperbolic group $G$ and every element $g\in G$ of infinite order,
there exists a unique maximal elementary subgroup containing $g$, which is denoted by $E(g)$,
see~\cite{Gromov, Olsh}. Note that $E(g)$ coincides with the commensurator of the cyclic group $\langle g\rangle$ that coincides with the stabilizer of the unordered pair of endpoints of the quasi-geodesic $i\rightarrow g^i$, $i\in \mathbb{Z}$.

\begin{lem}\label{quasi_geod_oleg}
Let $G$ be a hyperbolic group with a fixed finite generating set $S$.
Suppose that $b\in G$ is an element of infinite order and $a\in G\setminus E(b)$. Then there exist real numbers $\lambda\geqslant 1$ and $\epsilon\geqslant 0$ such that
for all nonnegative integers $k,l$ the curve $b^kab^l$ is a $(\lambda,\epsilon)$-quasigeodesic.
\end{lem}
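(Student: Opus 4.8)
The plan is to view the curve $b^{k}ab^{l}$ as the concatenation of three pieces: the quasigeodesic $i\mapsto b^{i}$ from $1$ to $b^{k}$, a geodesic of length $|a|$ from $b^{k}$ to $b^{k}a$, and the translated quasigeodesic $j\mapsto b^{k}ab^{j}$ from $b^{k}a$ to $b^{k}ab^{l}$. Since $b$ has infinite order, the map $i\mapsto b^{i}$ is a $(\lambda_{0},\epsilon_{0})$-quasigeodesic for constants depending only on $b$ (cyclic subgroups of hyperbolic groups are quasi-isometrically embedded, see~\cite{Br}); let $b^{+}$ and $b^{-}$ be its two endpoints on $\partial G$, where $b^{i}\to b^{+}$ as $i\to +\infty$. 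As left translation is an isometry, both outer pieces are $(\lambda_{0},\epsilon_{0})$-quasigeodesics, and the middle piece has bounded length $|a|$. Hence it suffices to rule out backtracking between the two long pieces, which reduces---after treating the two cases in which both chosen points lie on the same outer piece, and absorbing the bounded middle segment into an additive constant---to the single key estimate: there exist $\mu\geqslant 1$ and $C\geqslant 0$, depending only on $a,b$, with $|b^{s}ab^{j}|\geqslant \frac{1}{\mu}(s+j)-C$ for all integers $s,j\geqslant 0$.

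To prove the key estimate I would pass to Gromov inner products based at $1$. Left-translating by $b^{-s}$ gives $|b^{s}ab^{j}|=d(b^{-s},ab^{j})$, so that $|b^{s}ab^{j}|=|b^{-s}|+|ab^{j}|-2\,(b^{-s},ab^{j})_{1}$. Because $\langle b\rangle$ is quasi-isometrically embedded one has $|b^{-s}|\geqslant s/\lambda_{0}-\epsilon_{0}$, and from $d(1,ab^{j})\geqslant d(a,ab^{j})-|a|$ one gets $|ab^{j}|\geqslant j/\lambda_{0}-\epsilon_{0}-|a|$. Thus the estimate follows at once provided the products $(b^{-s},ab^{j})_{1}$ are bounded uniformly in $s$ and $j$. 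Since $b^{-s}\to b^{-}$ and $ab^{j}\to ab^{+}$ on $\partial G$, this uniform boundedness is exactly the assertion that $b^{-}\neq ab^{+}$: two sequences converging to distinct boundary points of a hyperbolic space have uniformly bounded Gromov product, a bound which I would extract quantitatively from $\delta$-thinness of the relevant geodesic triangles (see~\cite{Br}).

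It remains to verify $b^{-}\neq ab^{+}$, and this is where the hypothesis $a\notin E(b)$ enters; this is the main obstacle. Recall that $E(b)=\mathrm{Stab}\{b^{+},b^{-}\}$ and that for a loxodromic element of a hyperbolic group the stabilizer of a single endpoint already coincides with $E(b)$, as hyperbolic groups have no parabolics (see~\cite{Gromov,Olsh}). Suppose for contradiction that $ab^{+}=b^{-}$; equivalently $a^{-1}b^{-}=b^{+}$. Then the loxodromic element $a^{-1}b^{-1}a$, whose pair of fixed points is $\{a^{-1}b^{+},a^{-1}b^{-}\}=\{a^{-1}b^{+},b^{+}\}$, fixes $b^{+}$, hence lies in $\mathrm{Stab}(b^{+})=E(b)$. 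Its fixed-point pair must therefore be $\{b^{+},b^{-}\}$, which forces $a^{-1}b^{+}=b^{-}$, that is $ab^{-}=b^{+}$. Thus $a$ interchanges $b^{+}$ and $b^{-}$, so $a\in\mathrm{Stab}\{b^{+},b^{-}\}=E(b)$, contradicting the hypothesis. Hence $b^{-}\neq ab^{+}$, completing the key estimate.

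Finally I would assemble the pieces. The key estimate bounds $d(\sigma(t),\sigma(t'))$ from below by a linear function of the arc-length separation of any two points $\sigma(t),\sigma(t')$ lying on different outer pieces, while the quasigeodesicity of each outer piece handles the case of two points on the same piece, and the middle segment of length $|a|$ contributes only to the additive constant. Combining these estimates (using Lemma~\ref{Bounded_Hausdorff} to compare each outer piece with a genuine geodesic if convenient) yields uniform constants $\lambda\geqslant 1$ and $\epsilon\geqslant 0$, independent of $k$ and $l$, for which $b^{k}ab^{l}$ is a $(\lambda,\epsilon)$-quasigeodesic.
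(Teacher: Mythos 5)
Your proposal is correct, and it turns on the same pivot as the paper's proof: the curve can only fail to be a quasigeodesic if $ab^{+}=b^{-}$ in $\partial G$, and that possibility is excluded by $a\notin E(b)$. Both halves are executed differently, however. For the main case the paper considers the bi-infinite curve labelled $\dots bb\cdot a\cdot bb\dots$ and, when its two ends are distinct on the boundary, appeals to the stability of quasi-geodesic rays asymptotic to geodesic rays to put the whole curve at bounded Hausdorff distance from a bi-infinite geodesic; your reduction to the uniform boundedness of the Gromov products $(b^{-s},ab^{j})_{1}$ is more quantitative and sidesteps the (minor, fixable) point that mere closeness to a geodesic does not by itself yield the quasi-geodesic parametrization. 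For the degenerate case the paper does not use boundary dynamics at all: it runs a pigeonhole argument on the two uniformly close asymptotic rays to extract an explicit relation $a^{-1}b^{s}a=b^{-s}$ and then concludes $a\in E(b)$ via the Tits alternative for hyperbolic groups. Your route instead uses that an element fixing the single endpoint $b^{+}$ already lies in $E(b)$; strictly speaking $\mathrm{Stab}(b^{+})$ is the index-at-most-two subgroup $E^{+}(b)\leqslant E(b)$, but only the inclusion into $E(b)$ is needed. That inclusion is standard, yet it is genuinely stronger than the characterization $E(b)=\mathrm{Stab}\{b^{+},b^{-}\}$ recorded in the paper, since it rests on the lemma that two infinite-order elements of a hyperbolic group cannot share exactly one boundary fixed point; you should cite or prove that lemma, and with that reference supplied your argument is complete.
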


{\it Proof.} 
Let $\Gamma$ be the Cayley graph of $G$ with respect to $S$ and let $\partial \Gamma$ be the boundary of $\Gamma$.
We consider a bi-infinite curve $\gamma$ in $\Gamma$ with the label $\dots bbb\cdot a\cdot bbb\dots$.
We write $\gamma=\beta_1\alpha\beta_2$, where the labels of $\beta_1$, $\alpha$, and $\beta_2$ are
$\dots bbb$, $a$, and $bbb\dots$, respectively.
Since $b$ has infinite order, the curves $\beta_1$ and $\beta_2$ are quasi-geodesics with one end on $\partial\Gamma$.

Suppose that $\gamma(-\infty)\neq \gamma(\infty)$. By Lemma 3.2 from~\cite[Chapter III.H]{Br}, there exists a geodesic line $c:\mathbb{R}\rightarrow \Gamma$
with $c(-\infty)=\gamma(-\infty)$ and $c(\infty)=\gamma(\infty)$.
We split $c$ into two rays $c=c_1c_2$.
Since the ray $c_2$ and the quasi-geodesic $\beta_2$ have the same end on the boundary, they are uniformly close
(see Lemma 3.3 from~\cite[Chapter III.H]{Br}). Analogously, $c_1$ and $\beta_1$ are uniformly close.
Therefore, $c$ and $\gamma$ are uniformly close. This implies that $\gamma$ is a quasi-geodesic.

Now suppose that $\gamma(-\infty)=\gamma(\infty)$.
Then the rays $\beta_1^{-1}$ and $\beta_2$ have the same end on the boundary.
Let $A$ and $B$ be their initial points in $\Gamma$.
Since $\beta_1$ and $\beta_2$ are uniformly close, there exists a constant $K>0$ such that
$d(Ab^{-t},Bb^t)\leqslant K$ for all $t\in \mathbb{N}$.
Therefore there exist distinct $t_1,t_2\in \mathbb{N}$ such that $(Ab^{-t_1})^{-1}(Bb^{t_1})=(Ab^{-t_2})^{-1}(Bb^{t_2})$. Note that $a=A^{-1}B$. Setting $s:=t_1-t_2$, we deduce
that $a^{-1}b^{s}a=b^{-s}$.
By Theorem~\cite[Chapter 8, Theorem 37]{Ghys}, a subgroup of a hyperbolic group is ether virtually cyclic,
or contains a nonabelian free group. Therefore $\langle a,b\rangle$ is virtually cyclic, hence $a\in E(x)$,
a contradiction.\hfill $\Box$


\begin{cor}\label{quasi_2} Let $G$ be a $\delta$-hyperbolic group with respect to a finite generating set~$S$.
Suppose that $b\in G$ has infinite order and $a\in G\setminus E(b)$.
Then there exist increasing linear functions $f:\mathbb{R_+}\rightarrow \mathbb{R}$ and $h:\mathbb{R_+}\rightarrow \mathbb{R}_+$ with the following property:

if $\rho$ is the curve in $\Gamma$ with the label
$x^{-1}\underset{c}{\cdot} b^kab^l\underset{c}{\cdot} x$, where $x\in G$, $c\geqslant 14\delta$,
and $k,l\geqslant0$, $k+l\geqslant f(c)$, and $[P,Q]$ is a geodesic segment joining the endpoints of $\rho$,
then $\rho$ is contained in the $h(c)$-neighborhood of $[P,Q]$.
\end{cor}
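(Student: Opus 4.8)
The plan is to realize $\rho$ as a small perturbation of a genuine geodesic quadrilateral and then apply Olshanskii's Lemma~\ref{polygon}, using the two symbols $\underset{c}{\cdot}$ to control the corners. Write $\rho=\rho_1\cup\rho_2\cup\rho_3$, where $\rho_1$ and $\rho_3$ are geodesic segments labeled $x^{-1}$ and $x$ and $\rho_2$ is the curve labeled $b^kab^l$, and denote the successive vertices of $\rho$ by $P,V_1,V_2,Q$. Since $b$ has infinite order and $a\in G\setminus E(b)$, Lemma~\ref{quasi_geod_oleg} provides constants $\lambda\geq 1$ and $\epsilon\geq 0$, depending only on $a,b,S$, for which $\rho_2$ is a $(\lambda,\epsilon)$-quasigeodesic; by Lemma~\ref{Bounded_Hausdorff} it then lies in the $R$-neighborhood of the geodesic $[V_1,V_2]$, where $R=R(\delta,\lambda,\epsilon)$. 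Thus it suffices to control the honest geodesic quadrilateral $[P,V_1,V_2,Q]$ and add $R$ at the end.

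First I would translate the two hypotheses $x^{-1}\underset{c}{\cdot}b^kab^l$ and $b^kab^l\underset{c}{\cdot}x$ into the Gromov-product inequalities required by Lemma~\ref{polygon}. Using $d(u,v)=|u^{-1}v|$ and $|x|=|x^{-1}|$, one computes directly that $(P,V_2)_{V_1}=\frac{1}{2}\bigl(|x|+|b^kab^l|-|x^{-1}b^kab^l|\bigr)$, which is exactly the quantity that the first hypothesis requires to be $<c$; symmetrically, the second hypothesis gives $(V_1,Q)_{V_2}<c$. Setting $A_1,A_2,A_3,A_4:=P,V_1,V_2,Q$, this says $(A_{i-2},A_i)_{A_{i-1}}<c$ for $i=3,4$. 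I would then fix $c_1:=12(c+\delta)+1$, so that $c_1>12(c+\delta)$ and $c_1$ is linear in $c$; since $\rho_2$ is a $(\lambda,\epsilon)$-quasigeodesic of combinatorial length at least $k+l$, we have $|V_1V_2|\geq\frac{1}{\lambda}(k+l)-\epsilon$, so I can choose an increasing linear $f$ with the property that $k+l\geq f(c)$ forces $|V_1V_2|>c_1$ (the middle side is then long).

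The one real obstacle is that the two tail lengths $|A_1A_2|=|A_3A_4|=|x|$ are not under our control and may be shorter than $c_1$, so that Lemma~\ref{polygon}, which requires all of $|A_1A_2|,|A_2A_3|,|A_3A_4|$ to exceed $c_1$, need not apply. I would therefore split into two cases. If $|x|>c_1$, all three sides exceed $c_1$, the Gromov-product conditions hold, and Lemma~\ref{polygon} places the polygonal line $[A_1,A_2]\cup[A_2,A_3]\cup[A_3,A_4]$ in the $2c$-neighborhood of $[A_4,A_1]=[P,Q]$. If instead $|x|\leq c_1$, the tails are short and I would apply Lemma~\ref{close}\,(1) to the rectangle $[P,V_1,V_2,Q]$, for which $\Delta=|x|+\delta\leq c_1+\delta$, putting the same polygonal line in the $2(c_1+\delta)$-neighborhood of $[P,Q]$. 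In either case this geodesic polygonal line lies in the $2(c_1+\delta)$-neighborhood of $[P,Q]$; since $\rho_1=[A_1,A_2]$ and $\rho_3=[A_3,A_4]$ coincide with geodesic sides while $\rho_2$ lies in the $R$-neighborhood of $[A_2,A_3]=[V_1,V_2]$, the whole curve $\rho$ lies in the $h(c)$-neighborhood of $[P,Q]$ with $h(c):=2(c_1+\delta)+R=24c+26\delta+2+R$, an increasing linear function of $c$. Together with the linear $f$ chosen above, this proves the corollary.
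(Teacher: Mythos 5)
Your proposal is correct and follows essentially the same route as the paper: the same decomposition $\rho=\rho_1\rho_2\rho_3$, replacement of $\rho_2$ by a geodesic via Lemma~\ref{quasi_geod_oleg} and Lemma~\ref{Bounded_Hausdorff}, and the same case split on whether $|x|$ exceeds roughly $12(c+\delta)$, applying Lemma~\ref{polygon} in one case and Lemma~\ref{close}\,(1) in the other, arriving at the same $h(c)\approx R+24c+26\delta$. Your explicit verification of the Gromov-product hypotheses of Lemma~\ref{polygon} is a detail the paper leaves implicit.
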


{\it Proof.}
We set $$f(t):=\frac{12(t+\delta)\lambda-|a|+\lambda\epsilon+1}{|b|},\hspace*{10mm} h(t)=R(\delta,\lambda,\epsilon)+24t+26\delta,$$
where $\lambda,\epsilon$ are from Lemma~\ref{quasi_geod_oleg} and $R(\delta,\lambda,\epsilon)$
is from Lemma~\ref{Bounded_Hausdorff}. Let $k,l\geqslant 0$ and $k+l\geqslant f(c)$.

We write $\rho=\rho_1\rho_2\rho_3$, where the labels of $\rho_1$, $\rho_2$, $\rho_3$ are $x^{-1}$,
$b^{k}ab^l$, $x$, respectively. Consider the polygonal line $\rho'=\rho_1\rho_2'\rho_3$,
where $\rho_2'$ is a geodesic connecting the endpoints of $\rho_2$ (see Fig.~2).
By Lemma~\ref{Bounded_Hausdorff}, $\rho_2$ lies in the $R=R(\delta,\lambda,\epsilon)$-neighborhood of $\rho_2'$. Thus, it suffices to prove that $\rho'$ lies in the $(24c+26\delta)$-neighborhood of $[P,Q]$.

Since $\rho_2$ is a $(\lambda,\epsilon)$-quasi-geodesic, we have
$$l(\rho_2')=|b^{k}ab^l|\geqslant \frac{(k+l)|b|+|a|}{\lambda}-\epsilon\geqslant \frac{(f(c))|b|+|a|}{\lambda}-\epsilon>12(c+\delta).$$

If $|x|>12(c+\delta)$, then $\rho'$ satisfies the assumptions of Lemma~\ref{polygon};
hence $\rho'$ is contained in the $2\delta$-neighborhood of $[P,Q]$.

If $|x|\leqslant 12(c+\delta)$, then by Lemma~\ref{close}(1),
$\rho'$ is contained in the $2\Delta$-neighborhood of $[P,Q]$, where
$2\Delta=2(|x|+\delta)\leqslant 24c+26\delta$.
\hfill $\Box$





\vspace*{-30mm}
\hspace*{0mm}
\includegraphics[scale=0.8]{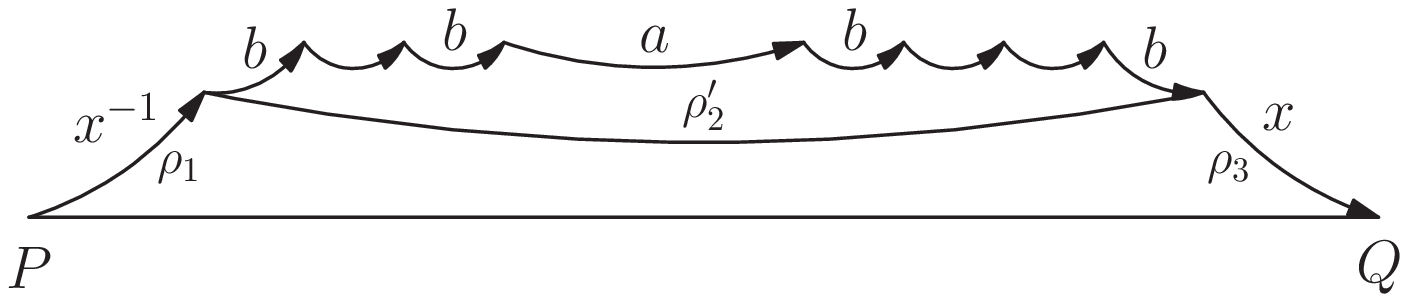}

\vspace*{-17.7cm}

\begin{center}
Figure 2.
\end{center}

\medskip

\begin{lem}\label{quasi_3} Let $G$ be a $\delta$-hyperbolic group with respect to a finite generating set $S$.
Let $H$ be a quasiconvex subgroup of $G$. For every $u,v,b\in G$, where $b$
has infinite order, there exist numbers
$s=s(b)$, $r=r(b)$ (independent of $u,v$), and $n_0=n_0(b,|u|+|v|)$ from $\mathbb{N}$ such that $n_0$ is increasing in the second variable and the following holds:

\medskip

If $ub^nv\in H$ for some $n\geqslant n_0$, then there exists $g\in G$ with $|g|\leqslant r$ such that
\begin{enumerate}
\item[(1)] $g^{-1}b^{s_1}g\in H$ for some $0<s_1\leqslant s$,

\item[(2)] $b^{s_0}v\in gH$ for some $0\leqslant s_0<s_1$.

\end{enumerate}
\end{lem}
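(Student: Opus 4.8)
The plan is to exploit that both $1$ and $w:=ub^nv$ lie in $H$, so that the geodesic $[1,w]$ stays uniformly close to $H$, and to show that a long central portion of the ``$b$-spine'' $V_i:=ub^i$ ($0\le i\le n$) is forced to run uniformly close to $H$ as well. I would view the element as the geodesic rectangle $[1,u,ub^n,w]$, with end sides of length $|AB|=|u|$ and $|CD|=|v|$ and middle side $[u,ub^n]$ of length $|b^n|$. Since $b$ has infinite order, the cyclic line $i\mapsto b^i$ is a $(\lambda,\epsilon)$-quasigeodesic with constants depending only on $b$ (as used already in Lemma~\ref{quasi_geod_oleg}); hence $i/\lambda-\epsilon\le|b^i|\le i|b|$ and, for $n$ large, the hypothesis $|BC|>|AB|+|CD|+2\delta$ of Lemma~\ref{close}(2) holds.

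By Lemma~\ref{close}(2) the central subsegment $[B_1,C_1]$ of $[u,ub^n]$, obtained by trimming $|u|+\delta$ from the $u$-end and $|v|+\delta$ from the $ub^n$-end, lies in the $2\delta$-neighbourhood of $[1,w]$, and by quasiconvexity of $H$ (say $H$ is $\kappa$-quasiconvex) the latter lies in the $\kappa$-neighbourhood of $H$. By Lemma~\ref{Bounded_Hausdorff} the $b$-spine lies within Hausdorff distance $R=R(\delta,\lambda,\epsilon)$ of the geodesic $[u,ub^n]$. Combining these, every spine vertex $V_i$ whose nearest point on $[u,ub^n]$ falls in $[B_1,C_1]$ satisfies $d(V_i,H)\le D_0:=R+2\delta+\kappa$, and the estimate $i/\lambda-\epsilon\le|b^i|\le i|b|$ shows that these are exactly the indices in an integer interval $[i_-,i_+]$ with $i_-=O(|u|)$ and $i_+=n-O(|v|)$. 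I would set $r(b):=D_0$ and $s(b):=|B(1,D_0)|$ (the number of group elements of length $\le D_0$), and define $n_0(b,|u|+|v|)$ to be the least $n$ for which the length hypothesis holds and $i_+-i_-\ge s(b)$; this $n_0$ is visibly increasing in $|u|+|v|$.

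For each valid $i$ choose $h_i\in H$ with $|e_i|\le D_0$, where $e_i:=h_i^{-1}ub^i$. Among the $s(b)+1$ consecutive valid indices $i_-,\dots,i_-+s(b)$, two must give the same value, say $e_i=e_j$ with $i<j$ and $j-i\le s(b)$; writing this out yields $ub^{j-i}u^{-1}=h_jh_i^{-1}\in H$. Multiplying the hypothesis $w=ub^nv\in H$ on the left by $(ub^{j-i}u^{-1})^{-1}\in H$ gives $ub^{n-(j-i)}v\in H$, and since $(ub^nv)^{-1}(ub^{n-(j-i)}v)=v^{-1}b^{-(j-i)}v$, we conclude $v^{-1}b^{j-i}v\in H$ (using $H=H^{-1}$). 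Thus the subgroup $T_v:=\{t\in\Z:v^{-1}b^tv\in H\}$ is nontrivial; let $s_1$ be its positive generator, so $0<s_1\le j-i\le s(b)$ and $v^{-1}b^{s_1}v\in H$.

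Finally I would produce $g$ from a single valid vertex, say $i^\ast:=i_-$. From $V_{i^\ast}=h_{i^\ast}e_{i^\ast}$ and $w\in H$ one computes $b^{n-i^\ast}v=V_{i^\ast}^{-1}w=e_{i^\ast}^{-1}(h_{i^\ast}^{-1}w)$, so $g:=e_{i^\ast}^{-1}$ lies in $b^{n-i^\ast}vH$ and satisfies $|g|\le D_0=r(b)$. The delicate point is to reduce the exponent below $s_1$: since $v^{-1}b^{s_1}v\in H$ we have $b^{s_1}vH=vH$, so the coset $b^{m}vH$ depends only on $m \bmod s_1$; taking $s_0:=(n-i^\ast)\bmod s_1\in[0,s_1)$ keeps $g\in b^{s_0}vH$, which is (2). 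Writing $g=b^{s_0}vh_0$ with $h_0\in H$ then gives $g^{-1}b^{s_1}g=h_0^{-1}(v^{-1}b^{s_1}v)h_0\in H$, which is (1). The main obstacle is the uniform (independent of $u,v,n$) geometric estimate of the second paragraph, trapping the central spine near $H$; once that is in place, the pigeonhole and the coset bookkeeping are routine.
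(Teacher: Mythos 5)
Your proof is correct and follows essentially the same route as the paper: trap the middle portion of the $b$-spine near $H$ using the geodesic rectangle together with Lemma~\ref{close}(2), Lemma~\ref{Bounded_Hausdorff} and quasiconvexity, then pigeonhole on the short connectors from spine vertices to $H$. The only difference is cosmetic bookkeeping at the end: the paper reads $g^{-1}b^{s_1}g\in H$ directly off two spine points sharing the same connector $g$ and then applies the division algorithm to the exponent, whereas you pass through $v^{-1}b^{s_1}v\in H$ and the periodicity of the cosets $b^{m}vH$ --- the two are equivalent.
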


\medskip

{\it Proof.}
We choose minimal words representing $u,b,v$ in the alphabet $S$ and denote them by the same symbols.
First we define some constants which we will use later.
\begin{enumerate}
\item[{\small $\bullet$}] Let $[b^i,b^{i+1}]$ be the geodesic segment in $\Gamma(G,S)$ labelled by $b$. The curve $$\underset{i\in \mathbb{Z}}{\cup} [b^i,b^{i+1}]$$ is a $(\lambda,\epsilon)$-quasi-geodesic, where $\lambda,\epsilon$ depend only on $b$.

\medskip




\item[{\small $\bullet$}] Let $R=R(\delta,\lambda,\epsilon)$ be the constant defined in Lemma~\ref{Bounded_Hausdorff} and $\epsilon_1$ be the quasiconvexity constant of $H$. We set $r(b)=R+|b|+2\delta+\epsilon_1$.

\medskip

\item[{\small $\bullet$}] Let $\Delta> 2\delta$ be an arbitrary real number.
We set $n_{\Delta}$ to be the minimal integer such that the following inequality is satisfied:
$$\frac{n_{\Delta}|b|}{\lambda}-\epsilon>3(|u|+|v|+\Delta).$$
\end{enumerate}

Below we will choose an appropriate $\Delta$ and set $n_0:=n_{\Delta}$. Until this moment $\Delta$
is as above.
Let $n\geqslant n_{\Delta}$. We define the following curves in the Cayley graph $\Gamma(G,S)$:

\medskip

--  the geodesic $\alpha$ from $1$ to $u$ with the label $u$,

--  the curve $\beta$ from $u$ to $ub^n$ with the label $b^n$,

--  the geodesic $\gamma$ from $u{b^n}$ to $ub^nv$ with the label $v$.

\medskip

Let $\beta'$ be a geodesic from $u$ to $ub^n$ and $\beta''$ be a geodesic from $1$ to $ub^nv$ (see Fig.~3).
 Since $\beta$ is a $(\lambda,\epsilon)$-quasigeodesic, we have $$l(\beta')\geqslant \frac{l(\beta)}{\lambda}-\epsilon=\frac{n|b|}{\lambda}-\epsilon >
 3(|u|+|v|+\Delta).\eqno{(7.1)}$$

By Lemma~\ref{close}(2), applied to the geodesic rectangle
with the sides $\alpha,\beta',\gamma$, and $\beta''$,
the middle third of $\beta'$ is contained in the $2\delta$-neighborhood of $\beta''$.
Therefore, for each point $B$ in the middle third of $\beta'$, there exists a point $C$ in $\beta''$ with $|BC|\leqslant 2\delta$. Since the geodesic $\beta'$ lies in the $R$-neighborhood of the quasigeodesic $\beta$, there exists $A=ub^t$ in $\beta$ such that $|AB|\leqslant R+|b|$.
Since the geodesic $\beta''$ lies in the $\epsilon_1$-neighborhood of $H$, there exists $D\in H$ such that $|CD|\leqslant \epsilon_1$.
Thus $$|AD|\leqslant R+|b|+2\delta+\epsilon_1=r(b).$$

\vspace*{-22mm}
\hspace*{-10mm}
\includegraphics[scale=0.7]{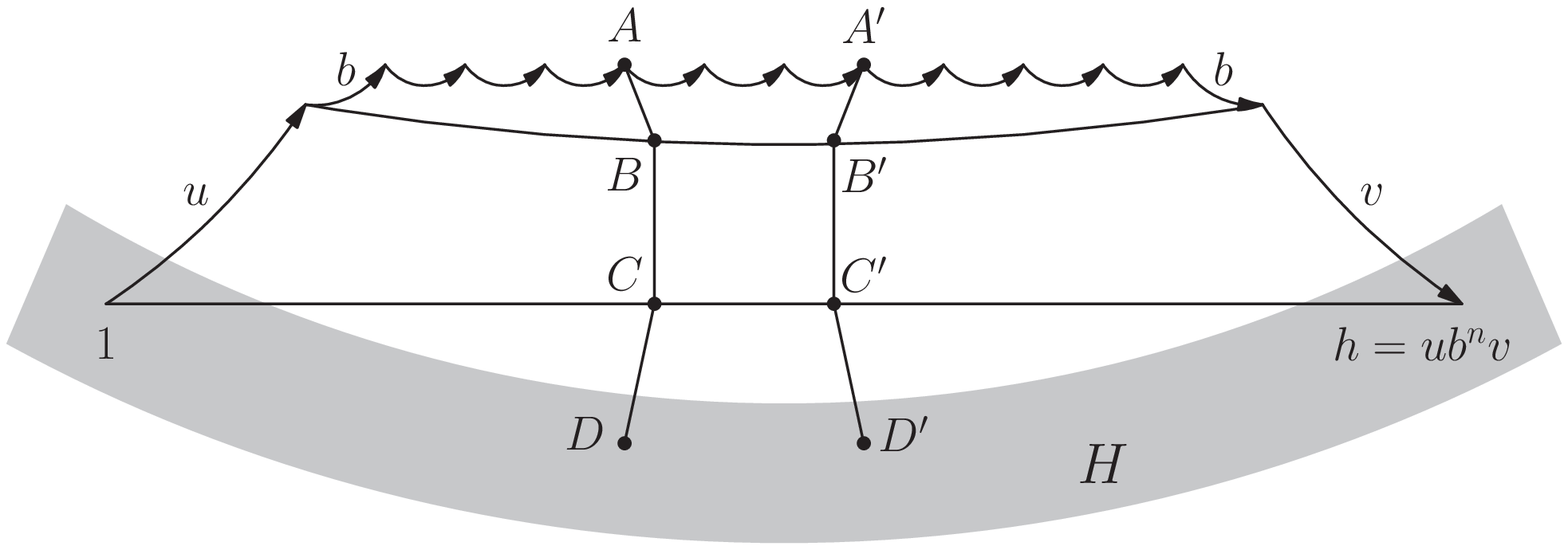}

\vspace*{-12cm}

\begin{center}
Figure 3.
\end{center}

\medskip

By (7.1), the length of the middle third of $\beta'$ is larger than $\Delta$.
If we take $\Delta\geqslant 1+N$, where $N$ is the number of words in the alphabet $S^{\pm}$ with length up to $r(b)$, then  the labels of geodesics $[A,D]$ will repeat when $B$ moves along a segment of length $\Delta$ in the middle third of $\beta'$.
We take $\Delta:=2(R+|b|+1)(2+N)$ to guarantee that there exists a repetition
${\rm label}([A,D])={\rm label}([A',D'])$ with the additional property $$2(R+|b|)<|BB'|< \Delta-2(R+|b|).$$
This implies $0<|AA'|<\Delta$.

Thus, there exist two points $A=ub^t$ and $A'=ub^{t'}$ on $\beta$, and two points $D,D'$ in $H$,
and a constant $s=s(b)$ such that $0<t'-t<s$ and the geodesics $[A,D]$ and $[A',D']$ represent the same element $g\in G$ with $|g|\leqslant r(b)$. The label of the corresponding path $DAA'D'$ is $g^{-1}b^{s_1}g$,
where $s_1=t'-t$. Since $D,D'\in H$, we have property (1).

Then $D=ub^tg\in H$. Write $n-t=ks_1+s_0$ for some integers $k$ and $s_0$ with $0\leqslant s_0<s_1$.
Then $$ub^nv=ub^tg\cdot (g^{-1}b^{s_1}g)^k\cdot g^{-1}b^{s_0}v.$$
Since the elements $ub^nv$, $ub^tg$, and $g^{-1}b^{s_1}g$ lie in $H$, we have $g^{-1}b^{s_0}v\in H$,
and property (2) follows. This shows, that we can take $n_0:=n_{\Delta}$. \hfill $\Box$

\begin{thm}\label{hyperb}
Let $G$ be a hyperbolic group, let $H_1$ be a quasiconvex subgroup of $G$, and let $H_2$
be an arbitrary subgroup of $G$. Suppose that $H_2$ is elementwise conjugate into $H_1$. Then
there exists a finite index subgroup of $H_2$ which is conjugate into~$H_1$.
\end{thm}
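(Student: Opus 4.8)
The plan is to recast the desired conclusion group-theoretically and then to collapse an a priori infinite covering of $H_2$ into a finite one, so that Neumann's Theorem~\ref{Neumann} can be applied. For $g\in G$ put $K_g:=H_2\cap gH_1g^{-1}$; this is a subgroup of $H_2$, and $g^{-1}K_gg\leqslant H_1$, so each $K_g$ is conjugate into $H_1$. The hypothesis that $H_2$ is elementwise conjugate into $H_1$ says exactly that every $h\in H_2$ lies in $gH_1g^{-1}$ for some $g=g(h)$, i.e.\ $H_2=\bigcup_{g\in G}K_g$. The whole statement therefore reduces to the following \emph{Main Claim}: there is a \emph{finite} set $\Phi\subseteq G$ with $H_2=\bigcup_{g\in\Phi}K_g$. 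Indeed, granting this, $H_2$ is a finite union of its own subgroups $K_g$, $g\in\Phi$ (each a coset $K_g\cdot 1$), so Theorem~\ref{Neumann} applied inside $H_2$ yields some $g\in\Phi$ with $[H_2:K_g]<\infty$; that $K_g$ is the required finite index subgroup of $H_2$ conjugate into $H_1$, and the passage to Neumann is then purely formal.

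I would first dispose of trivialities and fix a good handle on $H_2$. If $H_2$ is finite the trivial subgroup already works, so I may assume $H_2$ infinite, whence it contains an element $b$ of infinite order (every infinite subgroup of a hyperbolic group does). Since conjugating $H_2$ changes neither the hypothesis nor the shape of the conclusion, I may also perform an initial conjugation of $H_2$ and assume $b\in H_1$. The crucial feature is that the conjugator bound $|g|\leqslant r(b)$ in Lemma~\ref{quasi_3} depends only on $b$; accordingly I take $\Phi$ to be a suitable finite enlargement of the ball of radius $r(b)$ in $G$, a finite set that does \emph{not} depend on the particular $h\in H_2$ being conjugated. This uniform length bound is what will furnish the finiteness in the Main Claim.

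To prove the Main Claim I would fix an arbitrary $h\in H_2$ and feed the long elements $hb^n\in H_2$, which are conjugate into $H_1$ by hypothesis, into the quasi-geodesic machinery. Using Lemma~\ref{quasi_1} I replace a conjugator $z$ with $z^{-1}hb^nz\in H_1$ by one producing no cancellation, rewriting the resulting element of $H_1$ in the normal form $x^{-1}\underset{c}{\cdot}b^{\,n-l}hb^l\underset{c}{\cdot}x$ with $c=c(h,b)$ fixed and $0\leqslant l\leqslant n$. For $n$ large this is an element $ub^mv\in H_1$ with $m=n-l$ large, so Lemma~\ref{quasi_3} applies and yields $g$ with $|g|\leqslant r(b)$ together with relations $g^{-1}b^{s_1}g\in H_1$ and $b^{s_0}v\in gH_1$ with bounded exponents; applying the same lemma to the inverse element $v^{-1}b^{-m}u^{-1}\in H_1$ (note $b^{-1}$ also has infinite order) controls the opposite side. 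Peeling off the bounded powers of $b$ and the outer conjugators on both sides, and using $b\in H_1$ to absorb the surviving powers of $b$, one arrives at $h\in gH_1g^{-1}$ for a bounded $g$; since $|g|\leqslant r(b)$ we have $g\in\Phi$ after the chosen enlargement, i.e.\ $h\in K_g$. As $h$ was arbitrary, $H_2=\bigcup_{g\in\Phi}K_g$.

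The main obstacle is precisely this last reduction: securing a \emph{uniform} bound on the conjugator over all $h\in H_2$, even though $H_2$ need not be finitely generated. The length bound $r(b)$ is uniform and is what rescues the argument, but the threshold $n_0$ in Lemma~\ref{quasi_3} grows with $|u|+|v|$, hence with $|h|$; so one must choose $n=n(h)$ large depending on $h$ while verifying that the extracted conjugator nevertheless lands in the fixed finite set $\Phi$. One must also account honestly for the two-sided contribution (the factors $x$ and $b^l$), so that it is genuinely $h$, and not merely a translate $b^{s_0}h$ of it, that gets conjugated into $H_1$ by a bounded element. Handling this bookkeeping, together with the degenerate situation in which the endpoints coincide (the case $a\in E(b)$ in Lemma~\ref{quasi_geod_oleg}, where the structure of $E(b)$ and the normalizer of $\langle b\rangle$ must be invoked directly), is where the real work lies.
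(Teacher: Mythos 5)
Your skeleton is the right one (feed the elements $ab^{n}$ into Lemmas~\ref{quasi_1} and~\ref{quasi_3}, exploit that the bound $r(b)$ is uniform in $a$, finish with Theorem~\ref{Neumann}), but two steps in the middle do not go through as written.

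First, your Main Claim --- that $H_2$ is a finite union of \emph{subgroups} $K_g=H_2\cap gH_1g^{-1}$ --- is stronger than what the machinery produces, and the step you use to reach it is broken. What Lemma~\ref{quasi_3} ultimately yields is $g^{-1}b^{p}ab^{q}g\in H_1$ with $p,q$ bounded, i.e.\ $a\in (b^{-p}g)H_1(b^{-p}g)^{-1}\cdot b^{-(p+q)}$: membership in a \emph{coset} of a boundedly conjugated copy of $H_1$, not in the conjugate itself. Your proposal to ``use $b\in H_1$ to absorb the surviving powers of $b$'' does not work: to absorb $b^{-(p+q)}$ into $wH_1w^{-1}$ with $w=b^{-p}g$ you would need $g^{-1}b^{p+q}g\in H_1$, and knowing $b\in H_1$ is irrelevant because the offending power is conjugated by $g$, not by $1$; all you have is $g^{-1}b^{s_1}g\in H_1$, and $s_1$ need not divide $p+q$. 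The paper therefore settles for covering $H_2$ by finitely many cosets $(z^{-1}H_1z\cap H_2)\cdot b^{-t}$ together with $E(b)\cap H_2$, and invokes Theorem~\ref{Neumann} in its coset form --- which is exactly how it is stated. (Your $E(b)$ case is also handled there for free, by simply including $E(b)\cap H_2$ as one more set in the union: if it has finite index then $\langle b\rangle$ does, and $\langle b\rangle$ is conjugate into $H_1$.) Your reduction to a union of subgroups, followed by the ``purely formal'' passage to Neumann, is where the argument breaks.

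Second, your application of Lemma~\ref{quasi_3} is circular. You feed it $ub^{m}v$ with $u=x^{-1}$ and $v$ containing the conjugator $x$ coming from Lemma~\ref{quasi_1}; but $|x|$ is not controlled and depends on $n$, so the threshold $n_0(b,|u|+|v|)$ depends on $n$ and cannot be met by ``taking $n$ large''. (Also $m=n-l$ need not be large, since $l$ can equal $n$; one must argue that one of $l$, $n-l$ is at least $n/2$ and treat that side.) The missing ingredient is precisely Corollary~\ref{quasi_2} combined with the quasiconvexity of $H_1$: the whole curve labelled $x^{-1}b^{n-l}ab^{l}x$ stays within $h(c)+\epsilon_1$ of $H_1$, which produces \emph{short} elements $u,v$ (of length at most $h(c)+\epsilon_1$, independent of $x$ and $n$) with $ub^{l}v\in H_1$ and $v^{-1}x\in H_1$; only then is the hypothesis $l\geqslant n_0(b,|u|+|v|)$ attainable, and the extra datum $v^{-1}x\in H_1$ is what lets one transport the conclusion back to $a$ itself. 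This geometric step is the heart of the paper's proof and is absent from your outline.
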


\medskip

{\it Proof.} We fix a finite generating set $S$ of $G$. Let $\epsilon_1$ be the quasiconvexity constant for $H_1$.
If all elements of $H_2$ have finite order, then $H_2$ is finite
(see~\cite[Section 8, Theorem 37]{Ghys}), and the statement is trivially valid.
Therefore we suppose that $H_2$ contains an element $b$ of infinite order.
Let $a$ be an arbitrary element in $H_2\setminus E(b)$.
Then for every $k\in \mathbb{N}$, there exists $z_k\in G$ such that $z_k^{-1}ab^kz_k\in H_1$.
By Lemma~\ref{quasi_1}, there exist $x_k\in G$ and $0\leqslant l\leqslant k$ such that
$$x_k^{-1}\underset{c}{\cdot}b^{k-l}ab^l\underset{c}{\cdot} x_k\in H_1$$
for some $c=c(a,b)$. We denote this element by $h_k$ and consider the curve $\gamma_k$
starting at 1 and ending at $h_k$, and having the label $x_k^{-1}b^{k-l}ab^lx_k$ (see  Fig.~3).\break
By Corollary~\ref{quasi_2}, if $k\geqslant f(c)$, then
each point of $\gamma_k$ is at distance at most $h(c)$ from the geodesic $[1,h_k]$,
and hence at distance at most $h(c)+\epsilon_1$ from~$H_1$.



\vspace*{-20mm}
\hspace*{-10mm}
\includegraphics[scale=0.7]{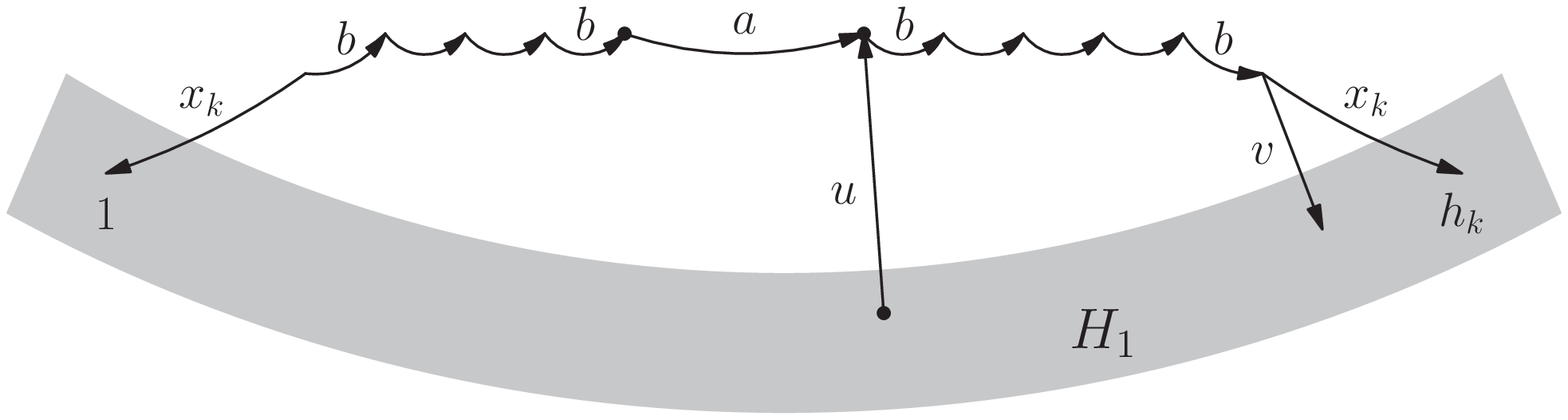}

\vspace*{-13cm}

\begin{center}
Figure 4.
\end{center}

\medskip

We take $k\geqslant f(c)$. Then $k-l$ or $l$ is at least $k/2$. Without loss of generality,
we assume that $l\geqslant k/2$.
Since each point of $\gamma_k$ is at distance at most $h(c)+\epsilon_1$ from $H_1$, there exist $u,v\in G$
such that $$ub^lv\in H_1\hspace*{2mm} {\rm and}\hspace*{2mm} |u|,|v|\leqslant h(c)+\epsilon_1, \hspace*{2mm} {\rm and}\hspace*{2mm} v^{-1}x_k\in H_1$$ (see Fig. 4).
Increasing $k$, we may assume that $l$ is sufficiently large, i.e.,
$$l\geqslant n_0(b,2(h(c)+\varepsilon_1)).$$
By Lemma~\ref{quasi_3}, there exists $g\in G$ with $|g|\leqslant r(b)$ such that

\medskip
\begin{enumerate}
\item[(1)] $g^{-1}b^{s_1}g\in H_1$ for some $0<s_1\leqslant s(b)$,

\item[(2)] $b^{s_0}v\in gH_1$ for some $0\leqslant s_0<s_1$.
\end{enumerate}

\medskip
\noindent
From this and $v^{-1}x_k\in H_1$, we deduce

\begin{enumerate}
\item[(3)] $b^{s_0}x_k=gh_k'$ for some $h_k'\in H_1$.
\end{enumerate}
We have $$H_1\ni x_k^{-1}b^{k-l}ab^lx_k=(h_k'^{-1}g^{-1}b^{s_0})\cdot (b^{k-l}ab^l)\cdot (b^{-s_0}gh_k').$$
Therefore $$g^{-1}b^{k-l+s_0}ab^{l-s_0}g\in H_1.$$
Using (1), we get $g^{-1}b^pab^qg\in H_1$ for some $p,q$ with $0\leqslant p,q\leqslant s(b)$. Thus,
$$a\in b^{-p}gH_1g^{-1}b^p\cdot b^{-(q+p)}.$$
Since $a$ is an arbitrary element in $H_2\setminus E(b)$ and $|g|\leqslant r(b)$, we have
$$H_2\subseteq \underset{(t,z)\in M}{\bigcup}(z^{-1}H_1z\cdot b^{-t})\cup E(b),$$
where $M=\{(t,z)\in \mathbb{Z}\times G\,|\, 0\leqslant t\leqslant 2s(b),\, |z|\leqslant s(b)\cdot |b|+r(b)\}$.
Since $b\in H_2$, we have
$$H_2= \underset{(t,z)\in M}{\bigcup}((z^{-1}H_1z\cap H_2)\cdot b^{-t})\cup (E(b)\cap H_2).$$
Since the set $M$ is finite, we deduce from Theorem~\ref{Neumann} that either $E(b)\cap H_2$ is of finite index in $H_2$, or there exists $(t,z)\in M$ such that
$z^{-1}H_1z\cap H_2$ is of finite index in $H_2$.
In the first case, $\langle b\rangle$ has finite index in $H_2$ and we are done.
In the second case, a finite index subgroup of $H_2$ is conjugate into $H_1$.
\hfill $\Box$




\begin{cor}\label{1} Let $G$ be a hyperbolic group, let $H_1$ be a quasiconvex subgroup
that is closed under taking of roots, and let $H_2$ be an arbitrary subgroup of $G$.
Suppose that $H_2$ is elementwise conjugate into $H_1$. Then $H_2$ is conjugate into~$H_1$.
\end{cor}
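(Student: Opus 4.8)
The plan is to bootstrap the corollary directly from Theorem~\ref{hyperb}, using the root-closure hypothesis to upgrade a finite-index conclusion to a full one. By Theorem~\ref{hyperb}, the standing hypotheses (that $H_1$ is quasiconvex and $H_2$ is elementwise conjugate into $H_1$) already furnish a finite-index subgroup $K\leqslant H_2$ together with an element $c\in G$ such that $K^c\leqslant H_1$. The whole task is therefore to promote this to $H_2^c\leqslant H_1$, that is, to show that the \emph{same} conjugator $c$ carries all of $H_2$ into $H_1$.

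First I would exploit that $K$ has finite index in $H_2$: writing $m=[H_2:K]$, a pigeonhole argument on the $m+1$ cosets $K,hK,\dots,h^mK$ shows that for every $h\in H_2$ there is an integer $n$ with $1\leqslant n\leqslant m$ and $h^n\in K$. Conjugating by $c$ then yields $(h^c)^n=(h^n)^c\in K^c\leqslant H_1$. At this point I would invoke the hypothesis that $H_1$ is closed under taking of roots: from $(h^c)^n\in H_1$ we conclude $h^c\in H_1$. Since $h\in H_2$ was arbitrary and $c$ did not depend on $h$, this gives $H_2^c\leqslant H_1$, which is precisely the assertion that $H_2$ is conjugate into $H_1$.

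The substance of the argument is entirely contained in Theorem~\ref{hyperb}; the present deduction is short, and its only delicate point is that the conjugator $c$ must be common to \emph{all} elements of $H_2$. This is exactly what Theorem~\ref{hyperb} supplies (a single $K$ and a single $c$), so the hard part is not really here. The indispensable extra ingredient is root-closure: it is what converts the statement ``every element of $H_2$ has a power whose $c$-conjugate lies in $H_1$'' into ``every element of $H_2$ has its $c$-conjugate in $H_1$''. I would remark that this hypothesis is not vacuous in the intended applications, since by Lemma~\ref{retracts_and_roots} retracts of torsion-free hyperbolic groups are closed under taking of roots.
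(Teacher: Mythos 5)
Your proof is correct and takes essentially the same route as the paper's: invoke Theorem~\ref{hyperb} to obtain a single conjugator carrying a finite-index subgroup of $H_2$ into $H_1$, note that every element of $H_2$ has a bounded power in that subgroup, and then apply root-closure of $H_1$. The only (immaterial) difference is that the paper conjugates the uniform power $H_2^{n!}$ while you extract a per-element exponent by pigeonhole.
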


{\it Proof.} 
By Theorem~\ref{hyperb}, there is an element $z\in G$ and a subgroup $A$ of a finite index $n$ in $H_2$ such that $z^{-1}Az\leqslant H_1$. Then $z^{-1}H_2^{n!}z\leqslant H_1$. Since $H_1$ is closed under
taking of roots, we have $z^{-1}H_2z\leqslant H_1$.\hfill $\Box$

\begin{cor}\label{hyperb_1}
Let $G$ be a torsion-free hyperbolic group, let $H_1$ be a retract of $G$, and let $H_2$
be an arbitrary subgroup of $G$. Suppose that $H_2$ is elementwise conjugate into $H_1$. Then
$H_2$ is conjugate into $H_1$.
\end{cor}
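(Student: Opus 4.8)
The plan is to deduce Corollary~\ref{hyperb_1} directly from Corollary~\ref{1}, which already handles elementwise-conjugate subgroups under the two hypotheses that $H_1$ is quasiconvex and closed under taking of roots. Thus the entire task reduces to verifying that a retract $H_1$ of a torsion-free hyperbolic group $G$ satisfies both of these properties; once that is done, the conclusion is immediate because $H_2$ is assumed to be elementwise conjugate into $H_1$.

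First I would check quasiconvexity. Since $H_1$ is a retract of the hyperbolic group $G$, the second assertion of Lemma~\ref{observ} applies verbatim and shows that $H_1$ is quasiconvex in $G$. Second, I would check closure under taking of roots. Here I would invoke Lemma~\ref{retracts_and_roots}: because $G$ is torsion-free hyperbolic, it has the unique root property, so its retracts are closed under taking of roots; in particular $H_1$ is closed under taking of roots. With these two facts established, $G$, $H_1$, and $H_2$ satisfy exactly the hypotheses of Corollary~\ref{1}, and that corollary yields at once that $H_2$ is conjugate into $H_1$.

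I do not expect any genuine obstacle in this argument, since all of the analytic and geometric work has already been carried out in Theorem~\ref{hyperb} and Corollary~\ref{1}. The only point requiring care is making sure the two structural properties of the retract $H_1$ are correctly matched to the hypotheses of Corollary~\ref{1} and that the torsion-freeness of $G$ is actually used (it enters solely through the unique root property needed for Lemma~\ref{retracts_and_roots}). In short, Corollary~\ref{hyperb_1} is the specialization of Corollary~\ref{1} to the case where the quasiconvex, root-closed subgroup arises as a retract.
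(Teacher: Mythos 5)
Your proposal is correct and follows exactly the paper's own argument: quasiconvexity of the retract $H_1$ via Lemma~\ref{observ}, closure under roots via Lemma~\ref{retracts_and_roots} using the unique root property of torsion-free hyperbolic groups, and then an application of Corollary~\ref{1}. Nothing to add.
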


\medskip

{\it Proof.} 
The group $G$, as every torsion-free hyperbolic group, has the unique root property. By Lemma~\ref{retracts_and_roots},
$H_1$ is closed under taking of roots, and  by Lemma~\ref{observ}, $H_1$ is quasiconvex.
Corollary~\ref{1} completes the proof.\hfill $\Box$



\section{Appendix: Some algorithmic problems related to SCS and SICS}

We say that $G$ has the {\it cyclic-SICS property} if $G$ satisfies the definition of SICS where
$H_1$ runs over all cyclic subgroups of $G$.

In parallel with the properties SCS, SICS, and cyclic-SICS, we discuss the following algorithmic problems,
which are variants of the classical conjugacy problem.

{\bf Subgroup conjugacy problem (P1)}. Given a finitely presented group $G$ and two finitely generated subgroups $H_1$ and $H_2$ of $G$, decide whether there exists $g\in G$ such that $H_1^g=H_2$.

{\bf Subgroup into-conjugacy problem (P2)}.
Given a finitely presented group $G$ and two finitely generated subgroups $H_1$ and $H_2$ of $G$, decide whether there exists $g\in G$ such that $H_1^g\leqslant H_2$.

{\bf Cyclic-subgroup into-conjugacy problem (P3)}. This is the special case of the previous problem where $H_1$
is cyclic.

\medskip

Clearly, the decidability of the word problem for $G$ is a necessary condition for the decidability of each of these
problems for $G$. Therefore all these problems are undecidable in the class of finitely presented groups.




%



\begin{prop}\label{algor}

{\rm (a)} The subgroup conjugacy problem is decidable for SCS groups.

{\rm (b)} The subgroup into-conjugacy problem is decidable for SICS groups.

{\rm (c)} The cyclic-subgroup into-conjugacy problem is  decidable for cyclic-SICS groups.


\end{prop}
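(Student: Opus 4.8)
The plan is to decide each problem by a standard two-sided search: for a given instance we run, in parallel (by dovetailing), one semi-algorithm that halts precisely when the true answer is \emph{yes}, and a second semi-algorithm that halts precisely when the true answer is \emph{no}. The first procedure needs no hypothesis beyond finite presentability, while the second is exactly where the separability property (SCS, SICS, or cyclic-SICS) enters. Since for every instance exactly one of the two answers is correct, and each search halts whenever the corresponding answer holds, the combined procedure always terminates and returns the correct answer.

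For the \emph{yes}-side I first recall that in a finitely presented group the set of pairs $(w,u)$ with $w=u$ in $G$ is recursively enumerable (enumerate consequences of the defining relators). Hence, given a word $w$ and a finitely generated subgroup $K=\langle k_1,\dots,k_m\rangle$, membership $w\in K$ is semi-decidable: dovetailing over all products of the $k_j^{\pm 1}$ and all derivations from the relators, one halts exactly when some product is proved equal to $w$, i.e.\ iff $w\in K$. Consequently, for a fixed candidate $g\in G$ each of the finitely many conditions ``$g^{-1}hg\in H_2$ for every generator $h$ of $H_1$'' (and, in problem~(P1), also the symmetric conditions witnessing $H_2^{g^{-1}}\leqslant H_1$) is semi-decidable, so the assertion $H_1^g\leqslant H_2$ (resp.\ $H_1^g=H_2$) is semi-decidable. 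Dovetailing over all $g\in G$ then yields a procedure that halts iff some conjugator exists, that is, iff the answer is \emph{yes}.

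For the \emph{no}-side I use that the finite quotients of a finitely presented $G$ can be effectively enumerated: run over all finite groups and all maps of the generating set into them, keeping those that send every defining relator to the identity. For each quotient $\varphi\colon G\to\overline{G}$ the images $\varphi(H_1),\varphi(H_2)$ are explicit finite subgroups, and in the finite group $\overline{G}$ one checks by exhaustion whether $\varphi(H_1)$ is conjugate to $\varphi(H_2)$ (for (P1)), whether $\varphi(H_1)$ is conjugate into $\varphi(H_2)$ (for (P2)), or whether the image of the given cyclic subgroup is conjugate into $\varphi(H_2)$ (for (P3)). The search halts as soon as some quotient witnesses the relevant non-conjugacy. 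Here the separability hypothesis is decisive: if the true answer is \emph{no}, then SCS (resp.\ SICS, resp.\ cyclic-SICS), applied with $H_1$ and $H_2$ in the roles prescribed by its definition, guarantees that such a witnessing quotient exists, so the enumeration is bound to find it.

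The main, and really the only conceptual, obstacle is that neither search terminates unconditionally: membership and conjugacy-into are merely semi-decidable, so the \emph{yes}-search may run forever on a \emph{no}-instance and the quotient search may run forever on a \emph{yes}-instance. What rescues the algorithm is precisely that the \emph{yes}-search halts on every \emph{yes}-instance (needing only finite presentability) while the separability property forces the \emph{no}-search to halt on every \emph{no}-instance; running them together therefore always stops. I would finally record that the three parts are variants of one scheme: (P3) is the stated cyclic specialization of (P2) and merely invokes the restricted property cyclic-SICS, whereas (P1) differs from (P2) only in checking equality rather than containment of the images in the finite quotient.
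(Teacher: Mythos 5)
Your proposal is correct and follows essentially the same strategy as the paper: two dovetailed semi-algorithms, one enumerating potential conjugators (halting on \emph{yes}-instances by finite presentability and recursive enumerability of subgroup membership) and one enumerating finite quotients (halting on \emph{no}-instances precisely because of the SCS/SICS/cyclic-SICS hypothesis). The paper writes the \emph{yes}-search as enumeration of the recursively enumerable set $\{(u_1^g,\dots,u_n^g)\mid g\in F(X)\}\cap \widetilde{H}_2\times\dots\times\widetilde{H}_2$ in the free group, which is just a different packaging of your membership semi-decision procedure.
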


{\it Proof.} We prove only (b). The statements (a) and (c) can be proved, analogously.
Let $G$ be a SICS-group given by a finite presentation $\langle  X\,|\  R\rangle$.
We may assume that $G=F(X)/\langle\!\langle R\rangle\!\rangle$, where $F(X)$ is the free group with basis $X$ and $\langle\!\langle R\rangle\!\rangle$ is the normal closure of $R$ in $F(X)$.
Let $H_1$ and $H_2$ be two subgroups of $G$ given by the words $u_1,\dots ,u_n$ and $v_1,\dots ,v_m$
in the alphabet~$X$, respectively.

The algorithm uses two parallel processes.
The first process computes all possible finite images of $G$ and decides, whether the image of
$H_1$ is conjugate into the image of $H_2$. We stop this process with the answer {\bf NO}
if the current image of $H_1$ is not conjugate into the current image of $H_2$. Now, we turn to the second process. Since $G$ is finitely presented, the set
  \[
  \{(u_1^g,\dots ,u_n^g)|\ g\in F(X)\}\bigcap \,
  \underbrace{\widetilde{H}_2\times \dots \times
    \widetilde H_2}_{n\hspace*{2mm} {\rm times}}
  \]
  with
  \(
  \widetilde{H}_2:=\langle v_1,\dots ,v_m\rangle\cdot \langle\!\langle R\rangle\!\rangle
  \)
  is recursively enumerable. So, we start a process enumerating it; but once an element is found, we have a witness that $H_1$ is conjugate into $H_2$. So in this event, we stop with the answer {\bf YES}. Clearly, since $G$ is SICS, one of these processes eventually stops.
\hfill $\Box$

\begin{cor}\label{SCS, but not cyclic-SICS}
There exists a finitely generated torsion free nilpotent group which is SCS, but not cyclic-SICS.
\end{cor}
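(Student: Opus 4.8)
The plan is to exhibit a single finitely generated torsion-free nilpotent group $G$ that is SCS for free, yet in which a conjugacy-into-subgroup decision problem is algorithmically undecidable; the algorithmic criterion of Proposition~\ref{algor} then forbids $G$ from being cyclic-SICS. The SCS half requires nothing special about $G$: every finitely generated nilpotent group is polycyclic, and all virtually polycyclic groups are SCS by the theorem of Grunewald and Segal (see Theorem~7 in Chapter~4 of~\cite{Segal}). So it remains only to arrange the failure of cyclic-SICS.

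For that I would argue by contraposition through Proposition~\ref{algor}(c). Finitely generated nilpotent groups are polycyclic, hence finitely presented with solvable word problem, so the criterion applies and tells us that a cyclic-SICS group has a decidable cyclic-subgroup into-conjugacy problem (P3). Consequently it suffices to produce a finitely generated torsion-free nilpotent group $G$ in which P3 is undecidable, that is, in which no algorithm decides, for an input element $w$ and an input finitely generated subgroup $H_2 = \langle v_1, \dots, v_m\rangle$, whether some conjugate of $w$ lies in $H_2$.

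The substance of the argument is the construction of such a $G$, and here I would follow Segal and reduce from Hilbert's Tenth Problem. Fixing Mal'cev coordinates identifies $G$ with $\mathbb{Z}^d$ so that group multiplication, the conjugation map $g \mapsto g^{-1}wg$, and membership in a fixed finitely generated subgroup are all given by integer-polynomial relations among the coordinates; the essential nonlinear (multiplicative) terms are already produced by the commutator structure once the nilpotency class is at least two. The goal is to design $G$ together with a family of instances so that deciding an instance of P3 is equivalent to deciding the solvability over $\mathbb{Z}$ of a prescribed polynomial system $P(x_1, \dots, x_n) = 0$. Granting such an encoding, the Davis--Putnam--Robinson--Matiyasevich theorem makes P3 undecidable in $G$, whence $G$ is not cyclic-SICS while being SCS by the first paragraph.

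The main obstacle is exactly this encoding: realizing arbitrary integer multiplication, and therefore an arbitrary Diophantine condition, as a conjugate-into-subgroup question inside a torsion-free nilpotent group, and verifying that the reduction is effective in both directions (so that an oracle for P3 would solve the Diophantine problem and conversely). Once the encoding is in place, the two remaining ingredients---automatic SCS for nilpotent groups and the passage from undecidability of P3 to the failure of cyclic-SICS via Proposition~\ref{algor}(c)---are immediate.
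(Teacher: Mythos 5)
Your overall architecture is exactly the paper's: SCS comes for free from the Grunewald--Segal theorem that virtually polycyclic groups are SCS~\cite{GS}, and the failure of cyclic-SICS is obtained by exhibiting a finitely generated torsion-free nilpotent group in which the problem ``is $g$ conjugate into $H$?'' is undecidable and then invoking Proposition~\ref{algor}(c). The only divergence is in how that undecidability is obtained, and here your writeup has an admitted hole: you leave the Diophantine encoding --- realizing an arbitrary polynomial system over $\mathbb{Z}$ as a conjugacy-into-subgroup instance in a nilpotent group --- as ``the main obstacle,'' without carrying it out or verifying the two-way effectiveness of the reduction. The paper does not redo this work; it simply cites Theorem~C1 of Segal~\cite{Segal2}, which states verbatim that there exist a finitely generated torsion-free nilpotent group $G$ and a subgroup $H$ for which no algorithm decides whether a given element is conjugate into $H$. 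So the missing step in your argument is a known theorem (whose proof is indeed along the lines you sketch), and the gap closes by citation rather than by construction; as written, though, your proof is incomplete at precisely its load-bearing point.
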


{\it Proof.}
Theorem C1 in~\cite{Segal2} says that there exist a finitely generated torsion free nilpotent group $G$
and a subgroup $H$ of $G$ such that there is no algorithm to decide, whether a given element $g\in G$ is conjugate
into $H$. Thus, by the statement (c) of Proposition~\ref{algor}, $G$ is not cyclic-SICS.
On the other hand, $G$ is SCS, since by~\cite{GS} all polycyclic-by-finite groups are SCS.\hfill $\Box$

\medskip

The following corollary follows from Corollary~\ref{main_cor}, Proposition~\ref{algor}, and the fact that
every finitely generated subgroup of a surface group is quasi-convex.

\begin{cor}
The algorithmic problems {\bf (P1)} and {\bf (P2)} are decidable for fundamental groups of compact surfaces.
\end{cor}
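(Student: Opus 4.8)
The plan is to reduce everything to Corollary~\ref{main_cor} together with Proposition~\ref{algor}. Proposition~\ref{algor}(a) makes \textbf{(P1)} decidable for \emph{every} finitely presented SCS group, and Proposition~\ref{algor}(b) makes \textbf{(P2)} decidable for every finitely presented SICS group. Since a surface group is finitely presented, it therefore suffices to verify that surface groups are SCS and SICS in the full (finitely generated) sense, rather than merely quasiconvex-SCS and quasiconvex-SICS.

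First I would recall that by Corollary~\ref{main_cor}(3) every surface group is hereditarily quasiconvex-SICS and hereditarily quasiconvex-SCS; in particular it is quasiconvex-SICS and quasiconvex-SCS. The difference between these properties and the full SICS/SCS properties is precisely that the subgroups $H_1,H_2$ range over quasiconvex rather than arbitrary finitely generated subgroups, as recorded in item~(3) of the introduction. The key observation is that for a surface group these two ranges coincide: by the fact stated in Remark~\ref{U-Bahn}(i) (Gitik), every finitely generated subgroup of a surface group is quasiconvex. Hence quasiconvex-SICS is literally the same statement as SICS, and quasiconvex-SCS is the same as SCS. This upgrades the conclusion of Corollary~\ref{main_cor} to the assertion that every surface group is SICS and SCS.

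With this in hand the corollary is immediate: a surface group $G$ is finitely presented and SCS, so \textbf{(P1)} is decidable for $G$ by Proposition~\ref{algor}(a); likewise $G$ is SICS, so \textbf{(P2)} is decidable for $G$ by Proposition~\ref{algor}(b).

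I do not expect a genuine obstacle, since the real work is already carried out in Theorem~\ref{hyperb} (feeding into Corollary~\ref{main_cor}) and in the enumeration argument of Proposition~\ref{algor}; the only additional input is Gitik's theorem, which is exactly what licenses dropping the word ``quasiconvex''. The one point deserving care is that hyperbolicity of $G$, needed for the very notion of a quasiconvex subgroup, holds for the closed surfaces of higher genus and for the bounded cases (where $G$ is free), but not for the exceptional low-complexity compact surfaces. These remaining cases give either finite groups or the virtually polycyclic groups $\mathbb{Z}^2$ and the Klein bottle group, for which SCS follows from Grunewald--Segal~\cite{GS} and the decidability of \textbf{(P1)} and \textbf{(P2)} is classical; so they can be dispatched separately and do not affect the main argument.
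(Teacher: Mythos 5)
Your proposal is correct and follows essentially the same route as the paper, whose proof is the single observation that the corollary follows from Corollary~\ref{main_cor}, Proposition~\ref{algor}, and Gitik's fact that every finitely generated subgroup of a surface group is quasiconvex. Your extra care with the non-hyperbolic low-complexity compact surfaces (sphere, torus, Klein bottle, and the bounded cases where the group is free) is a reasonable supplement that the paper leaves implicit, but it does not change the argument.
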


The algorithm in the proof of Proposition~\ref{algor} does not give an estimate of running time, so it is not useful in practice. If $G$ is a finitely generated free group, then there are polynomial algorithms for {\bf (P1)} and {\bf (P2)}.
We would like to thank Bettina~Eick for questions leading to Proposition~\ref{complexity}.

\begin{prop}\label{complexity}
  There is a polynomial time algorithm to solve problem {\bf (P2)} in a finitely generated free group.
\end{prop}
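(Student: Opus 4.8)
The plan is to give a polynomial time algorithm deciding, for finitely generated subgroups $H_1=\langle u_1,\dots,u_n\rangle$ and $H_2=\langle v_1,\dots,v_m\rangle$ of a free group $F=F(X)$, whether there exists $g\in F$ with $H_1^g\leqslant H_2$. The natural tool is the theory of Stallings foldings: to each finitely generated subgroup one associates, in polynomial time, its \emph{core graph} $\Gamma(H)$, a finite folded graph (an immersion into the rose) with a distinguished basepoint, whose reduced loops read off exactly the elements of $H$. First I would compute the folded core graphs $\Gamma_1=\Gamma(H_1)$ and $\Gamma_2=\Gamma(H_2)$; folding a graph with $N$ edges takes time polynomial in $N$, and $N$ is bounded by the total length of the generating words, so this stage is polynomial.

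The key reformulation is that $H_1$ is conjugate into $H_2$ if and only if the core graph of $H_1$ admits a graph immersion into the core graph of $H_2$ \emph{up to basepoint}, equivalently if the basepoint-free core (the \emph{cyclic core}, obtained by trimming valence-one vertices other than concerns about the basepoint) of $\Gamma_1$ maps into $\Gamma_2$ by a label-preserving graph morphism that is locally injective. More precisely, $H_1^g\leqslant H_2$ for some $g$ iff there is a vertex $w$ of $\Gamma_2$ and a label-preserving immersion $\Gamma_1\to\Gamma_2$ sending the basepoint of $\Gamma_1$ to $w$; the conjugating element is then read off from the path in $\Gamma_2$ from its basepoint to $w$. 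So the algorithm iterates over the at most $O(|V(\Gamma_2)|)$ choices of image vertex $w$ and, for each, attempts to build the immersion.

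The central step is checking, for a fixed target vertex $w$, whether a label-preserving immersion $\Gamma_1\to\Gamma_2$ exists with basepoint mapping to $w$. Because both graphs are folded (each vertex has at most one incoming and one outgoing edge for each letter of $X^{\pm1}$), such an immersion, \emph{if it exists, is forced and unique}: once the image of one vertex is fixed, following each edge of $\Gamma_1$ determines the image of the adjacent vertex in $\Gamma_2$, since the target edge with the prescribed label at a given vertex is unique by foldedness. Thus the map can be constructed greedily by a breadth-first traversal of $\Gamma_1$, propagating vertex images along edges; at each edge one checks that the required labelled edge is present in $\Gamma_2$, declaring failure otherwise, and one checks consistency when a vertex already carries an assigned image. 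This traversal runs in time $O(|E(\Gamma_1)|)$, so the whole procedure is $O(|V(\Gamma_2)|\cdot|E(\Gamma_1)|)$, which is polynomial in the input size.

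The main obstacle, and the point requiring care, is the precise handling of the basepoint and of conjugacy versus plain containment: whereas $H_1\leqslant H_2$ corresponds to a pointed immersion $\Gamma_1\to\Gamma_2$ preserving basepoints, the into-conjugacy problem {\bf (P2)} allows the basepoint of $\Gamma_1$ to land on \emph{any} vertex $w$ of $\Gamma_2$, which is exactly why we quantify over $w$. One must verify that this correspondence is correct in both directions — that an immersion of core graphs genuinely yields a conjugator $g$ and conversely — using the standard fact that reduced loops in $\Gamma_2$ based at $w$, read backward along a chosen path from the basepoint to $w$, realize precisely the conjugate subgroups $H_2^{g}$. Handling degenerate cases (the trivial subgroup $H_1$, or $H_1$ not of full rank so that $\Gamma_1$ has free vertices) is routine but must be stated. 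Once the graph-immersion characterization is in place, the polynomial bound is immediate from the polynomial cost of folding together with the product bound above.
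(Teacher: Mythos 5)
Your proposal is correct and follows essentially the same route as the paper: both reduce \textbf{(P2)} to checking, for each vertex $w$ of the finite Stallings core of $H_2$, whether all of $H_1$ reads closed loops based at $w$, the key point in each case being that a reduced closed path cannot leave the core (so the infinite covering space never needs to be built). The only cosmetic difference is that you also fold $H_1$ into its core graph $\Gamma_1$ and test for a label-preserving morphism $\Gamma_1\to\Gamma_2$ (unique once $w$ is fixed, by foldedness), whereas the paper traces each generating word of $H_1$ directly through the core of $\Gamma_2$; both yield the same polynomial bound.
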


\begin{proof}
  We regard the free group $F_n=\langle x_1,\ldots,x_n\rangle$ as the fundamental group of a rose $R_n$ with $n$ petals. We orient the edges and label them with letters $x_1,\ldots,x_n$. The subgroup $H_2 \leqslant F_n$ corresponds to a graph $\Gamma_2$ that is a covering space of $R_n$. The subgroup $H_2$ can be identified with the fundamental group of $\Gamma_2$, which involves choosing a vertex $v$ as a base point of $\Gamma_2$. Changing the base vertex corresponds to passing to a conjugate of $H_2$. The edges of $\Gamma_2$ inherit a label and an orientation from the rose $R_n$. The identification of $H_2$ with the fundamental group of $\Gamma_2$ is by reading off the word corresponding to a loop from edge labels and orientations. A reduced word $w\in F_n$ belongs to the subgroup $H_2$ if and only if it describes a loop in $\Gamma_2$ based at $v$.

  Because $H_2$ is finitely generated, $\Gamma_2$ has a finite {\it minimal core}: the minimal core of $\Gamma_2$ is the subgraph consisting of those edges that belong to a cycle in $\Gamma_2$. Equivalently, the minimal core is the smallest deformation retract of $\Gamma_2$. We remark that the core of $\Gamma_2$ can be efficiently constructed from a finite generating set of $H_2$ using Stallings folds. See~\cite{Touikan} for an algorithm that is particularly fast, i.e., almost linear in the total length of the generators. We note that the number of edges (and also the number of vertices) in the core is bounded by the total length of the generators for $H_2$.

  The graph $\Gamma_2$ consists of its core and tree components hanging off the core. From the definition of the minimal core the following is immediate:
\begin{quote}
  Any non-trivial cyclicly reduced closed path in $\Gamma_2$ is contained in its core. Put differently: if reading a cyclicly reduced non-trivial word in $\Gamma_2$ starting at a vertex $v$ takes us back to the vertex $v$, then the vertex $v$ belongs to the core of $\Gamma_2$.
\end{quote}

Let $w_1,w_2,\ldots,w_r$ be reduced words generating $H_1$. By passing to a conjugate of $H_1$, we can assume that at least one generator, say $w_1$ is a non-trivial cyclicly reduced word. This transformation of the problem is computationally cheap.

Now, $H_1$ is conjugate to a subgroup of $H_2$ if and only if there is a vertex $v\in\Gamma_2$ such that all $w_i$ read closed loops based at $v$. By the preceding observation, such a vertex $v$ has to belong to the core of $\Gamma_2$. It follows that testing for such conjugacy amounts to checking each $w_i$ against each vertex $v$ in the core of $\Gamma_2$.

As stated so far, the test requires constructing a path in $\Gamma_2$. However, one can restrict attention to the finite core. A reduced word describes an edge path without backtracking; and if such a path leaves the core (and thus enters a tree component) it can never return to the core. Thus, we can stop checking $w_i$ against $v$ as soon as the constructed path leaves the core of $\Gamma_2$.
\end{proof}
\begin{rmk}{\rm
    Lemma~\ref{observation} shows that a polynomial time algorithm to solve {\bf (P1)} in a finitely generated free group is obtained simply by running the previous algorithm for {\bf (P2)} twice to test whether $H_1$ conjugates into $H_2$ {\it and} $H_2$ conjugates into $H_1$.

}\end{rmk}


\noindent
{\bf Questions.} {\bf (1)}
Does there exist a polynomial algorithm which, given a limit group $G$ and
two finitely generated subgroups $H_1,H_2$ of $G$, decides whether $H_1$ is conjugate into $H_2$?

{\bf (2)} Let $G=G_1\ast_{\mathbb{Z}} G_2$ be a finitely generated amalgamated product over the infinite cyclic group~$\mathbb{Z}$.
Suppose that $G_1$ and $G_2$ are SICS, and that $\mathbb{Z}$ is malnormal in $G$.
Does the group $G$ possess the property SICS?



\begin{rmk}\label{quasi-conv_cannnot be deleted}
{\rm
(1) Recall that an embedding of groups $i:A\hookrightarrow B$ is called {\it malnormal} if
for each $b\in B$ the inequality $i(A)^b\cap i(A)\neq 1$ implies $b\in i(A)$.
We observe that if one of the algorithmic problem {\bf (P1), (P2)}, or {\bf (P3)} is undecidable for $A$ and there exists a malnormal embedding of $A$ into $B$, then this problem is undecidable for $B$ too.


(2) Let $F_s$ be the free group with basis $\{x_1,\dots ,x_s\}$.
We claim that for each $F_n\times F_m$ with $n,m\geqslant 2$ the problem {\bf (P3)} is undecidable.
This can be proved by using Mihailova's construction, see~\cite{Mih} or~\cite{LS}.
Indeed, let $H=\langle x_1,\dots ,x_s\,|\, r_1,\dots ,r_t\rangle$ be a finitely presented group,
and let $L_H$ be the subgroup in $F_s\times F_s$  generated by the pairs
$$
\begin{array}{ll}
(x_i,x_i), & \hspace*{2mm} i=1,\dots, s,\vspace*{1mm}\\
(1,r_j), & \hspace*{2mm} j=1,\dots, t.
\end{array}
$$
By~\cite[Lemma 4.2 in Chapter IV]{LS}, a pair $(u,v)$ from $F_s\times F_s$ lies in $L_H$ if and only if $u=v$ in~$H$.
Hence, the cyclic subgroup $\langle (u,1)\rangle$ is conjugate into $L_H$ if and only if $u=1$ in~$H$.
Therefore, if $H$ is a group with unsolvable word problem, then there is no algorithm to decide
whether a cyclic subgroup of $F_s\times F_s$ is conjugate into $L_H$. Hence {\bf (P3)} is undecidable
for $F_s\times F_s$.
Since for every $n,m\geqslant 2$, there exists a malnormal embedding $i:F_s \times F_s\hookrightarrow  F_n\times F_m$, this problem is also undecidable for $F_n\times F_m$.

\medskip

(3) We claim that for each $F_n\times F_m$ with $n,m\geqslant 2$ the problem {\bf (P1)} is undecidable.
Using malnormal embeddings, it suffices to consider the case $F_6\times F_6$.
For this group the generation problem is undecidable, see~\cite{Miller} or~\cite[Theorem 4.4 in Chapter IV]{LS}. This means that it is undecidable whether a finite list of elements generates $F_6\times F_6$. The following observation completes the proof: if $G$ is a group for which the problem {\bf (P1)} is decidable, then the generation problem for $G$ is also decidable.

\medskip

(4) From (2) and (3), and using Proposition~\ref{algor}, we get that the groups $F_n\times F_m$ with $n,m\geqslant 2$ are not cyclic-SICS and not SCS.
(An alternative proof that $F_2\times F_2$ is not SCS can be found in~\cite{CZ3}.)
The latter stands in contrast to the fact that these groups are conjugacy separable.
We do not know, whether $\mathbb{Z}\times F_2$ SCS or not.
}
\end{rmk}




\begin{prop}\label{CS but not SICS/SCS}
{\it There exists a torsion-free group with finite $C'(1/6)$ presentation that
is hereditarily conjugacy separable, but is not SICS and is not SCS.
}
\end{prop}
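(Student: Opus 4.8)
The plan is to construct $G$ by the Rips construction and then to certify its three required properties—torsion-freeness together with a finite $C'(1/6)$ presentation, hereditary conjugacy separability, and the simultaneous failure of SICS and SCS—by reducing the last two to the algorithmic criteria of Proposition~\ref{algor}. First I would fix a finitely presented group $Q$ with unsolvable word problem (for instance a Novikov--Boone group) and apply the Rips construction~\cite{Rips} to it, obtaining a short exact sequence
$$1\longrightarrow N\longrightarrow G\longrightarrow Q\longrightarrow 1,$$
in which $G$ is a torsion-free group given by a finite $C'(1/6)$ presentation and $N=\langle a,b\rangle$ is a finitely generated \emph{normal} subgroup. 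This is precisely the construction already invoked in Remark~\ref{U-Bahn}(ii), with the extra freedom of choosing $Q$ so that its word problem is unsolvable.

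For hereditary conjugacy separability I would feed $G$ into the machinery of Section~5. Being torsion-free with a finite $C'(1/6)$ presentation, $G$ is hyperbolic and, by Lemma~\ref{examples_1}(4), virtually compact special; hence $G$ lies in $\mathcal{Q}\mathcal{V}\mathcal{H}$ by Theorem~\ref{Wise_Agol}, and is therefore hereditarily conjugacy separable by Theorem~\ref{herid_CS}.

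The core of the argument is to reduce the word problem of $Q$ to the conjugacy problems {\bf (P3)} and {\bf (P1)} for $G$, exploiting the normality of $N$. Given a word $w$ representing an element $q\in Q$, normality gives $g^{-1}wg\in N\Leftrightarrow w\in N\Leftrightarrow q=1$ in $Q$. Consequently the cyclic subgroup $\langle w\rangle$ is conjugate into $N$ if and only if $q=1$, so taking $H_1=\langle w\rangle$ and $H_2=N$ shows that {\bf (P3)} is undecidable for $G$; by Proposition~\ref{algor}(c), $G$ is not cyclic-SICS, and since cyclic-SICS is implied by SICS, $G$ is not SICS. For the same reason the finitely generated subgroup $\langle w,a,b\rangle$ is conjugate to $N$ if and only if $w\in N$: the implication $w\in N\Rightarrow\langle w,a,b\rangle=N$ is immediate, while conversely $g^{-1}\langle w,a,b\rangle g=N$ forces $g^{-1}wg\in N$, whence $w\in N$ by normality. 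Thus, taking $H_1=\langle w,a,b\rangle$ and $H_2=N$, the problem {\bf (P1)} is undecidable for $G$, and by Proposition~\ref{algor}(a), $G$ is not SCS.

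I expect the main obstacle to lie not in these reductions, which are clean once $N$ is normal, but in invoking the Rips output in the precise form required: a single group that is simultaneously torsion-free, $C'(1/6)$, and sits in an exact sequence with $Q$ and a finitely generated \emph{normal} kernel. This is standard, but it is exactly what makes the normality trick available. A secondary point to check is that the contrapositive of Proposition~\ref{algor} genuinely applies—its proof yields an actual decision procedure for any finitely presented group enjoying the relevant separability property, so the undecidability of {\bf (P1)} and {\bf (P3)} for our finitely presented $G$ does refute SCS and cyclic-SICS, respectively.
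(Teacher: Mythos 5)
Your proposal is correct and follows the same overall template as the paper (Rips construction plus the algorithmic criteria of Proposition~\ref{algor} and the $\mathcal{Q}\mathcal{V}\mathcal{H}$ machinery for hereditary CS), but the key undecidability step is obtained by a genuinely different route. The paper takes $Q=F_2\times F_2$, establishes undecidability of {\bf (P1)} and {\bf (P2)} for $Q$ via Mihailova's construction and Miller's theorem on the generation problem (Remark~\ref{quasi-conv_cannnot be deleted}), and then lifts undecidability through the quotient map using the observation that undecidability of {\bf (Pi)} for $G/N$ with $N$ finitely generated and normal implies undecidability for $G$. You instead take any $Q$ with unsolvable word problem and reduce that word problem directly to instances of {\bf (P3)} and {\bf (P1)} inside $G$, using the kernel $N=\langle a,b\rangle$ itself as the target subgroup: $\langle\tilde w\rangle$ is conjugate into $N$ iff $\tilde w\in N$, and $\langle\tilde w,a,b\rangle$ is conjugate to $N$ iff $\tilde w\in N$, both by normality of $N$. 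Your reductions are valid, and your version is more self-contained (it bypasses Mihailova and Miller entirely, though of course it still rests on the existence of a finitely presented group with unsolvable word problem); the paper's version buys the stronger side fact, recorded in Remark~\ref{quasi-conv_cannnot be deleted}(4), that $F_n\times F_m$ itself is not cyclic-SICS and not SCS. One point you wave at but do not settle: torsion-freeness of the Rips group is not automatic from the $C'(1/6)$ condition alone; the paper arranges that no defining relator is a proper power and then invokes Greendlinger's theorem, and you should include that step rather than calling it standard.
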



{\it Proof.}
We will use the Rips construction~\cite{Rips} and the following easy observation. Suppose that $N$ is a finitely generated normal subgroup of a group $G$. If the problem {\bf (Pi)} is undecidable for $G/N$, then it is undecidable for $G$.

Given any finitely presented group $Q$, Rips constructs a group $G$
with a finite $C'(1/6)$-presentation and a finitely generated normal subgroup $N$ of $G$
such that $G/N\cong Q$.
We take $Q=F_2\times F_2$ in this construction.
By Remark~\ref{quasi-conv_cannnot be deleted}(2)-(3), the problems {\bf (P2)} and {\bf (P1)} are undecidable for $Q$,
hence they are undecidable for $G$.
By Proposition~\ref{algor}, $G$ is not SICS and is not SCS.

The group $G$ is hereditarily conjugacy separable by Theorem~\ref{herid_CS} combined with
Lemma~\ref{examples_1}(4). In the construction of Rips, one can additionally provide that every defining relation of $G$ is not a proper power.
Then $G$ is torsion-free by the result of Greendlinger~\cite{Greendlinger} saying
that any group with a finite $C'(1/6)$-presentation, where each relator is not a proper power,
is torsion-free. \hfill$\Box$


\begin{thebibliography}{AA}

\bibitem{Agol} I. Agol, {\it The virtual Haken conjecture}, Doc. Math., {\bf 18} (2013), 1045--1087.
With an appendix by Ian Agol, Daniel Groves, and Jason Manning.



\bibitem{BogBux} O. Bogopolski, K.-U.Bux, {\it Subgroup conjugacy separability for surface groups}.
Preprint (2014), 22 pages.
Available at http://arxiv.org/pdf/1401.6203.pdf

\bibitem{BV} Bogopolski O., Ventura E., {\it On endomorphisms of torsion-free hyperbolic groups}, International Journal of Algebra and Computations, {\bf 21}, (8) (2011), 1415-1446.


\bibitem{BW} N. Bergeron, D.T. Wise, {\it A boundary criterion for cubulation}, Amer. J. Math. {\bf 134} (3) (2012),
843-859.

\bibitem{Bestvina} M. Bestvina, {\it Geometric group theory and 3-manifolds hand in hand: The fulfillment of Thurston's vision}, Bull of the Amer. Math. Soc., {\bf 51} (1) (2014), 53-70.

\bibitem{BG} O. Bogopolski, F.J. Grunewald, {\it On subgroup conjugacy separabiltiy in the class of virtually free groups}, Max-Planck-Institute of Mathematics Preprint Series, n.~110 (2010), 18~pages.

\bibitem{Br} M.R. Bridson, A. Haeffliger, {\it Metric spaces of non-positive curvature}, Springer, 1999.



\bibitem{CZ2} S.C. Chagas, P.A. Zalesskii, {\it Limit groups are conjugacy separable}, Internat. J. Algebra Comput
{\bf 17} (4) (2007), 851-857.

\bibitem{CZ3} S.C. Chagas, P.A. Zalesskii, {\it Limit groups are subgroup conjugacy separable}, Preprint, 2015.
http://arxiv.org/pdf/1511.04607.pdf


\bibitem{Friedl_2} S. Friedl, {\it An introduction to 3-manifolds and their fundamental groups},
Preprint.

\bibitem{Ghys} E. Ghys and P. de la Harpe, {\it Sur le  Groupes Hyperboliques d'apr$\grave{e}$s Michael Gromov}, Progress in Mathematics, {\bf 83},  Birkhauser, 1990.

\bibitem{Gitik_1} R. Gitik, {\it On quasiconvex subgroups of negatively curved groups},
J. of Pure and Appl. Algebra, {\bf 119} (2) (1997), 155-169.


\bibitem{GMRS} R. Gitik, M. Mitra, E. Rips, M. Sageev, {\it Widths of subgroups}, Trans. Amer. Math. Soc.,
{\bf 350} (1998), 321--329.

\bibitem{Greendlinger} M.D. Greendlinger,
{\it Dehn's algorithm for the word problem}, Comm Pure Appl. Math.,
{\bf 13}, 67-83.

\bibitem{Gromov} M. Gromov, {\it Hyperbolic groups}, Essays in group theory, ed. S.M.~Gersten, Math. Sci. Res. Inst. Publ., Vol. {\bf 8}, Springer, 1987, pp. 75-263.


\bibitem{GS} F.J. Grunewald and D. Segal, {\it Conjugacy in polycyclic groups}, Comm. Algebra {\bf 6} (1978), 775-798.

\bibitem{HW1} F. Haglund, D.T. Wise, {\it Coxeter groups are virtually special}, Advances in Mathematics, {\bf 224} (2010), 1890-1903.

\bibitem{HW2} F. Haglund, D.T. Wise, {\it Special cube complexes}, Geom. Funct. Anal., {\bf 17} (5) (2008),
1551-1620.

\bibitem{Hall} M. Hall, {\it Coset representations in free groups} Trans. Amer. Math. Soc., {\bf 67} (1949), 421--432.

\bibitem{HWZ} E. Hamilton, H. Wilton, P.A. Zalesskii, {\it Separability of double cosets and conjugacy classes in
3-manifold groups}, J. London Math. Soc. {\it 2} {\bf 87} (2013), no. 1, 269--288.

\bibitem{HW} C.C. Hruska, D.T. Wise, {\it Towers, ladders and B.B. Neumann spelling theorem},
 J. Austral. Math. Soc. {\bf 71} (no.1) (2001), 53–-69.

\bibitem{KM} O. Kharlampovich, A. Myasnikov, {\it Hyperbolic groups and free constructions}, Trans. Amer. Math. Soc.,
{\bf 350} (2) (1998), 571-613.

\bibitem{LR} D.D. Long, A.W. Reid, {\it Subgroup separability and local retractions of groups}, Topology {\bf 47} (2008), 137--159.

\bibitem{LS} R. Lyndon and P. Schupp, Combinatorial group theory, Springer-Verlag, 1977.


\bibitem{Mih} K. A. Mihailova, {\it The occurrence problem for direct products of groups}, Dokl. Acad. Nauk SSRR
\textbf{119} (1958), 1103-1105.

\bibitem{MT} M.L. Mihalik, W. Towle, {\it Quasiconvex subgroups of negatively curved groups}, J. Pure Appl. Algebra,
{\bf 95} (1994), 297-301.

\bibitem{Miller} Miller C.F. III, {\it On group-theoretic decision problems and their classification},
Ann. Math. Studies, {\bf 68} (1971), Princeton Univ. Press.

\bibitem{M1} A.Minasyan, {\it Hereditary conjugacy separability of right angled Artin groups and its applications},
Groups, Geometry and Dynamics, {\bf 6} (2) (2012), 335-388

\bibitem{MZ} A. Minasyan, P. Zalesskii, {\it One-relator groups with torsion are conjugacy separable},
Journal of Algebra, {\bf 382} (2013), 39--45.

\bibitem{MZ1} A. Minasyan, P. Zalesskii, {\it Virtually compact hyperbolic groups are conjugacy separable}, (2015)
http://arxiv.org/abs/1504.00613



\bibitem{BN1} B.H. Neumann, {\it Groups covered by permutable subsets.} J. London Math Soc., {\bf 29} (1954), 236-248.

\bibitem{NR} G.A. Niblo, L.D. Reeves, {\it Coxeter groups act on CAT(0) cube complexes},
J. Group Theory, {\bf 6} (3) (2003), 399-413.


\bibitem{Olsh} A. Yu. Ol'shanskii, {\it On residualing homomorphisms and $C$-subgroups of hyperbolic groups},
Inern. J. Algebra and Comput., {\bf 3} (4) (1993), 365-409.

\bibitem{Olsh_1} A. Yu. Ol'shanskii, {\it Periodic quotients of hyperbolic groups}, Mat. Zb., {\bf 184} (4) (1991), 543-567 (in Russian), Engl. translation in {\sl Math. USSR Sb.} {\bf 72} (2) (1992).


\bibitem{ZR} L. Ribes, P. Zalesskii, {\it Conjugacy distinguished subgroups}, Preprint (2015). Available from http://arxiv.org/pdf/1504.02982v2.pdf

\bibitem{Rips} E. Rips, {\it Subgroups of small cancellation groups}, Bull. London Math. Soc., {\bf 14} (1982), 45-47.

\bibitem{Sageev} M. Sageev, {\it Ends of groups pairs and non-positively curved cube complexes}, Proc. London Math. Soc., {\bf 71} (1995), 585-617.

\bibitem{Scott} P. Scott, {\it Subgroups of surface groups are almost geometric}, J. London Math. Soc.,
{\bf 17} (3) (1978), 555-565. See also ibid Correction: J. London Math. Soc., {\bf 32} (2) (1985), 217--220.

\bibitem{Segal} D. Segal, {\it Polycyclic groups}, Cambridge University Press, 1983.

\bibitem{Segal2} D. Segal, {\it Decidable properties of polycyclic groups}, Proc. London Math. Soc., {\bf 61} (3) (1990), 497-528.


\bibitem{Touikan} N. Touikan, {\it A fast algorithm for Stallings' folding process},
Intern. J. of Algebra and Computation, {\bf 16} (6) (2006), 1031-1045.

\bibitem{W} H. Wilton, {\it Hall's theorem for limit groups}, GAFA, v. 18 (2008), 271-303.

\bibitem{Wise1} D. T. Wise, {\it Cubulating small cancellation groups}, Geom. Func.Anal., {\bf 14} (1)
(2004), 150--214.

\bibitem{Wise2} D. T. Wise, {\it The structure of groups with a quasiconvex hierarchy.} Preprint (2011).
Available from http://www. math. mcgill.ca/wise/papers.html.


\end{thebibliography}
\end{document}